\documentclass{ws-m3as}

\usepackage{comment}

\usepackage{algorithm}
\usepackage{algorithmicx}
\usepackage{algpseudocode}

\usepackage{subcaption}
\usepackage{float}   
\usepackage{caption} 

\usepackage{xcolor}

\date{}

\begin{document}

\markboth{Jingyuan Hu, Zhongjian Wang, Jack Xin, and Zhiwen Zhang}{Particle-Field Algorithm for RDA Cancer Cell Invasion}

\title{A Novel Stochastic Particle-Field Algorithm for a Reaction-Diffusion-Advection Cancer Invasion Model}

\author{Jingyuan Hu}

\address{Department of Mathematics, The University of Hong Kong, Pokfulam Road\\
Hong Kong SAR,
P.R.China\\
hujy@connect.hku.hk}

\author{Zhongjian Wang}

\address{Division of Mathematical Sciences, School of Physical and Mathematical Sciences, \\Nanyang Technological University, 21 Nanyang Link\\
Singapore 637371, Singapore\\ zhongjian.wang@ntu.edu.sg}

\author{Jack Xin}

\address{Department of Mathematics, University of California at Irvine, Irvine
\\CA 92697, USA\\
jack.xin@uci.edu}

\author{Zhiwen Zhang
\footnote{Corresponding author.}}
\address{Department of Mathematics, The University of Hong Kong, Pokfulam
Road
\\Hong Kong SAR, P.R.China
\\Materials Innovation Institute for Life Sciences and Energy
(MILES), HKU-SIRI
\\Shenzhen, 518045, P.R. China.
\\zhangzw@hku.hk}

\maketitle

\begin{abstract}
In this paper, we present a novel numerical framework for solving a specific biological reaction-diffusion-advection (RDA) system of cancer growth in three dimensions (3D) using particles of variable mass. We adopt empirical particle measures to represent cell density and dynamically construct the concentration fields of multiple related chemical species throughout the 3D domain. Efficient interaction between the particles and the spatial grid is achieved through a Particle-in-Cell (PIC) algorithm, while diffusion in space is solved rapidly using a spectral method. We demonstrate that for this particular system, the rate of change of particle mass remains bounded over finite time intervals. Furthermore, in addition to the inherent positivity preservation of cell density guaranteed by the empirical particle measures, the concentrations constructed by the algorithm are also unconditionally positivity-preserving on the spatial grid. Moreover, we present a rigorous error analysis for the proposed method, and numerical experiments confirm the theoretical convergence rates. To the best of our knowledge, this is the first numerical work to solve this system in three dimensions, wherein a rapid spread of cells driven by haptotactic flux is observed, similar to the behavior documented in the two-dimensional case.
\end{abstract}

\keywords{Reaction-diffusion-advection (RDA) system, stochastic interacting particle-field (SIPF) algorithm, particle-in-cell (PIC), three-dimensional (3D) simulations, convergence analysis.}

\ccode{AMS Subject Classification: 35K55,  65M12,  65M70,   65M75,  65Y20}

\section{Introduction}
Reaction-diffusion-advection (RDA) systems have played a central role in modeling and simulating cancer cell invasion into healthy tissues over the past decades. Besides being flexible to account for various biophysical mechanisms, RDA systems can generate synthetic data for theoretical understanding and therapeutic intervention in the absence of wet-lab experimental data. While mesh-based methods (e.g, finite difference\cite{chertock2008second,Wise_2008}) have been a classical choice for computation, they can be expensive when solutions contain large gradients to be resolved, especially in three space dimensions, and may also complicate coupling with agent-based discrete dynamics for individual cell behavior\cite{Cont-Agent-Model-1998,hybrid_2017}. 

In this paper, we develop a novel stochastic interacting particle-field (SIPF) method for RDA cancer growth models. Specifically, we work with the following system studied in\cite {chertock2008second,anderson2005hybrid,cancer_2006}. The system couples four dependent variables: $u(\mathbf{x},t)$ denotes the cell density, $v(\mathbf{x},t)$ represents the density of the extracellular matrix (ECM), $m(\mathbf{x},t)$ is the concentration of the matrix-degrading enzyme (MDE), and $w(\mathbf{x},t)$ stands for the oxygen concentration, in a bounded spatial domain $\mathbf{x} \in \Omega \subset \mathbb{R}^d$. The spatio-temporal dynamics are governed by:
\begin{equation}\label{eq:haptotaxis-system}
    \begin{cases}
        u_t + \chi \nabla \cdot (u \nabla v) = D_u \Delta u - u + \rho(w) u, \\
        v_t = - \alpha m v, \\
        m_t = D_m \Delta m - \beta m + u, \\
        w_t = D_w \Delta w - \eta(u) w + \gamma v - w,\quad \mathbf{x} \in \Omega,\ t \in [0,T]
    \end{cases}
\end{equation}
where $\chi, D_u, D_m, D_w, \alpha, \beta, \gamma$ are positive constants. In the equation of \(u\), the advection term $\chi \nabla \cdot (u \nabla v)$ models cell movement directed along gradients of the concentration of the ECM $\nabla v$, the diffusion term \(D_u \Delta u\) models random movements, the decay term \(-u\) models cell death, and the proliferation function $\rho(w):=\frac{2w}{1+w}$ models cell proliferation in response to oxygen availability, where a nonlinear saturation effect occurs at large $w$. In the equation of \(v\), the decay term \(-\alpha m v\) models the degradation effect proportional to the concentration of MDE \(m\). In the equation of \(m\), \(D_m \Delta m -\beta m\) models the diffusion and decay of MDE concentration, and \(u\) models the production from cells. In the equation of \(w\), \(D_w \Delta w - w\) models the diffusion and decay of oxygen concentration, \(\gamma v\) models the production from ECM, and the oxygen consumption function $\eta(u):=\frac{2u}{1+u}$ describes another nonlinear saturation effects at high cell density \(u\). 

In the $u$ equation, the growth $\rho(w)\, u$ and decay ($-u$) terms, arising from cell (death) birth with (in)sufficient oxygen, lead to a lack of conservation of total mass of $u$ and new challenges to the particle approximation of $u$. In the absence of growth/decay
(low order) terms in the $u$ equation\cite{anderson2005hybrid} and without the $w$ component, the simplified system resembles the parabolic-parabolic Keller-Segel chemotaxis system\cite{SIPF1,hu2025fast} and an SIPF method has been developed in\cite{Hu2024}.
In this case, $u$ is regarded as particle density and represented in physical space by moving particles, while the other components are approximated in frequency space by Fourier series expansions. Our main objective here is to establish a new numerical method based on the original SIPF-PIC approach to the more complex system \eqref{eq:haptotaxis-system} by building reaction terms in the evolution of particle density.

To motivate our method, we first review mathematical models and numerical solutions for chemotaxis, a biological phenomenon concerning the movement of organisms (e.g., bacteria) in response to signals, typically chemical substances known as chemo-attractants, which can be produced by the organisms themselves. The Keller-Segel (KS) system\cite{KS} describes chemotaxis-driven aggregation processes, modeling the dynamics of organisms in response to the gradients of the chemo-attractants. The KS system demonstrates interdisciplinary utility, with applications spanning biological, ecological, and medical domains. The classical KS system on a bounded spatial domain $\Omega \subset \mathbb{R}^d$ can be written as:
\begin{equation}
\left\{
\begin{aligned}
\partial_t u &= \nabla \cdot \left( \mu \, \nabla u - \chi u \, \nabla \, v \right), \\
\epsilon \, \partial_t v &= \Delta v - k^2 \, v + u, \quad \mathbf{x} \in \Omega,\ t \in [0,T],
\end{aligned}
\right.
\label{eq:para_system}
\end{equation}
where $u(\mathbf{x},t)$ is the density of the particles, $v(\mathbf{x},t)$ is the concentration of the chemoattractant, $\chi, \mu >0$ are the chemotactic sensitivity and diffusivity, and $\epsilon, k\geq0$ are the relaxation and
degradation rates. The model is called elliptic if
$\epsilon = 0$ (when $c$ evolves rapidly to a local equilibrium), and parabolic if $\epsilon > 0$.

Numerical methods are commonly employed to study the solutions of the KS system, among which mesh-based methods are the most widely used approaches. Chertock et al.\cite{pos_preserving} proposed a high-order hybrid finite volume and finite difference scheme with positivity-preserving properties for two-dimensional (2D) problems. Shen et al.\cite{shen2020unconditionally} proposed an energy dissipation and bound-preserving scheme that is not restricted to specific spatial discretizations. Chen et al.\cite{chen2022error} developed a fully-discrete finite element scheme for the 2D parabolic-elliptic case, and demonstrated that the scheme exhibits finite-time blow-up under assumptions analogous to those of the continuous case. Other works include a positivity-preserving and asymptotic-preserving semi-discrete scheme\cite{LiuJian-Guo2018Paap}, and a saturation concentration setting\cite{hillen2001global} which prevents blow-up and yields spiky solutions. Beyond these 2D cases, Gnanasekaran et al.\cite{3Dblowup} simulated a 3D blow-up solution in a cube with a finite difference scheme in a short time.

In addition to the Eulerian discretization methods mentioned above, there have been notable advancements in the Lagrangian framework for the KS system \eqref{eq:para_system} and related equations. The parabolic-elliptic case is comparatively simpler, characterized by a memoryless particle interaction system\cite{velazquez2004point} with singularities arising when a pair of particles approach closely. Several particle methods seek to circumvent the singularity; Havskovéč and Ševčovič\cite{havskovec2009stochastic} employed a regularized interaction potential, and Liu et al.\cite{liu2017random} developed a random particle blob method with a mollified kernel. Convergence studies for these approaches have been conducted in\cite{havskovec2011convergence,mischler2013kac} and\cite{liu2019propagation}, with mean-field equations serving as a key tool\cite{mischler2013kac}. Other works include the computation of 2D advective parabolic-elliptic KS systems with passive flow\cite{khan2015global}, and the reduction of computational costs via the random batch method for particle interactions\cite{random_batch}.

Existing Lagrangian frameworks for parabolic-elliptic cases cannot be directly extended to fully parabolic systems, as the chemo-attractant concentration no longer achieves rapid local equilibration and the drift terms depend on historical particle positions\cite{chen2022mean,fournier2023particle}. Stevens\cite{stevens2000} developed an $N$-particle system for the fully parabolic case, representing the chemo-attractant concentration with particles subject to probabilistic creation and annihilation. However, this method also suffers from the accumulation of per-step complexity over time. To address this limitation, Wang et al.\cite{SIPF1,hu2025convergence} developed a stochastic interacting particle-field (SIPF) method that computes the chemo-attractant concentration through spectral decomposition and maintains only the current-step particle positions, which ensures that the per-step complexity remains constant over time. Hu et al\cite{hu2025fast} employed the Particle-in-Cell (PIC) technique to reduce the complexity of the SIPF method from $O(PH^d)$ to $O(P+H^d \log H)$, where \(P\) is the particle count and \(H\) is the number of Fourier modes in each dimension. We also refer the interested reader to the interacting particle method (IPM) developed by us for computing the principal eigenvalue of 3D or high-dimensional elliptic non-self-adjoint operators. The IPM can be used to compute KPP front speeds of RDA systems in chaotic and random flows\cite{lyu2022convergent,zhang2025convergent} and large deviation rate functions of entropy production for diffusion processes\cite{wu2025computing}, which demonstrates the advantages of the Lagrangian framework for solving complex PDE problems.

The algorithm to be developed in this paper for the RDA system (\ref{eq:haptotaxis-system}) is motivated by the ideas in\cite{hu2025fast,hu2025stochastic} that work efficiently in three space dimensions in computing nearly singular solutions. In system \eqref{eq:para_system}, only advection and diffusion are present, so particles of fixed mass can be used to represent and update cell density \(u\). In this paper, we extend the SIPF-PIC method to the case with reactions, handling the reaction term through variations in particle mass while maintaining the original $O\left(P + H^3 \log H\right)$ computational complexity per timestep, and refer to this extension as the SIPF-W method (Stochastic Interacting Particle-Field method with Weighted particles).

In addition to the inherent positivity preservation of cell density afforded by the empirical particle measures, the concentrations of chemical species reconstructed on the spatial grid are also unconditionally positivity-preserving, ensuring the physical realizability of the numerical solution at every stage of the computation. For a finite time \(T\), the rate of change of particle weights used to solve system \eqref{eq:haptotaxis-system} is bounded (see Lemma~\ref{lemma:mass}). Consequently, the ratio between the maximum and minimum particle weights remains controllable, allowing us to conduct a systematic and in-depth study on the error analysis for the proposed SIPF-W method.

The convergence of our method is theoretically guaranteed by the analysis in Section~\ref{sec:error}. Our main result, presented in Theorem \ref{thm:overall}, shows the convergence of the numerical solution \(\hat{u}\) obtained by the SIPF-W method to the reference solution \(u\) under certain non-blow-up assumptions (see Assumption \ref{as}) and regularization (see \eqref{eq:m_reg}). Specifically, while the particle count is large enough \((P \geq H^\frac{40}{13})\) and the computational time $T$ is fixed, the error in the particle distribution measured by the Wasserstein distance \( \mathcal{W}_1 \) scales as \(O\left(\tau + H^{-\frac{16}{13}}\right)\), where \(\tau\) is the timestep. For long-term simulations, however, the mass ratio becomes uncontrolled, and weight degeneracy may occur. We propose a residual resampling algorithm (Alg.~\ref{alg:res_res}) to restore the particle masses to the average value.

To complement the theoretical analysis, we also present numerical experiments to verify the convergence of SIPF-W. We follow the benchmark example described in\cite{walker2007global} and reproduce the particle patterns described therein. Since the system does not have an explicit solution, we verify that the numerical results obtained by our method are very close to those obtained by the mesh-based method mentioned in\cite{chertock2008second}. To the best of our knowledge, the numerical computation for the 3D problem has not been addressed in an earlier work. We extend the above benchmark example to 3D and verify the theoretical convergence order shown in Theorem \ref{thm:overall}.

The rest of the paper is organized as follows. Section \ref{subsec:SIPF_PIC} introduces the SIPF-W method, a hybrid particle-field method derived through bidirectional particle-grid coupling. Section \ref{subsec:PIC} introduces the particle-to-grid discretization and grid-to-particle interpolation operations required by the algorithm, details the algorithmic realization of the proposed method, and examines the computational cost. Section \ref{subsec:Properties} further derives structural properties, such as the interpolation error of PIC-related operations, an upper bound on the mass evolution rate, and the guaranteed positivity of the numerical concentration. Section \ref{sec:error} rigorously establishes a comprehensive error analysis framework that quantifies errors from various sources in the SIPF-W method. In Section \ref{sec:Num}, we first present a 2D numerical example (Section \ref{subsec:2D}) to validate the accuracy of our method by comparing with an established mesh-based algorithm, and then extend to a 3D case (Section \ref{subsec:3D}), where no prior work exists for direct comparison, to verify the theoretical convergence rate stated in Theorem \ref{thm:overall}. Finally, a summary of our findings and future research directions is given in Section \ref{sec:conclusion}.

\section{The Particle-Field Algorithm}
\label{sec:num_method}
In this section, we first introduce the Stochastic Interacting Particle Field method with particle-in-cell acceleration (SIPF-PIC), developed in\cite{SIPF1,hu2025convergence,hu2025fast}, which is a hybrid particle-spectral approach that uses particle-in-cell operations to ensure localized particle-frequency interactions. In this paper, we handle the reaction term using particles with varying mass instead of equal mass, and the algorithm is referred to as the SIPF-W method (Stochastic Interacting Particle-Field method with Weighted particles) in this case. Then, we introduce some algorithmic details and properties of the proposed SIPF-W method.

\subsection{SIPF-PIC Method}
\label{subsec:SIPF_PIC}
In this article, we assume the spatial dimension \(d=3\) by default. Throughout this section, the system is restricted to a domain \( \Omega = \left[-\frac{L}{2}, \frac{L}{2}\right]^3 \) with periodic boundary conditions.

In the SIPF-PIC method for solving system \eqref{eq:haptotaxis-system}, the diffusive-reactive chemical concentration \(\mathbf{g}(\mathbf{x},t)=(v(\mathbf{x},t);m(\mathbf{x},t);w(\mathbf{x},t))\) is represented in frequency space using Fourier series expansions, while the particle density \( u(\mathbf{x}, t) \) is represented in physical space by some moving particles. The temporal domain \([0, T]\) is partitioned into $N_T = \left\lceil \frac{T}{\tau} \right\rceil$ equal intervals of size \( \tau \), where the ceiling function $\lceil x \rceil$ denotes the smallest integer greater than or equal to $x$.

The frequency representation is used to enable fast convolution algorithms in the diffusion step. By the invertibility of the FFT, representing the concentration on an $H \times H \times H$ frequency grid is equivalent to working on a physical grid of the same size. At each time step \(t_n = n\tau \), the concentration is represented as:
\begin{equation}
\label{eq:vmw_hat}
\hat{\mathbf{g}}^{(n)}(\mathbf{x}):=\hat{\mathbf{g}}(\mathbf{x}, n\tau) = \sum_{\mathbf{q} \in U_H} \hat{\mathbf{k}}_{\mathbf{q}}^{(n)} e^{i \frac{2\pi}{L} \mathbf{q} \cdot \mathbf{x}},
\end{equation}
where \(\hat{\mathbf{k}}_{\mathbf{q}}^{(n)}=(\hat{k}_{v\mathbf{q}}^{(n)};\hat{k}_{m\mathbf{q}}^{(n)};\hat{k}_{w\mathbf{q}}^{(n)})\) are the time-dependent Fourier coefficients for \(\hat{\mathbf{g}}^{(n)}=(\hat{v}^{(n)};\hat{m}^{(n)};\hat{w}^{(n)})\), 
and the frequency index set \(U_H := \left\{-\frac{H}{2}, \ldots, \frac{H}{2}\right\}^3\). The particle density is represented as:
\begin{equation}
\label{eq:u_hat}
    \hat{u}^{(n)}(\mathbf{x}) = \sum_{p=1}^P \hat{a}_p^{(n)} \delta\left(\mathbf{x} - \widehat{\mathbf{X}}_p^{(n)}\right),
\end{equation}
where \(\delta(\cdot)\) is the Dirac delta function, \((\hat{a}_p^{(n)},\widehat{\mathbf{X}}_p^{(n)})\) represents the weight and the position of the $p$-th particle at time $t_n$.
\subsubsection{Updating chemical concentration \(\hat{\mathbf{g}}=(\hat{v};\hat{m};\hat{w})\)} 
The \((v,m,w)\)-equation in \eqref{eq:haptotaxis-system} can be written in the compact form:
\begin{equation}
\label{eq:g_t}
\mathbf{g}_t = \mathbf{D}\Delta \mathbf{g} - \mathbf{R} \mathbf{g} + \mathbf{S}(u,\mathbf{g}),
\end{equation}
where the diffusion coefficients and component-wise linear consumption rates \(\mathbf{D} =\\ \text{diag}(0,D_m,D_w),\mathbf{R} = \text{diag}(0,\beta,1)\) are diagonal matrices, the source term\\ \(\mathbf{S}(u,\mathbf{g})=(-\alpha m v;u;\gamma v - \eta (u) w)\).

Using the operator splitting method, we obtain a first-order  approximation for the time step $\tau$, where the reaction and diffusion steps are treated sequentially:
\begin{equation}
\label{eq:g_hat_phy}
\begin{aligned}
\hat{\mathbf{g}}^{(n+1)}&=\mathcal{D}_\tau (\mathcal{R}_\tau (\hat{\mathbf{g}}^{(n)},\hat{u}^{(n)}_H)),\\
\text{where} \quad \mathcal{D}_\tau(\mathbf{g})&= \mathbf{g} \ast \mathbf{G}_\tau, \quad \mathcal{R}_\tau (\mathbf{g},u) = \mathbf{g} + \tau \mathbf{S}(u,\mathbf{g}).
\end{aligned}
\end{equation}
The kernel of diffusion and linear consumption is defined as $\mathbf{G}_\tau(\mathbf{x}):=(\delta(\mathbf{x});\\(4\pi D_m \tau)^{-\frac{3}{2}}\cdot e^{-\frac{|\mathbf{x}|^2}{2D_m \tau}-\beta \tau};(4\pi D_w \tau)^{-\frac{3}{2}}\cdot e^{-\frac{|\mathbf{x}|^2}{2D_w \tau}-\tau})$, where $\delta(\mathbf{x})$ denotes the Dirac delta function at the origin, corresponding to the fact that the component $v$ does not undergo diffusion or linear consumption.

Applying the Fourier transform to \eqref{eq:g_hat_phy} yields:
\begin{equation}
\label{eq:k_q_update}
\hat{\mathbf{k}}_{\mathbf{q}}^{(n)} = \widehat{\mathbf{K}}_{\tau, \mathbf{q}}\left(\hat{\mathbf{k}}_{\mathbf{q}}^{(n-1)}+\tau\mathcal{F}_H[\mathbf{S}(\hat{u}^{(n-1)},\hat{\mathbf{g}}^{(n-1)})]_{\mathbf{q}} \right),
\end{equation}
where the Fourier operator \(\mathcal{F}[f]_\mathbf{q} = L^{-3} \int_{\mathbf{x}\in \Omega}e^{-\frac{2 \pi i}{L}\mathbf{q}\cdot \mathbf{x}}f(\mathbf{x})\mathrm{d}\mathbf{x}\), the theoretical kernel of diffusion and linear consumption in the frequency grid is defined as \(\mathbf{K}_{\tau,\mathbf{q}} := \\\exp\left(-\frac{4\pi^2|\mathbf{q}|^2}{L^2}\mathbf{D}\tau-\mathbf{R}\tau\right)\) with the exponential function applied component-wise.

To ensure the positivity-preserving property of the algorithm in the physical domain, we can construct the kernel \(\widehat{\mathbf{G}}_\tau\) directly on the physical grid and transform it to the frequency domain via the Discrete Fourier Transform (DFT) to obtain \(\widehat{\mathbf{K}}_\tau\). When the grid spacing $L/H$ tends to zero faster than the step size $\tau$ (i.e., $H^{-\psi}=o(\tau)$ for some \(\psi < 2\)), we can construct a kernel $\widehat{\mathbf{K}}_\tau$ whose inverse DFT yields a positivity-preserving quantity on the physical grid, and this kernel is exponentially close to the theoretical kernel $\mathbf{K}_\tau$, see Lemma~\ref{lemma:kernel}.

The nonlinear reaction term \(\mathbf{S}(\cdot,\cdot)\) is evaluated explicitly to avoid solving nonlinear systems. The particle-in-cell Fourier operator \(\mathcal{F}_H\) with spatial resolution \(H\) is claimed in \eqref{eq:def_fh}, which reduces the cost of particle-frequency interaction from \(O(P H^3)\) to \(O(P+H^3 \log H)\)\cite{hu2025fast}. Since \(\mathbf{D},\mathbf{R}\) are diagonal matrices, the update is performed component-wise.

\subsubsection{Updating particle density \(u\)}
The governing equation for \(u\) in \eqref{eq:haptotaxis-system} corresponds to the following SDE with an evolving weight \(a(t)\), where \(\mathbf{W}_{t} = \sqrt{2 D_u} \mathbf{B}_{t}\) and \(\mathbf{B}_{t}\) is a standard Brownian motion:
\begin{equation}
\label{eq:SDE}
\begin{cases}
\mathrm{d}\mathbf{X} = \chi \nabla v (\mathbf{X},t)\mathrm{d}t + \mathrm{d}\mathbf{W}_{t},\\
\mathrm{d}(\log a) = \rho(w(\mathbf{X},t))-1.
\end{cases}
\end{equation}
We apply the Euler-Maruyama scheme:
\begin{equation}
\begin{cases}
\widehat{\mathbf{X}}_p^{(n)} = \widehat{\mathbf{X}}_p^{(n-1)} + \chi \tau \nabla \hat{v}_H^{(n-1)}(\widehat{\mathbf{X}}_p^{(n-1)}) + \mathbf{W}_p^{(n)},\\
\hat{a}_p^{(n)}=\hat{a}_p^{(n-1)}(1+\tau \rho(\hat{w}_H^{(n-1)}(\widehat{\mathbf{X}}_p^{(n-1)})) - \tau),
\end{cases}
\label{eq:u_hat_update}
\end{equation}
where \(\mathbf{W}_p^{(n)} \sim \mathcal{N}(0,\, 2D_u\tau I_3) \) are independent 3D Wiener increments, \( I_3 \) is the \( 3 \times 3 \) identity matrix, and the drift \(\nabla \hat{v}^{(n-1)}(\widehat{\mathbf{X}}_p^{(n-1)})\) is derived from
\begin{equation}
\label{eq:gradv}
\nabla \hat{v}^{(n-1)}(\widehat{\mathbf{X}}_p^{(n-1)}) = \sum_{\mathbf{q} \in U_H}  \mathbf{q} \frac{2\pi i}{L} \hat{k}^{(n-1)}_{v\mathbf{q}} e^{\frac{2\pi i}{L} \mathbf{q} \cdot \widehat{\mathbf{X}}_p^{(n-1)}}.
\end{equation}
The particle-in-cell interpolated field \(\nabla \hat{v}_H\) and \(\hat{w}_H\) is defined in Section \ref{subsec:PIC}, which also reduces the cost of particle-frequency interaction from \(O(P H^3)\) to \(O(P+H^3 \log H)\)\cite{hu2025fast}.

\subsection{Particle-in-Cell Operations and Algorithmic Details}
\label{subsec:PIC}
In this section, we detail the algorithmic implementation of SIPF-W and analyze its computational complexity. Then we supplement some algorithmic properties, such as the upper bound on the rate of change of particle masses, and the positivity-preserving property of the concentrations maintained by the algorithm.

\subsubsection{Particle-to-Grid Transfer}
Recall that the concentration \(\hat{\mathbf{g}}^{(n)}(\mathbf{x})\) is represented by the frequency parts \(\hat{\mathbf{k}}_\mathbf{q}^{(n)}\); see \eqref{eq:vmw_hat}. The update \eqref{eq:k_q_update} of \(\hat{\mathbf{k}}_\mathbf{q}^{(n)}\) requires the source term \(\mathcal{F}[\mathbf{S}(u,{\mathbf{g}})]_{\mathbf{q}}\) in the frequency grid, which is equivalent to \(\mathbf{S}(u(\mathbf{x}),{\mathbf{g}(\mathbf{x})})\) on the physical grid by applying the Fast Fourier Transform (FFT). Notice that \(\hat{\mathbf{g}}^{(n)}(\mathbf{x})\) can be reconstructed onto the physical grid from \(\hat{\mathbf{k}}_\mathbf{q}^{(n)}\) directly by FFT. However, the particle density \(\hat{u}^{(n)}(\mathbf{x})\) is represented by moving particles \eqref{eq:u_hat}, and we need to transfer \(\hat{u}^{(n)}(\mathbf{x})\) onto the physical grid.

A conventional particle-in-cell (PIC) method\cite{alma99} estimates the density function at grid points \(\overline{u}^{(n)}\left(\mathbf{q}\frac{L}{H}\right)\) as a regularized Monte Carlo quadrature of the particle distribution \(\hat{u}^{(n)}(\mathbf{x})\):
\begin{equation}
\label{eq:def_u_overline}
\overline{u}^{(n)}\left(\mathbf{q}\frac{L}{H}\right) = \int_{\mathbf{x} \in \Omega} \hat{u}^{(n)}(\mathbf{x}) R_h\left(\mathbf{x},\mathbf{q}h\right) \mathrm{d}\mathbf{x} = \sum^P_{p=1} \hat{a}_p\hat{u}^{(n)}(\mathbf{X}_p) R_h\left(\mathbf{X}_p,\mathbf{q}h\right),
\end{equation}
where \(R_h\left(\mathbf{x},\mathbf{q}h\right)\) is the assignment function with support scale \(h=\frac{L}{H}\), satisfying the partition of unity \(\int_{\mathbf{x} \in \Omega} R_h\left(\mathbf{x},\mathbf{q}h\right) \mathrm{d}\mathbf{x}=1\) for all \(\mathbf{q} \in U_H\). Furthermore, the assignment function \(R_h\) is locally supported, meaning that each particle \(\mathbf{X}_p\) interacts with only a constant number of grid points.

For example, in a trilinear interpolation with a second-order truncation error, the assignment function is represented as
\begin{equation}
R_h\left(\mathbf{x}, \mathbf{q}h\right) = h^{-3} \prod_{j=1}^3\max\left(1-h^{-1}\left|x_j - q_j h\right|,0\right).
\end{equation}
In particular, $R_h(\mathbf{x},\cdot)$ is nonzero only for the \(2^3=8\) vertices of the grid cell containing the coordinate $\mathbf{x}$.

To achieve higher precision, we also introduce assignment functions with higher-order truncation errors. As proposed by\cite{hu2025fast}, an assignment function with a fourth-order truncation error is constructed as follows, where the distance is measured by grid length on the \(j\)-th axis \(p_j=h^{-1}|x_j - q_j h|\) for \(1\leq j \leq 3\):
\begin{equation}
\begin{aligned}
    &R_h^{(4)}\!\left(\mathbf{x}, \mathbf{q}_h\right) = \\
    &\qquad
    \begin{cases}
        h^{-3} \left(\prod_{j=1}^3 (1-p_j)\right)
        \left(1+\sum_{j=1}^3 \frac{p_j(1-p_j)}{2}\right), 
        & \text{if } p_1,p_2,p_3 \leq 1; \\[2pt]
        -\frac{h^{-3}}{6}(1-p_2)(1-p_3)(p_1-1)(2-p_1)(3-p_1),  
        & \text{if } 1 < p_1 \leq 2 \text{ and } p_2,p_3 \leq 1;\\[2pt]
        -\frac{h^{-3}}{6}(1-p_1)(1-p_3)(p_2-1)(2-p_2)(3-p_2),  
        & \text{if } 1 < p_2 \leq 2 \text{ and } p_1,p_3 \leq 1;\\[2pt]
        -\frac{h^{-3}}{6}(1-p_1)(1-p_2)(p_3-1)(2-p_3)(3-p_3),  
        & \text{if } 1 < p_3 \leq 2 \text{ and } p_1,p_2 \leq 1;\\[2pt]
        0, & \text{other cases}.
    \end{cases}
\end{aligned}
\end{equation}

We transfer the empirical measure \(\hat{u}^{(n)}(\mathbf{x})\) onto the physical grid using the following algorithm (Algorithm \ref{alg:p2g}). The notation \(\Gamma_{p,H}:=\left\{\mathbf{q}\in U_H:R_h(\mathbf{X}_p,\mathbf{q}h)>0\right\}\) denotes the set of indices of the grids interacting with \(\mathbf{X}_p\). To ensure theoretical convergence of the algorithm, we apply a low-pass filter\cite{hu2025fast} in the frequency domain $K_\mathbf{q} = \exp( -2 \pi^2 |\mathbf{q}|^2H_0^{-2})$ for some \(H_0 \leq H\), which is equivalent to convolving the density function with a Gaussian kernel of variance $L^2 H_{0}^{-2}$.
\begin{algorithm}
\caption{Particle-to-Grid Transfer}
\label{alg:p2g}
\begin{algorithmic}[1]
 \State Input coordinates \(\{\mathbf{X}_p\}_{1\leq p \leq P}\), weights \(\{\hat{a}_p\}_{1\leq p \leq P}\)
    \State Initialize \(\overline{u}(\mathbf{q}h) \leftarrow 0\) for all \(\mathbf{q} \in U_H\)
    \For{\(p = 1\) \textbf{to} \(P\)}
        \State Find \(\Gamma_{p,H}\)
        \For{\(\mathbf{q} \in \Gamma_{p,H}\)}
            \State Update contribution: \(\overline{u}(\mathbf{q}h) \leftarrow \overline{u}(\mathbf{q}h) + \hat{a}_p R_h(\mathbf{X}_p,\mathbf{q}h)\)
        \EndFor
    \EndFor
    \State Output \(\overline{u}(\mathbf{q}h)\) for all \(\mathbf{q} \in U_H\)
\end{algorithmic}
\end{algorithm}

The numerical frequency source term required in \eqref{eq:k_q_update} is defined as
\begin{equation}
\label{eq:def_fh}
\mathcal{F}_H[\mathbf{S}(\hat{u}^{(n-1)},\hat{\mathbf{g}}^{(n-1)})]_{\mathbf{q}}:=\mathcal{F}[\mathbf{S}(\overline{u}^{(n-1)},\hat{\mathbf{g}}^{(n-1)})]_{\mathbf{q}} \quad \text{for } \mathbf{q} \in U_H,
\end{equation}
where \(\overline{u}^{(n-1)}\) is defined in \eqref{eq:def_u_overline} and numerically constructed in Algorithm \ref{alg:p2g}.
\subsubsection{Grid-to-Particle Interpolation}
The update \eqref{eq:u_hat_update} of the particle positions and weights \((\mathbf{X}_p^{(n)},\hat{a}_p^{(n)})\) requires the values of \(\nabla \hat{v}^{(n-1)}\) and \(\hat{w}^{(n-1)}\) at \(\mathbf{X}_p^{(n-1)}\). Since the fields \(\nabla \hat{v}^{(n-1)}\) and \(\hat{w}^{(n-1)}\) are constructed only at grid points, we need to interpolate them at an arbitrary position \(\mathbf{X} \in \Omega\).

Consider a smooth target function \( f: \mathbb{R}^3 \to \mathbb{R} \). The PIC method\cite{alma99} represents \(f(\mathbf{X})\) as a combination of the function values at grid points:
\begin{equation}
\label{eq:g2p}
\overline{f}(\mathbf{X}) = \sum_{\mathbf{q} \in U_H} K_h\left(\mathbf{X}, \mathbf{q}h\right)f\left(\mathbf{q}h\right),
\end{equation}
where the particle-grid interaction kernel $K_h\left(\mathbf{X}, \mathbf{q}h\right)$ satisfies \(\sum_{\mathbf{q}\in U_H}K\left(\mathbf{X}, \mathbf{q}h\right)=1\) for every $\mathbf{X} \in \Omega$.

The interaction kernel \(K_h\) is also locally supported. Furthermore, \(K_h\) corresponds to the assignment function \(R_h\) in \eqref{eq:def_u_overline} with the relationship \(K_h = h^3 R_h\), and they share the same order of truncation error.

We interpolate a function \(f\) at some arbitrary positions \(\mathbf{X}_1,...,\mathbf{X}_p \in \Omega\) using the following algorithm (Algorithm \ref{alg:g2p}). Since the interaction kernel \(K_h\) is equivalent to the assignment function \(R_h\), \(K_h\) has the same support set \(\Gamma_{p,H}\) for the \(p\)-th particle.
\begin{algorithm}
\caption{Grid-to-Particle Interpolation}
\label{alg:g2p}
\begin{algorithmic}[1]
    \State Input \(f(\mathbf{q}h)\) defined on grid indices \(\mathbf{q} \in U_H\), coordinates \(\{\mathbf{X}_p\}_{1\leq p \leq P}\)
    \For{\(p = 1\) \textbf{to} \(P\)}
        \State Initialize : \(f_p \leftarrow 0\)
        \State Find \(\Gamma_{p,H}\)
        \For{\(\mathbf{q} \in \Gamma_{p,H}\)} 
            \State Accumulate contribution: \(f_p \leftarrow f_p + K_h\left(\mathbf{X}_p, \mathbf{q}h\right)f\left(\mathbf{q}h\right)\)
        \EndFor
        \State Output interpolated value \(\overline{f}(\mathbf{X}_p) = f_p\)
    \EndFor   
\end{algorithmic}
\end{algorithm}

The interpolated values required in \eqref{eq:u_hat_update} are defined as
\begin{equation}
\label{eq:g2ph}
\nabla \hat{v}_H(\mathbf{X}) = \sum_{\mathbf{q} \in U_H} K_h\left(\mathbf{X}, \mathbf{q}h\right)\nabla \hat{v}\left(\mathbf{q}h\right), \hat{w}_H(\mathbf{X}) = \sum_{\mathbf{q} \in U_H} K_h\left(\mathbf{X}, \mathbf{q}h\right)\hat{w}\left(\mathbf{q}h\right),
\end{equation}
where \(h=\frac{L}{H}\) is the grid length.

\subsubsection{Overall Algorithm and Complexity}
We summarize the SIPF-W method as the following Algorithm \ref{alg:sipf-pic}. The complexity of the algorithm is stated in the following lemma.

\begin{algorithm}
\begin{algorithmic}[1]
\caption{SIPF-W method}
\label{alg:sipf-pic}
\Statex \textbf{Initialization:}
\State Sample i.i.d \(\{\widehat{\mathbf{X}}_p^{(0)}\}_{1\leq p \leq P}\sim u(\cdot,0)\)
\State Compute \(\hat{\mathbf{k}}_\mathbf{q}^{(0)}=\mathcal{F}[\mathbf{g}(\cdot,0)]_\mathbf{q}\)
\Statex
\For {$n = 1, 2, \dots,N_T$}
\Statex \textbf{Update chemical concentration} \(\mathbf{g}\):
\State Compute \(\overline{u}^{(n-1)}(\mathbf{q}h)\) at grid points by Alg. \ref{alg:p2g}
\State Compute \(\mathcal{F}_H[\mathbf{S}(\hat{u}^{(n-1)},\hat{\mathbf{g}}^{(n-1)})]_{\mathbf{q}}\) via FFT
\State Update frequency components:\\ \quad \quad
\(\hat{\mathbf{k}}_{\mathbf{q}}^{(n)} \leftarrow \widehat{\mathbf{K}}_\tau \left(\hat{\mathbf{k}}_{\mathbf{q}}^{(n-1)}+\tau\mathcal{F}_H[\mathbf{S}(\hat{u}^{(n-1)},\hat{\mathbf{g}}^{(n-1)})]_{\mathbf{q}} \right)\)
\State Apply low-pass filter: \(\hat{\mathbf{k}}_{\mathbf{q}}^{(n)} \leftarrow \hat{\mathbf{k}}_{\mathbf{q}}^{(n)}\exp( -2 \pi^2 |\mathbf{q}|^2H_0^{-2})\)
\Statex \textbf{Update particle density} \(u\):
\State Compute \(\nabla \hat{v}(\mathbf{q}h), \hat{w}(\mathbf{q}h)\) at grid points via FFT
\For{$p=1$ to $P$}
\State Compute \(\nabla \hat{v}_H(\widehat{\mathbf{X}}_p^{(n-1)}), \hat{w}_H(\widehat{\mathbf{X}}_p^{(n-1)})\) by Alg. \ref{alg:g2p}
\State Sample \(\mathbf{W}_p^{(n)}\sim \mathcal{N}(0, 2D_u \tau \mathbf{I})\)
\State Update positions and weights:\\ \quad \quad \qquad
\(\widehat{\mathbf{X}}_p^{(n)} = \widehat{\mathbf{X}}_p^{(n-1)} + \chi \tau \nabla \hat{v}_H^{(n-1)}(\widehat{\mathbf{X}}_p^{(n-1)}) + \mathbf{W}_p^{(n)},\)\\\quad \quad \qquad\(
\hat{a}_p^{(n)}=\hat{a}_p^{(n-1)}(1+\tau \rho(\hat{w}_H^{(n-1)}(\widehat{\mathbf{X}}_p^{(n-1)})) - \tau)\)
\EndFor
\EndFor
\end{algorithmic}
\end{algorithm}

\begin{theorem}
\label{thm:overall_complexity}
The single-step complexity of Algorithm~\ref{alg:sipf-pic} is \( O \left(H^3 \log H + P \right) \).
\end{theorem}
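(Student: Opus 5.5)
The plan is to go through one iteration of the loop in Algorithm~\ref{alg:sipf-pic} line by line, bound the cost of each line, and add the bounds. The operations fall into two kinds. Grid-wide operations touch each of the $|U_H|=(H+1)^3=O(H^3)$ grid nodes or Fourier modes a bounded number of times, or are FFTs. Per-particle operations touch each of the $P$ particles a bounded number of times. The whole point is that no step couples every particle with every mode; the \(O(PH^3)\) cost of evaluating \eqref{eq:gradv} directly is avoided by the PIC operations.

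\textbf{Particle-to-grid step.} First I will bound Algorithm~\ref{alg:p2g}. Initializing $\overline{u}$ costs $O(H^3)$. For each particle, $\Gamma_{p,H}$ is found in $O(1)$ by computing the cell index $\lfloor X_{p,j}/h\rfloor$ in each coordinate, using the periodic wrap-around. Because $R_h$ is locally supported, $|\Gamma_{p,H}|$ is bounded by a constant: $8$ for the trilinear kernel, and a fixed small number of stencil points for $R_h^{(4)}$. Each accumulation costs $O(1)$, so the algorithm costs $O(P+H^3)$.

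\textbf{Field update.} Next I will treat the update of $\mathbf{g}$. The source $\mathbf{S}(\overline{u},\hat{\mathbf{g}})$ is evaluated pointwise on the physical grid. This needs $\hat{\mathbf{g}}^{(n-1)}$ at grid points, which one inverse FFT per component supplies in $O(H^3\log H)$; the pointwise evaluation itself costs $O(H^3)$. One forward FFT per component then gives $\mathcal{F}_H[\mathbf{S}]_{\mathbf{q}}$ in $O(H^3\log H)$. The kernel $\widehat{\mathbf{K}}_\tau$ is computed once before time stepping, by a DFT of $\widehat{\mathbf{G}}_\tau$ as in Lemma~\ref{lemma:kernel}, so it is not a per-step cost. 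With it precomputed, the multiplication by $\widehat{\mathbf{K}}_\tau$ and the low-pass filter are diagonal in $\mathbf{q}$ and cost $O(H^3)$. Since there are only three components, this block costs $O(H^3\log H)$ in total.

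\textbf{Grid-to-particle step and particle update.} The three components of $\nabla\hat v$ at grid points are obtained by multiplying $\hat{k}_{v\mathbf{q}}$ by $\frac{2\pi i}{L}q_j$ and applying an inverse FFT; $\hat w$ at grid points needs one more inverse FFT. These cost $O(H^3\log H)$. In the particle loop, Algorithm~\ref{alg:g2p} again uses $|\Gamma_{p,H}|=O(1)$, so each particle costs $O(1)$. Sampling a Gaussian and applying the explicit updates \eqref{eq:u_hat_update} are also $O(1)$ per particle, so the loop costs $O(P)$. Summing all blocks gives $O(H^3\log H+P)$.

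\textbf{Main obstacle.} The step needing the most care is the claim that locating $\Gamma_{p,H}$ and evaluating $R_h$ or $K_h$ costs $O(1)$ per particle, independently of $H$. I would justify this explicitly from the tensor-product, compact-support form of the kernels: the support is determined by the integer parts of $X_{p,j}/h$, so no search over the grid is needed. I would also state the standing assumption that each FFT on an $H^3$ grid costs $O(H^3\log H)$.
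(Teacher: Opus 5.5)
Your proposal is correct and uses the same cost accounting as the paper's proof. The FFTs and inverse FFTs for the spectral update and for reconstructing $\nabla\hat v,\hat w$ on the grid cost $O(H^3\log H)$, and the locally supported particle-to-grid and grid-to-particle transfers cost $O(P)$; you additionally make explicit what the paper leaves implicit, namely precomputing $\widehat{\mathbf{K}}_\tau$ and locating $\Gamma_{p,H}$ in $O(1)$ from the cell indices.
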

\begin{proof}
 Each iteration costs \( O(H^3 \log H + P) \): spectral updates require FFT with complexity \( O(H^3 \log H) \) and particle-to-grid transfer with \( O(P) \), while particle updates require inverse FFT with \( O(H^3 \log H) \) and grid-to-particle interpolation with \( O(P) \).
\end{proof}

\subsection{Algorithmic Properties}
\label{subsec:Properties}
In this section, we further derive several structural properties, including the interpolation error of PIC-related operations, an upper bound on the mass evolution rate, the resampling operation in long-term simulations, and the guaranteed positivity of the numerical concentration.

\subsubsection{Interpolation error of PIC-related operations}
For the overall error analysis of Algorithm \ref{alg:sipf-pic} in Section \ref{sec:error}, we present the error bounds of the particle-in-cell operations as follows. The detailed proof can be found in\cite{hu2025fast}.
\begin{theorem}
\label{thm:pic_error}
With the fourth-order interpolation scheme, the error of the numerical frequency source term \(\mathcal{F}_H\) satisfies:
\begin{equation}
\left|\mathcal{F}_H[\mathbf{S}(\hat{u},\hat{\mathbf{g}})]_{\mathbf{q}}-\mathcal{F}[\mathbf{S}(\hat{u},\hat{\mathbf{g}})]_{\mathbf{q}}\right| \leq C_4 L^{-3}\left|\mathbf{q}\frac{2\pi}{L}\right|^4 \left(\frac{L}{H}\right)^4
\end{equation}
for some \(C_4>0\) independent of \(L\) and \(H\).

Assume \(w,\nabla v \in C^2(\Omega)\). With the second-order interpolation scheme, the error of the interpolated gradient field \(\nabla v_H\) and the interpolated concentration \(w_H\) satisfies the following for any \(\mathbf{X} \in \Omega\):
\begin{equation}
\begin{aligned}
\left|\nabla v_H(\mathbf{X})-\nabla v(\mathbf{X})\right|&\leq K_2 \left(\frac{L}{H}\right)^2,\\
\left|w_H(\mathbf{X})-w(\mathbf{X})\right|&\leq K_2 \left(\frac{L}{H}\right)^2
\end{aligned}    
\end{equation}
for some \(K_2>0\) independent of \(L\) and \(H\).
\end{theorem}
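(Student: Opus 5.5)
The statement has two parts: the particle-to-grid (deposition) error in frequency space, and the grid-to-particle interpolation error. I would handle both with the same tool, a Taylor expansion against the moment conditions of the kernel, following the approach of \cite{hu2025fast}.

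\textbf{Interpolation part.} Fix $\mathbf{X}\in\Omega$ and use the partition of unity $\sum_{\mathbf{q}}K_h(\mathbf{X},\mathbf{q}h)=1$. This gives
\[
f_H(\mathbf{X})-f(\mathbf{X})=\sum_{\mathbf{q}\in\Gamma_{\mathbf{X},H}}K_h(\mathbf{X},\mathbf{q}h)\bigl(f(\mathbf{q}h)-f(\mathbf{X})\bigr),
\]
with $f=w$ or $f=\partial_j v$. I would Taylor expand $f(\mathbf{q}h)$ about $\mathbf{X}$ to second order with integral remainder.
\begin{itemize}
\item The zeroth-order term is removed by the partition of unity.
\item The first-order term vanishes because trilinear weights reproduce affine functions exactly: $\sum_{\mathbf{q}}K_h(\mathbf{X},\mathbf{q}h)(\mathbf{q}h-\mathbf{X})=0$. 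This follows coordinatewise from the tensor-product hat functions.
\item The remainder is bounded by $\tfrac12\|D^2 f\|_\infty|\mathbf{q}h-\mathbf{X}|^2\le \tfrac32\|D^2f\|_\infty h^2$, since the support consists of the 8 vertices of the cell containing $\mathbf{X}$.
\end{itemize}
The weights are nonnegative and sum to one, so $|f_H-f|\le K_2h^2$ with $K_2=\tfrac32\max(\|D^2w\|_\infty,\|D^2\nabla v\|_\infty)$. This uses $w,\nabla v\in C^2$, and $K_2$ is independent of $L$ and $H$.

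\textbf{Frequency-source part.} Only the $m$-component of $\mathbf{S}$ depends on $u$, and in that component $\mathbf{S}$ is linear in $u$. The difference $\mathcal{F}_H-\mathcal{F}$ therefore reduces to comparing $\mathcal{F}[\overline{u}]_\mathbf{q}$ with $\mathcal{F}[\hat u]_\mathbf{q}$, where $\mathcal{F}[\overline u]$ means the discrete (grid) transform of the deposited values. I would write both as sums over particles:
\[
L^{-3}\sum_p \hat a_p\Bigl(\sum_{\mathbf{q}'}h^3R_h(\mathbf{X}_p,\mathbf{q}'h)e^{-i\frac{2\pi}{L}\mathbf{q}\cdot\mathbf{q}'h}-e^{-i\frac{2\pi}{L}\mathbf{q}\cdot\mathbf{X}_p}\Bigr).
\]
This is the interpolation error of the smooth function $\phi(\mathbf{y})=e^{-i\frac{2\pi}{L}\mathbf{q}\cdot\mathbf{y}}$ at $\mathbf{X}_p$, using the weights $h^3R_h=K_h$.

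For the fourth-order kernel $R_h^{(4)}$, I would verify the discrete moment conditions $\sum_{\mathbf{q}'}K_h(\mathbf{X},\mathbf{q}'h)(\mathbf{q}'h-\mathbf{X})^\alpha=0$ for $1\le|\alpha|\le3$. The approach is a direct check on each axis, since the kernel is built from one-dimensional cubic Lagrange-type pieces, plus the mixed terms.

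With these moments, Taylor expansion to fourth order gives a per-particle error of at most $C\,\|D^4\phi\|_\infty h^4$, and $\|D^4\phi\|_\infty\le|2\pi\mathbf{q}/L|^4$. Summing against the weights $\hat a_p$, whose total mass is normalized and bounded as in the paper's setting, yields $C_4L^{-3}|2\pi\mathbf{q}/L|^4(L/H)^4$. The kernel has bounded $\ell^1$ norm, so $C_4$ is uniform.

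\textbf{Main obstacle.} The hardest step is verifying the third-order discrete moment identities for $R_h^{(4)}$ in 3D, especially the mixed moments. The kernel is not a pure tensor product: it is a product kernel with an additive correction on the central cell plus axis-aligned arms. I would check the moments one dimension at a time, reducing the mixed ones to products of one-dimensional moments of the linear factors $(1-p_j)$. A second concern is that the total mass must be bounded; this depends on the normalization of the weights $\hat a_p$, which I would take from the paper's setting.
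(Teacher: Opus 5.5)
Your argument is the standard Taylor/moment-condition proof. That is also the only justification the paper offers, since it simply refers to \cite{hu2025fast}. The trilinear interpolation part is complete. For the deposition part, the following steps are all correct: rewriting $\mathcal{F}_H[\hat u]_{\mathbf q}-\mathcal{F}[\hat u]_{\mathbf q}$ as $L^{-3}\sum_p\hat a_p$ times the interpolation error of $\phi(\mathbf y)=e^{-i\frac{2\pi}{L}\mathbf q\cdot\mathbf y}$ at $\widehat{\mathbf X}_p$; the remainder bound $\frac{1}{24}\bigl|\frac{2\pi}{L}\mathbf q\bigr|^4|\mathbf y-\mathbf X|^4$; the $\ell^1$ bound on the sign-changing kernel; and the mass bound of Corollary~\ref{corollary:mass}.

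\textbf{The error is in your reduction step.} It is false that only the $m$-component of $\mathbf S$ depends on $u$, because the $w$-component is $\gamma v-\eta(u)w$ with $\eta(u)=\frac{2u}{1+u}$. Linearity gives you nothing there, and the moment argument cannot be extended to cover it. First, $\mathcal F[\eta(\hat u)\hat w]_{\mathbf q}$ is not even defined when $\hat u$ is a sum of Dirac masses. Second, the deposition error does not pass through the nonlinearity $\eta$ and the product with $\hat w$ in a way that yields a $|\mathbf q|^4h^4$ bound. So you should not dismiss this component. Instead, state explicitly that the $v$-component difference vanishes identically, and that the estimate is really a statement about the part linear in $u$, namely $|\mathcal F_H[\hat u]_{\mathbf q}-\mathcal F[\hat u]_{\mathbf q}|$. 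That is the form proved in \cite{hu2025fast}, where the source is linear in the density. It is also the form in which the bound is invoked for the MDE component in Lemma~\ref{lemma:c_estimate}.

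\textbf{The step you flag as the main obstacle closes cleanly.} Work in cell coordinates $x\in[0,1]$. Let $\lambda_0,\lambda_1$ be the hat weights and $\ell_{-1},\dots,\ell_2$ the cubic Lagrange weights on the nodes $-1,0,1,2$. One checks two facts:
\begin{itemize}
\item $\ell_i=\lambda_i\bigl(1+\frac{1}{2}x(1-x)\bigr)$ for $i=0,1$;
\item the arm factor $-\frac{1}{6}(p-1)(2-p)(3-p)$ equals $\ell_{-1}$ or $\ell_2$.
\end{itemize}
Consequently the $32$-point kernel $R_h^{(4)}$ is exactly the operator combination $Q_1L_2L_3+L_1Q_2L_3+L_1L_2Q_3-2L_1L_2L_3$. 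Here $L_j$ and $Q_j$ denote linear and cubic interpolation along axis $j$.

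Now take any monomial of total degree at most $3$; it has at most one exponent $\ge 2$. Along the other two axes both $L$ and $Q$ are exact, so the combination collapses to one-dimensional cubic interpolation, which is also exact. This gives the partition of unity and all moment identities for $1\le|\alpha|\le 3$, mixed moments included, in one stroke.
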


\subsubsection{Resampling and Long-term Simulation}
Concerning the mass variation of the particles, we have the following lemma:
\begin{lemma}
\label{lemma:mass}
The following estimate holds for every particle \(1\leq p \leq P\) and timestep \(n\): \begin{equation}
\begin{aligned}
(1-\tau)\hat{a}_p^{(n)} \leq \hat{a}_p^{(n+1)} \leq (1+\tau) \hat{a}_p^{(n)}.
\end{aligned}
\end{equation}   
\end{lemma}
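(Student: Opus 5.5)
The estimate follows from the weight update rule in \eqref{eq:u_hat_update}. It reads
\[
\hat{a}_p^{(n+1)}=\hat{a}_p^{(n)}\bigl(1-\tau+\tau\rho(\hat{w}_H^{(n)}(\widehat{\mathbf{X}}_p^{(n)}))\bigr),
\]
so the claim reduces to showing that the multiplier satisfies $1-\tau\le 1-\tau+\tau\rho\le 1+\tau$. Equivalently, we need $0\le \rho(\hat{w}_H^{(n)}(\widehat{\mathbf{X}}_p^{(n)}))\le 2$. We also need $\hat{a}_p^{(n)}\ge 0$, so that multiplying the inequality by the weight preserves its direction.

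\textbf{Range of $\rho$.} For $w\ge 0$ we have $\rho(w)=\frac{2w}{1+w}\in[0,2)$. The whole difficulty is therefore to ensure that the interpolated oxygen concentration $\hat{w}_H^{(n)}$ is nonnegative at every particle position.

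\textbf{Nonnegativity of $\hat{w}_H^{(n)}$.} This is the main obstacle, and I would handle it in three steps.
\begin{itemize}
\item[(a)] The grid values $\hat{w}^{(n)}(\mathbf{q}h)$ are nonnegative. This is the positivity-preservation property announced for the algorithm. Assuming nonnegative grid data at step $n-1$ and $\tau<1$, the reaction step gives
\[
w+\tau(\gamma v-\eta(\bar u)w)\ge w(1-2\tau)+\tau\gamma v\ge 0,
\]
using $0\le \eta(\bar u)<2$ when $\bar u\ge 0$; this needs $\tau\le 1/2$. The diffusion step then convolves with the nonnegative physical-grid kernel $\widehat{\mathbf{G}}_\tau$ of Lemma~\ref{lemma:kernel}, which keeps the values nonnegative. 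The Gaussian low-pass filter is likewise a convolution with a positive kernel.
\item[(b)] The interpolation \eqref{eq:g2ph} is a convex combination of grid values when $K_h\ge 0$. This holds for the trilinear (second-order) kernel, which Theorem~\ref{thm:pic_error} uses for grid-to-particle interpolation. Hence $\hat{w}_H\ge 0$.
\item[(c)] Nonnegativity of $\bar u$ also needs $K_h\ge 0$ together with nonnegative weights, and nonnegative weights come from the induction itself.
\end{itemize}

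\textbf{Induction.} I would run a simultaneous induction on $n$ over three statements: nonnegative weights, nonnegative grid concentrations $v,m,w$, and the bound claimed in the lemma. This requires the mild step-size condition $\tau\le 1/2$, under which the factor $1-\tau>0$ keeps the weights positive. The steps for $v$ and $m$ are analogous: $v(1-\alpha\tau m)\ge 0$ requires $\alpha\tau m\le 1$, which holds for sufficiently small $\tau$ given bounded $m$, and $m+\tau\bar u\ge 0$ holds trivially.

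If the paper intends $\rho$ to be evaluated only where $w\ge 0$ is assumed, then the lemma is immediate from $0\le\rho<2$. I would present that short argument and cite the positivity result for $\hat{w}_H$.
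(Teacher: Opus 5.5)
Your core argument is the paper's. The paper's proof observes that the multiplier in \eqref{eq:u_hat_update} is $1+\tau(\rho(w)-1)$, with $-1\le\rho(w)-1\le 1$ for $w\ge 0$, and it takes $w\ge 0$ for granted. You also note that $\hat a_p^{(n)}\ge 0$ is needed to multiply the inequalities through, and that this follows by induction once $\tau<1$. That correctly fills a step the paper leaves implicit.

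The supplementary induction you sketch for $\hat w_H\ge 0$ does not close as written for the scheme the paper analyzes.
\begin{itemize}
\item Step (c) needs a nonnegative assignment function. The fourth-order $R_h^{(4)}$ used for particle-to-grid transfer in Theorems~\ref{thm:pic_error} and~\ref{thm:overall} is strictly negative on its outer branches ($1<p_j\le 2$), so $\bar u$ can be negative. Then $\eta(\bar u)=2\bar u/(1+\bar u)$ is no longer confined to $[0,2)$ (it blows up at $\bar u=-1$), and $m+\tau\bar u\ge 0$ need not hold. Your argument therefore works only with a nonnegative transfer such as trilinear.
\item Your explicit $v$ step needs $\alpha\tau\hat m\le 1$ on the grid, an a priori bound your induction does not supply. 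The paper instead uses the implicit update $\hat v^{(n+1)}=(1+\alpha\tau\hat m^{(n)})^{-1}\hat v^{(n)}$, which needs only $\hat m^{(n)}\ge 0$.
\item The low-pass filter multiplies only the truncated coefficients on $U_H$. On the grid it is therefore a circular convolution with a truncated, non-aliased Gaussian sum, which need not be nonnegative. You would need an aliased construction as in Lemma~\ref{lemma:kernel}.
\end{itemize}
The paper's own positivity lemma also passes over the first and third points. The defensible write-up is the short version you end with: state nonnegativity of $\hat w_H^{(n)}$ at particle positions and $\tau<1$ as explicit hypotheses, and the lemma follows by induction on $n$.
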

\begin{proof}
This follows directly from the update rule \eqref{eq:u_hat_update} and the fact that \(-1 \leq \rho(w)-1  \leq 1\) where \(\rho(w)=\frac{2w}{1+w}\) and \(w\geq 0\).
\end{proof}

Summing over all particles \(1\leq p \leq P\) yields the following corollary.
\begin{corollary}
\label{corollary:mass}
The total masses \(\widehat{M}^{(n)}:=\sum_{p=1}^P \hat{a}_p^{(n)}\) satisfy:
\begin{equation}
M_0e^{-t_n} \leq \widehat{M}^{(n)} \leq M_0e^{t_n},
\end{equation}
where \(M_0:=\int_\Omega u(\mathbf{x},0)\,\mathrm{d}\mathbf{x}\) is the total initial mass and \(t_n=n\tau\).
\end{corollary}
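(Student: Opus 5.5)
The plan is to sum the per-particle estimate of Lemma~\ref{lemma:mass} over $p$ and iterate in $n$, and then to compare the resulting products with exponentials. First I will fix the normalization at initialization. Algorithm~\ref{alg:sipf-pic} samples $\widehat{\mathbf{X}}_p^{(0)}$ i.i.d.\ from $u(\cdot,0)$, so the weights must be taken equal, $\hat a_p^{(0)} = M_0/P$, for $\hat u^{(0)}$ to represent $u(\cdot,0)$. With this choice $\widehat M^{(0)} = M_0$ exactly, and the claim holds at $n=0$.

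\textbf{Upper bound.} By Lemma~\ref{lemma:mass}, $\hat a_p^{(n+1)} \le (1+\tau)\hat a_p^{(n)}$. Summing over $p$ gives $\widehat M^{(n+1)} \le (1+\tau)\widehat M^{(n)}$. Since $1+\tau \le e^{\tau}$, induction on $n$ yields $\widehat M^{(n)} \le (1+\tau)^n M_0 \le M_0 e^{n\tau} = M_0 e^{t_n}$.

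\textbf{Lower bound.} This is the main obstacle. The crude factor $1-\tau$ from Lemma~\ref{lemma:mass} is not enough, because $1-\tau \le e^{-\tau}$ points the wrong way. I therefore return to the update rule \eqref{eq:u_hat_update}, which gives the exact factor $1-\tau+\tau\rho(\hat w_H^{(n)}(\widehat{\mathbf{X}}_p^{(n)}))$. It then suffices to show
\begin{equation*}
1-\tau+\tau\rho(\hat w_H) \ge e^{-\tau}.
\end{equation*}
Because $e^{-\tau} \le 1-\tau+\tau^2/2$, this reduces to showing $\rho(\hat w_H) \ge \tau/2$ at every particle position and every step. Since $\rho(w)=2w/(1+w)$, the condition holds as soon as $\hat w_H \ge \tau/4$, and in particular whenever $\hat w_H \ge c_0 > 0$ uniformly and $\tau \le 4c_0$.

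The task is therefore a uniform positive lower bound on the interpolated oxygen field, which I will obtain in three steps.
\begin{enumerate}
\item I will use the positivity-preserving kernel $\widehat{\mathbf{G}}_\tau$ from Lemma~\ref{lemma:kernel} together with the explicit reaction step. The $v$-update $\hat v + \tau(-\alpha \hat m \hat v) = \hat v(1-\alpha\tau \hat m)$ keeps $\hat v$ positive and bounded below by $v_{\min}(1-\alpha\tau\|\hat m\|_\infty)^n \ge v_{\min}e^{-C t_n}$, for $\tau$ small and $\hat m$ bounded on $[0,T]$.
\item The $w$-reaction step gives $\hat w(1-\tau\eta(\bar u)) + \tau\gamma\hat v \ge \hat w(1-\tau)+\tau\gamma \hat v$, since $0 \le \eta < 2$ and $\tau$ is small. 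The diffusion/decay kernel is nonnegative with mass $e^{-\tau}$, so it preserves a constant lower bound up to the factor $e^{-\tau}$. A discrete Gronwall-type recursion $\underline w^{(n+1)} \ge e^{-\tau}\bigl((1-2\tau)\underline w^{(n)} + \tau\gamma \underline v^{(n)}\bigr)$ then yields $\hat w \ge c_0(T) > 0$ on the grid, given $w(\cdot,0),v(\cdot,0) > 0$.
\item The second-order interpolation kernel $K_h$ is trilinear, hence nonnegative with unit sum. Consequently $\hat w_H \ge c_0$ at every particle, and the low-pass Gaussian filter likewise preserves positivity.
\end{enumerate}

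With this lower bound in hand, each weight satisfies $\hat a_p^{(n+1)} \ge e^{-\tau}\hat a_p^{(n)}$. Summing over $p$ and iterating gives $\widehat M^{(n)} \ge M_0 e^{-t_n}$, which completes the proof for $\tau \le \tau_0(T)$.

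If the positivity argument for $w$ turns out to need assumptions not yet available at this point in the paper, the fallback is to record the lower bound in the form $\widehat M^{(n)} \ge M_0(1-\tau)^{n}$. I will also remark that this coincides with $M_0 e^{-t_n}$ up to a factor $e^{-O(\tau t_n)}$.
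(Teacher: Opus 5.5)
Your upper bound is exactly the paper's argument: the corollary is obtained there by summing Lemma~\ref{lemma:mass} over $p$, iterating, and using $(1+\tau)^n\le e^{t_n}$. The paper obtains the lower bound by the same summation, which only gives $\widehat{M}^{(n)}\ge(1-\tau)^nM_0$. You are right that this does not imply $M_0e^{-t_n}$, because $(1-\tau)^n<e^{-t_n}$ for $n\ge1$.

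In fact the stated lower bound is false without additional hypotheses. Take $v(\cdot,0)\equiv w(\cdot,0)\equiv0$. Then the sources of $\hat v$ and $\hat w$ vanish identically, so $\hat w\equiv0$ and $\rho(\hat w_H)=0$. Every weight is then multiplied by exactly $1-\tau$ at each step, and $\widehat{M}^{(n)}=M_0(1-\tau)^n<M_0e^{-t_n}$.

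Your fallback is therefore the correct unconditional statement, and it is precisely what the paper's argument proves. Since $\log(1-\tau)\ge-\tau-\tau^2$ for $\tau\le1/2$, it reads $\widehat{M}^{(n)}\ge M_0e^{-(1+\tau)t_n}$. The error analysis in Section~\ref{sec:error} only ever invokes the upper bound anyway. (Alternatively, the weight update $\hat a_p^{(n)}=\hat a_p^{(n-1)}e^{\tau(\rho(\hat w_H)-1)}$, i.e.\ the exact flow of the $\log a$ equation in \eqref{eq:SDE} with $w$ frozen, would make the two-sided bound hold exactly.)

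Your sharpened lower bound is a genuinely different argument and can be made rigorous. It does, however, need hypotheses that are not part of the corollary:
$w_{\min}:=\min_{\mathbf x}w(\mathbf x,0)>0$; a restriction $\tau\le\tau_0(T,w_{\min})$; and nonnegative deposition and interpolation kernels such as trilinear ones. The last point matters because $R_h^{(4)}$ is negative on part of its support, so $\bar u\ge0$, and hence $0\le\eta(\bar u)\le2$, can fail.

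Two of your steps need repair.

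\textbf{Step~1.} It rests on an $H$-independent bound for $\|\hat m\|_\infty$ that you do not have. The natural a priori bound is $\|\bar u^{(n)}\|_\infty\le\widehat{M}^{(n)}h^{-3}\le M_0e^{T}(H/L)^3$, so your constant $C$ and the condition $\alpha\tau\|\hat m\|_\infty<1$ degenerate as $H\to\infty$. The step is also unnecessary. With the implicit update $\hat v^{(n+1)}=\hat v^{(n)}/(1+\alpha\tau\hat m^{(n)})$ you only need $\hat v\ge0$. Your step-2 recursion with $\underline v\ge0$ then already gives $\underline w^{(n)}\ge w_{\min}\bigl(e^{-\tau}(1-2\tau)\bigr)^n\ge w_{\min}e^{-4t_n}$ for $\tau\le1/4$, uniformly in $H$. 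Hence $\tau\le\min\{1/4,3w_{\min}e^{-4T}\}$ suffices. Note also that $\rho(w)\ge\tau/2$ requires $w\ge\tau/(4-\tau)$, not merely $w\ge\tau/4$.

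\textbf{Step~3.} The low-pass filter does not exactly preserve positivity. The multiplier $e^{-2\pi^2|\mathbf q|^2H_0^{-2}}$ truncated to $U_H$ has the same aliasing defect that Lemma~\ref{lemma:kernel} removes for $\widehat{\mathbf K}_\tau$, so its grid kernel can have exponentially small negative entries. You must either alias the filter in the same way or carry an exponentially small error term through the recursion.
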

By Lemma \ref{lemma:mass}, the ratio of maximum to minimum particle mass is bounded over finite time \(t=n\tau \):
\begin{equation}
\frac{\max_p\hat{a}_p^{(n)}}{\min_p\hat{a}_p^{(n)}} \leq e^{2t}.
\end{equation}
Therefore, the statistical law of large numbers can be applied (see Section \ref{sec:error}).

For long-term simulations, the mass ratio becomes uncontrolled, leading to severe weight degeneracy where a few particles dominate the ensemble while the rest contribute negligibly; thus, time-periodic resampling is required in long-time simulations. To perform this step, the following residual resampling algorithm (Alg.~\ref{alg:res_res}) can be applied. This scheme operates by replicating particles according to the integer part of their normalized mass and stochastically retaining the fractional remainder.

\begin{algorithm}
\caption{Residual Resampling}
\label{alg:res_res}
\begin{algorithmic}[1]
\State Input step count \(n\), particles \(\{\widehat{\mathbf{X}}_p^{(n)}\}_{1\leq p \leq P}\), weights \(\{\hat{a}_p^{(n)}\}_{1\leq p \leq P}\)
\Statex
\State \textbf{Initialize:} empty list of new particles \(\mathcal{X}^{(n)} \gets \emptyset\)
\State Select threshold weight \(a_0 = \frac{1}{P} \sum_{p=1}^P \hat{a}_p^{(n)}\)
\Statex
\For{$p = 1$ to $P$}
    \State \(k_p \gets \lfloor \hat{a}_p^{(n)} / a_0 \rfloor\) \Comment{Integer part: deterministic splitting}
    \State Add \(k_p\) copies of \(\widehat{\mathbf{X}}_p^{(n)}\) to \(\mathcal{X}^{(n)}\)
    \State \(r_p \gets \hat{a}_p^{(n)} - k_p \cdot a_0\) \Comment{Fractional residual mass}
\EndFor
\Statex
\State Let \(N_{\text{stoch}} \gets P - \sum_{p=1}^P k_p\)
\State Sample \(N_{\text{stoch}}\) indices from \(\{1,\dots,P\}\) with probabilities proportional to \(r_p\)
\For{each sampled index \(p\)}
    \State Add one copy of \(\widehat{\mathbf{X}}_p^{(n)}\) to \(\mathcal{X}^{(n)}\)
\EndFor
\Statex
\State \textbf{Output:} Resampled list of particles \(\mathcal{X}^{(n)}\)
\end{algorithmic}    
\end{algorithm}

The resampled list of particles \(\mathcal{X}^{(n)}\) in Alg.~\ref{alg:res_res} consists of \(P\) equal-weighted particles, which eliminates the possible weight degeneracy and restores the validity of the statistical law of large numbers.

\subsubsection{Positivity-preserving Properties}

We present the following lemmas to show that in addition to the naturally preserved positivity of $u$ represented by particles, the concentrations $v,m,w$ also preserve positivity in the physical domain. The first lemma shows that when the grid spacing $L/H$ tends to zero faster than the step size $\tau$, we can construct a kernel $\widehat{\mathbf{K}}_\tau$ whose inverse DFT yields a positivity-preserving quantity on the physical grid, and this kernel is exponentially close to the theoretical kernel $\mathbf{K}_\tau$. Similar exponential tail bounds for Fourier-grid approximations of Gaussian kernels have been derived by Barnett, Greengard, and Rachh\cite{uniform}, where the truncation error of the squared-exponential kernel is shown to decay exponentially with the frequency cut-off. Our theorem provides a constructive application based on this truncation error estimate.
\begin{lemma}
\label{lemma:kernel}
Given the grid spacing $L/H \to 0$ sufficiently fast relative to $\tau$, there exists $\widehat{K}_{m\tau} \in \mathbb{R}^{U_H}$ such that all components of the discrete inverse Fourier transform $\mathcal{F}_H^{-1} \widehat{K}_{m\tau}$ are positive, and for each $\mathbf{q} \in U$,
\[
\left|\widehat{K}_{m\tau,\mathbf{q}} - K_{m\tau,\mathbf{q}}\right| < 4e^{-\pi^2 D_m \tau H^2 L^{-2}},
\]
where $D_m$ is the diffusion coefficient of the concentration \(m\).
\end{lemma}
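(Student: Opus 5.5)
The construction is to sample the periodized physical-space heat kernel on the grid and take its DFT, rather than truncating the Fourier symbol. Define on the physical grid $\{\mathbf{j}h:\mathbf{j}\in U_H\}$, $h=L/H$,
\[
\widehat{G}_{m\tau}(\mathbf{j}h) := \frac{h^3}{L^3}\sum_{\mathbf{n}\in\mathbb{Z}^3} (4\pi D_m\tau)^{-\frac32} e^{-\frac{|\mathbf{j}h+\mathbf{n}L|^2}{4D_m\tau}-\beta\tau},
\]
and set $\widehat{K}_{m\tau}:=\mathcal{F}_H\widehat{G}_{m\tau}$, the DFT of these samples. Positivity of $\mathcal{F}_H^{-1}\widehat{K}_{m\tau}=\widehat{G}_{m\tau}$ is then immediate, since every summand is a positive Gaussian. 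The remaining work is the error estimate.

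For the estimate, I would apply the Poisson summation formula twice. First, the periodized Gaussian $G^{\rm per}(\mathbf{x})$ has exact Fourier coefficients $L^{-3}\int G^{\rm per}e^{-2\pi i\mathbf{q}\cdot\mathbf{x}/L}\,\mathrm{d}\mathbf{x}$ equal to $K_{m\tau,\mathbf{q}}=e^{-4\pi^2|\mathbf{q}|^2D_m\tau/L^2-\beta\tau}$, up to the paper's normalization of $\mathcal{F}$. Second, sampling on the grid aliases these coefficients, so
\[
\widehat{K}_{m\tau,\mathbf{q}}=\sum_{\mathbf{r}\in\mathbb{Z}^3}K_{m\tau,\mathbf{q}+H\mathbf{r}} .
\]
Hence $\widehat{K}_{m\tau,\mathbf{q}}-K_{m\tau,\mathbf{q}}=\sum_{\mathbf{r}\neq 0}K_{m\tau,\mathbf{q}+H\mathbf{r}}$. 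For $\mathbf{q}\in U_H$ we have $|q_i|\le H/2$, so for $\mathbf{r}\ne0$ every coordinate with $r_i\neq0$ satisfies $|q_i+Hr_i|\ge H/2$, and more generally $|q_i+Hr_i|\ge (|r_i|-\tfrac12)H$.

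I would then bound the tail by factorizing over coordinates. Writing $a=4\pi^2D_m\tau/L^2$, the sum over $\mathbf{r}\neq 0$ is at most $\prod_i S_i - \prod_i e^{-aq_i^2}$, where $S_i=\sum_{r}e^{-a(q_i+Hr)^2}$. Each $S_i$ is at most $e^{-aq_i^2}+2\sum_{k\ge1}e^{-a(k-\frac12)^2H^2}$. The geometric-type sum is at most $2e^{-aH^2/4}(1+\epsilon)$ once $aH^2$ is large, which is exactly the hypothesis that $L/H\to0$ fast enough relative to $\tau$; for instance $H^{-\psi}=o(\tau)$ with $\psi<2$ forces $\tau H^2\to\infty$. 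Since $aH^2/4=\pi^2D_m\tau H^2L^{-2}$, expanding the product over the three coordinates gives a bound of the form $3\cdot(\text{const})e^{-\pi^2D_m\tau H^2/L^2}$ plus higher-order exponentially smaller terms. The factor $e^{-\beta\tau}\le1$ only helps.

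The main obstacle I expect is bookkeeping the constant to land at exactly $4$. A naive product expansion gives roughly $6e^{-aH^2/4}$, because each coordinate contributes two neighbouring aliases $r_i=\pm1$. I would tighten this by noting that for a given $q_i$ only one of the two aliases $r_i=\pm1$ attains distance $H/2$, while the other is at distance at least $H$, which is far smaller. I would also use $e^{-aq_j^2}\le 1$ together with the sharper condition that the other coordinates' sums are at most $1+o(1)$. If exact constant matching still fails, the argument would absorb the slack into the requirement that $\tau H^2L^{-2}$ be large, making the higher-order terms below any fixed fraction of the leading term.
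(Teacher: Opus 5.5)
Your proposal is correct and is essentially the paper's proof. The paper sets $\widehat{K}_{m\tau,\mathbf{q}}=\sum_{\mathbf{n}\in\mathbb{Z}^3}K_{m\tau,\mathbf{q}+\mathbf{n}H}$, notes that its inverse DFT is the periodized Gaussian sampled on the grid (hence positive), and bounds the non-principal aliases by a Gaussian lattice sum; you reach the same object from the physical side via Poisson summation. Your coordinatewise refinement is in fact needed to get the constant $4$: only one of $r_i=\pm1$ sits at distance $H/2$, so each one-dimensional alias tail is at most $e^{-\pi^2 D_m\tau H^2/L^2}(1+o(1))$, and the product expansion gives at most $3e^{-\pi^2 D_m\tau H^2/L^2}(1+o(1))$. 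By contrast, the paper's intermediate bound $\sum_{\mathbf{n}\neq\mathbf{0}}e^{-\pi^2 D_m\tau H^2|\mathbf{n}|^2/L^2}$ has leading term $6e^{-\pi^2 D_m\tau H^2/L^2}$, and your ``absorb the slack'' fallback could not have repaired a leading constant above $4$.
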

\begin{proof}
For each $\mathbf{q} \in U$, let $\widehat{K}_{m\tau,\mathbf{q}} = \sum_{\mathbf{n} \in \mathbb{Z}^3} K_{m\tau,\mathbf{q} + \mathbf{n}H}$. The inverse DFT of this aliased kernel $\widehat{G}_{m\tau}$ gives the sum over all periodic copies of the Gaussian kernel in the physical domain; specifically, for every lattice point $\mathbf{x} = \mathbf{p} \frac{L}{H}$,
\begin{equation}
\widehat{G}_{m\tau}\left(\mathbf{p} \frac{L}{H}\right) = \sum_{\mathbf{n} \in \mathbb{Z}^3} G_{m\tau}\left(\mathbf{p}\frac{L}{H} + \mathbf{n}L\right).    
\end{equation}
Hence, all its components are positive.

To estimate the difference between the aliased kernel $\widehat{K}_{m\tau,\mathbf{q}}$ and the principal component $K_{m\tau,\mathbf{q}}$, recall that
\(
K_{m\tau,\mathbf{q}} = e^{-\frac{4\pi^2 |\mathbf{q}|^2}{L^2} D_m \tau - \beta \tau}.
\)
Summing over all non-principal components ($\mathbf{n} \neq \mathbf{0}$) gives
\begin{equation}
\left|\widehat{K}_{m\tau,\mathbf{q}} - K_{m\tau,\mathbf{q}}\right| \leq \sum_{\mathbf{n} \in \mathbb{Z}^3 \setminus \{\mathbf{0}\}} e^{-\frac{4\pi^2 |\mathbf{q} + \mathbf{n}H|^2}{L^2} D_m \tau - \beta \tau} \leq \sum_{\mathbf{n} \in \mathbb{Z}^3 \setminus \{\mathbf{0}\}} e^{-\frac{\pi^2 H^2|\mathbf{n}|^2}{L^2} D_m \tau},
\end{equation}
which can be bounded and simplified to obtain the desired estimate.
\end{proof}
Since the remaining components of the error analysis (Section \ref{sec:error}) involve only polynomial convergence rates, and our parameter choice \(\tau = H^{-\frac{16}{13}}\) (Corollary \ref{cor:optimized_error}) satisfies the condition for exponential decay, the difference between the frequency-truncated kernel and the kernel preserving physical positivity becomes negligible. For the concentration $w$ that involves diffusion, we have a similar conclusion. Since the concentration $v$ only involves reaction terms, we can update $v$ implicitly:
\begin{equation}
\hat{v}^{(n+1)}=(1+\alpha \tau \hat{m}^{(n)})^{-1} \cdot \hat{v}^{(n)}.
\end{equation}
The following lemma shows that in this setting, Algorithm~\ref{alg:sipf-pic} is unconditionally positivity-preserving.
\begin{lemma}
For timestep \(\tau \leq 0.5\), the algorithm-maintained \(\hat{v}^{(n)}(\mathbf{q}\frac{L}{H}),\\\hat{m}^{(n)}(\mathbf{q}\frac{L}{H}),\hat{w}^{(n)}(\mathbf{q}\frac{L}{H}) \geq 0\) for any step count \(n \geq 0\) and grid location $\mathbf{q}\frac{L}{H}$.
\end{lemma}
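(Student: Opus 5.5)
We argue by induction on $n$, working grid-pointwise. The base case $n=0$ holds because the initial data $v(\cdot,0),m(\cdot,0),w(\cdot,0)$ are nonnegative and the grid values of $\hat{\mathbf{g}}^{(0)}$ are their samples. For the inductive step we assume $\hat v^{(n)},\hat m^{(n)},\hat w^{(n)}\ge 0$ at every grid point. Each update splits into a pointwise reaction stage and a diffusion stage. We show that each stage maps nonnegative grid functions to nonnegative grid functions, and then check the low-pass filter separately.

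\emph{The $v$ component.} We use the implicit update $\hat v^{(n+1)}=(1+\alpha\tau\hat m^{(n)})^{-1}\hat v^{(n)}$. Since $\hat m^{(n)}\ge0$ by hypothesis, the factor $(1+\alpha\tau\hat m^{(n)})^{-1}$ lies in $(0,1]$. Hence $\hat v^{(n+1)}\ge0$ with no restriction on $\tau$.

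\emph{The $m$ component.} The reaction stage gives $\hat m^{(n)}+\tau\overline u^{(n)}$ on the grid. Here $\overline u^{(n)}\ge0$ because Algorithm~\ref{alg:p2g} accumulates $\hat a_p R_h$, and $\hat a_p>0$ by Lemma~\ref{lemma:mass} since $\tau<1$.
\begin{itemize}
\item[] \emph{Caveat.} This requires a nonnegative assignment function. We use trilinear $R_h\ge0$; the fourth-order $R_h^{(4)}$ has negative lobes, so this point must be stated explicitly.
\end{itemize}
The diffusion stage multiplies in frequency by $\widehat K_{m\tau}$, which is convolution on the grid with $\mathcal F_H^{-1}\widehat K_{m\tau}$. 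This kernel is positive by Lemma~\ref{lemma:kernel}, being the periodized Gaussian times $e^{-\beta\tau}$. A discrete convolution of a nonnegative grid function with a positive kernel is nonnegative.

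\emph{The $w$ component.} The reaction stage gives
\[
\hat w^{(n)}+\tau(\gamma\hat v^{(n)}-\eta(\overline u^{(n)})\hat w^{(n)})=(1-\tau\eta(\overline u^{(n)}))\hat w^{(n)}+\tau\gamma\hat v^{(n)}.
\]
Since $0\le\eta<2$, the hypothesis $\tau\le 0.5$ gives $1-\tau\eta\ge0$. Both terms are then nonnegative, and this is exactly where the timestep restriction enters. Diffusion then uses the analogous positive aliased kernel $\widehat K_{w\tau}$, whose positivity follows by the same argument as in Lemma~\ref{lemma:kernel}.

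\emph{Main obstacle: the low-pass filter.} Multiplying by $e^{-2\pi^2|\mathbf q|^2H_0^{-2}}$ on the truncated set $U_H$ is a grid convolution, but a truncated Gaussian in frequency need not have a positive inverse DFT. To fix this, we replace the filter by its aliased version $\sum_{\mathbf n}e^{-2\pi^2|\mathbf q+\mathbf nH|^2H_0^{-2}}$, exactly as in Lemma~\ref{lemma:kernel}. Its inverse DFT is a periodized Gaussian, hence positive. It differs from the original filter by at most $O(e^{-cH^2/H_0^2})$, which is negligible when $H_0\ll H$. Alternatively, the filter can be absorbed into $\widehat{\mathbf K}_\tau$, since a product of two Gaussian symbols is a Gaussian symbol, and that combined kernel is aliased. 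In either case the filter is convolution with a positive kernel and preserves nonnegativity.

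With the filter handled, each stage preserves nonnegativity at all grid points, which closes the induction.
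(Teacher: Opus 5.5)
Your argument is the paper's own: nonnegativity of the explicit reaction stage on the grid, followed by convolution with the positive aliased kernel $\widehat{\mathbf K}_\tau$, which you organize as an explicit induction on $n$. The reaction-stage checks are the same as the paper's: the implicit factor $(1+\alpha\tau\hat m^{(n)})^{-1}$ for $v$, $\hat m^{(n)}+\tau\overline u^{(n)}$ for $m$, and $(1-\tau\eta)\hat w^{(n)}+\tau\gamma\hat v^{(n)}$ with $\eta<2$ and $\tau\le 0.5$ for $w$.

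Your caveats are correct, and the paper's proof passes over all of them. First, it treats $\overline u$ as nonnegative although $R_h^{(4)}$ has negative lobes. Positivity therefore holds for trilinear particle-to-grid transfer, not the fourth-order transfer assumed in Theorem~\ref{thm:overall}. Second, it ignores the low-pass filter, whose truncated Gaussian symbol need not have a positive inverse DFT, so aliasing it or absorbing it into $\widehat{\mathbf K}_\tau$ is genuinely needed. In the same spirit, your base case presupposes that $\hat{\mathbf g}^{(0)}$ is initialized from grid samples rather than from the truncated exact coefficients $\mathcal F[\mathbf g(\cdot,0)]_{\mathbf q}$, and you should state this explicitly.
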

\begin{proof}
By the update rule \eqref{eq:g_hat_phy} where the convolution kernel \(\mathbf{G}_\tau>0\), we only need to show $\mathcal{R}_\tau (v;m;w,u) = (v(1+\alpha \tau m)^{-1};(4\pi D_m \tau)^{-\frac{3}{2}}\cdot e^{-\frac{|\mathbf{x}|^2}{2D_m \tau}-\beta \tau} (m+\tau u);\\(4\pi D_w \tau)^{-\frac{3}{2}}\cdot e^{-\frac{|\mathbf{x}|^2}{2D_w \tau}-\tau}(w + \tau \gamma v - \tau \eta (u) w))\geq0$.

Since $\eta(u) = \frac{2u}{1+u} \leq 2$, we have $w + \tau \gamma v - \tau \eta (u) w \geq 0$ for $\tau \leq 0.5$. Hence, the updated $\hat{\mathbf{g}}^{(n+1)}$ is the convolution of the positive quantity $\mathcal{R}_\tau$ with the positive convolution kernel \(\mathbf{G}_\tau\), and is therefore also positive.
\end{proof}

\section{Convergence Analysis}
\label{sec:error}
In this section, we present the convergence result of the proposed SIPF-W method (Algorithm \ref{alg:sipf-pic}) for solving system \eqref{eq:haptotaxis-system} with a finite simulation time \(T\). The proof sketch shows that the particles maintained by our algorithm, and the independent branches of the particle updated according to the mean-field dynamics, remain very close to each other over a finite time.

\subsection{Preliminaries and Assumptions}
We introduce the following Brownian coupling technique to construct auxiliary random branches $(\widetilde{\mathbf{X}}_p^{(n)},\tilde{a}_p^{(n)})$ that bridge the ground truth particle density $u(\mathbf{x},n\tau)$ and the empirical particle density $\hat{u}^{(n)}(\mathbf{x})$ maintained in the algorithm.
\begin{definition}
\label{def:br_cp}
The auxiliary particles \( \widetilde{\mathbf{X}}_p^{(n)} \) and the corresponding weights \(\tilde{a}_p^{(n)}\) (for \( 1 \leq p \leq P \)) satisfy the following update process:
\begin{equation}
\label{eq:u_tilde_update}
\begin{aligned}
\widetilde{\mathbf{X}}_p^{(n)} &= \widetilde{\mathbf{X}}_p^{(n-1)} + \chi \tau \nabla v(\widetilde{\mathbf{X}}_p^{(n-1)},(n-1)\tau) + \mathbf{W}_p^{(n)},\\
\tilde{a}_p^{(n)} &=\tilde{a}_p^{(n-1)}(1+\tau \rho(w(\widetilde{\mathbf{X}}_p^{(n-1)},(n-1)\tau)) - \tau),
\end{aligned}
\end{equation}
with the initial distribution \(\widetilde{\mathbf{X}}_p^{(0)} \sim \text{i.i.d.}~ \mathbf{X}(0)\) where \(\text{Law}(\mathbf{X}(0))=u(\cdot,0)/M_0\) and \(M_0=\int_\Omega u(\mathbf{x},0)\,\mathrm{d}\mathbf{x}\), the initial masses \(\tilde{a}_p^{(0)}=M_0/P\), the Brownian motions \(\mathbf{W}_p^{(n)}\) are the same as in Algorithm \ref{alg:sipf-pic}.

The auxiliary discrete density \( \tilde{u}^{(n)} \) is defined as 
\begin{equation}
\tilde{u}^{(n)}(\mathbf{x}) = \sum\limits_{p=1}^{P} \tilde{a}_p^{(n)} \delta(\mathbf{x} - \widetilde{\mathbf{X}}_p^{(n)}).
\end{equation}
Furthermore, the auxiliary concentration $\tilde{\mathbf{k}}_\mathbf{q}^{(n)}$ in the frequency domain and $\tilde{\mathbf{g}}^{(n)}(\mathbf{x})$ in the physical domain are updated similarly to \eqref{eq:k_q_update}. The auxiliary concentration does not directly drive the movement of the auxiliary particles; it is only used for comparison with the concentration $\hat{\mathbf{g}}^{(n)}(\mathbf{x})$ maintained by the algorithm.
\begin{equation}
\label{eq:k_q_tilde_update}
\begin{aligned}
\tilde{\mathbf{k}}_{\mathbf{q}}^{(n)} &= \left(\mathbf{I} + \tau\left(\frac{4\pi^2|\mathbf{q}|^2}{L^2}\mathbf{D}+\mathbf{R}\right)\right)^{-1}\left(\tilde{\mathbf{k}}_{\mathbf{q}}^{(n-1)}+\tau\mathcal{F}[\mathbf{S}(\tilde{u}^{(n-1)},
\tilde{\mathbf{g}}^{(n-1)})]_{\mathbf{q}} \right),\\
\tilde{\mathbf{g}}^{(n)}&(\mathbf{x}) = \sum_{\mathbf{q} \in U_{H_0}} \tilde{\mathbf{k}}_{\mathbf{q}}^{(n)} e^{i \frac{2\pi}{L} \mathbf{q} \cdot \mathbf{x}}.
\end{aligned}
\end{equation}
The weighted particles \((\widetilde{\mathbf{X}}_p^{(n)}, \tilde{a}_p^{(n)})\) are distributed independently and identically according to a mass distribution function \(\phi^{(n)}(\mathbf{x},a)\); that is, every particle in the process \eqref{eq:u_tilde_update} has a distribution \(u^{(n)}(\mathbf{x})=\int a\phi^{(n)}(\mathbf{x},a)\,\mathrm{d}a\). The temporal semi-discrete implicit reference concentration $\mathbf{k}_\mathbf{q}^{(n)}$ (in the frequency domain) and $\mathbf{g}^{(n)}(\mathbf{x})$ (in the physical domain) are updated using \(u^{(n)}\):
\begin{equation}
\label{eq:k_q_ref_update}
\begin{aligned}
{\mathbf{k}}_{\mathbf{q}}^{(n)} &= \left(\mathbf{I} + \tau\left(\frac{4\pi^2|\mathbf{q}|^2}{L^2}\mathbf{D}+\mathbf{R}\right)\right)^{-1}\left({\mathbf{k}}_{\mathbf{q}}^{(n-1)}+\tau\mathcal{F}[\mathbf{S}({u}^{(n-1)},
{\mathbf{g}}^{(n-1)})]_{\mathbf{q}} \right),\\
{\mathbf{g}}^{(n)}&(\mathbf{x}) = \sum_{\mathbf{q} \in U_{H_0}} {\mathbf{k}}_{\mathbf{q}}^{(n)} e^{i \frac{2\pi}{L} \mathbf{q} \cdot \mathbf{x}}.
\end{aligned}
\end{equation}
\end{definition}

We impose the following assumptions to ensure the convergence of Algorithm~\ref{alg:sipf-pic}.
\begin{assumption}
\label{as}
The solution \(u,\mathbf{g}\) to \eqref{eq:haptotaxis-system} is periodic, that is, for \(q_1,q_2,q_3 \in \mathbf{Z}\),
\begin{equation}
\begin{aligned}
u(x_1+q_1 L,x_2+q_2 L,x_3 + q_3 L) &= u(x_1,x_2,x_3), \\
\mathbf{g}(x_1+q_1 L,x_2+q_2 L,x_3 + q_3 L)    &= \mathbf{g}(x_1,x_2,x_3).
\end{aligned}
\label{eq:periodic}
\end{equation}
The solution \(u\) is bounded, i.e., 
\begin{equation}
\begin{aligned}
|u(\mathbf{x},t)| \leq N_0 \quad \forall t \in \left[0,T\right], \mathbf{x} \in \Omega.
\end{aligned}
\end{equation}
The gradient of solution \(\nabla u\) is square-integrable, i.e.,
\begin{equation}
\begin{aligned}
\int_{\mathbf{x}\in \Omega}|\nabla u(\mathbf{x},t)|^2\,\mathrm{d}\mathbf{x} \leq N_{1,2} \quad \forall t \in \left[0,T\right], \mathbf{x} \in \Omega.
\end{aligned}
\end{equation}
The first to third order spatial derivatives of \(\mathbf{g}\) are bounded, i.e., for any spatial multi-index \( \mathbf{k} \) such that \( |\mathbf{k}| = \eta \) where \(\eta=1,2,3\), it holds that
\begin{equation}
\label{eq:c_smoothness}
|D_{\mathbf{k}} \mathbf{g}(\mathbf{x},t)| \leq M_\eta \quad \forall \mathbf{x} \in \left[-\frac{L}{2}, \frac{L}{2}\right]^3,t \in \left[0,T\right].
\end{equation}
The temporal derivative of \(\nabla \mathbf{g}\) is bounded, that is,
\begin{equation}
\left| \frac{\partial \nabla \mathbf{g}(\mathbf{x},t)}{\partial t} \right| \leq J_1\quad \forall \mathbf{x} \in \left[-\frac{L}{2}, \frac{L}{2}\right]^3,t \in \left[0,T\right].
\end{equation}
\end{assumption}

\begin{remark}
Since the vector $\mathbf{g} = (v; m; w)$, the smoothness conditions of \(\mathbf{g}\) listed in Assumption \ref{as} apply to each individual component \(v,m,w\).
\end{remark}

\subsection{Main Convergence Results}
In this section, we first present a series of lemmas that establish bounds for individual components of the error dynamics, and then prove Theorem \ref{thm:overall}, which constitutes the main convergence result of Algorithm \ref{alg:sipf-pic} for solving system \eqref{eq:haptotaxis-system}.

To avoid potential singularities, we regularize the governing equation for $m$ as follows:
\begin{equation}
\label{eq:m_reg}
m_t = D_m \Delta m - \beta m + K_\sigma \ast u,
\end{equation}
where \(K_\sigma\) is the convolution kernel corresponding to the three-dimensional normal distribution \(N(0,\sigma I_3)\) for some $\sigma \ll 1$.

The temporal error of the Euler--Maruyama semi-discretization is summarized below; see\cite{milstein2004stochastic} for standard strong-convergence results.
\begin{lemma}
\label{lemma:temporal}
Let \(\{(\mathbf{X}_p,a_p)\}_{1 \leq p \leq P}\) be \(P\) independent realizations of SDE \eqref{eq:SDE} with \(\mathbf{X}_p(0)=\widetilde{\mathbf{X}}_p^{(0)},a_p(0)=\tilde{a}_p^{(0)}\), and the Brownian increments \(\mathbf{W}_p\) satisfy
\begin{equation}
\mathbf{W}_p(n\tau) - \mathbf{W}_p((n-1)\tau) = \mathbf{W}_p^{(n)}. 
\end{equation}
The Euler-Maruyama semi-discretization exhibits first-order strong convergence for additive noise, i.e., when the diffusion term is independent of the solution process. More precisely, for \(t = n\tau \leq T\) with integer \(n\geq 1\) and any \(1 \leq p \leq P\),
\begin{equation}
\begin{aligned}
\bigl( \mathbb{E}\bigl[ | \mathbf{X}_p(t) - \widetilde{\mathbf{X}}_p^{(n)} |^2 \bigr] \bigr)^{\frac{1}{2}} &\leq e^{C_0 t} \tau,\\
\left(\mathbb{E}\bigl[ |\log a_p(t) - \log \widetilde{a}_p^{(n)} |^2 \bigr]\right)^\frac{1}{2}&\leq e^{C_0 t} \tau,
\end{aligned}
\end{equation}
where \(C_0\) is a positive constant independent of the time step \(\tau\) and \(t\).
\end{lemma}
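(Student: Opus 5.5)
We compare the auxiliary recursion \eqref{eq:u_tilde_update} with the exact SDE \eqref{eq:SDE} driven by the exact fields $\nabla v$ and $w$. The argument is the standard strong-error analysis for Euler--Maruyama with additive noise. Throughout, we take the fields as given deterministic coefficients. This is legitimate because the auxiliary particles, unlike the algorithm's particles, are driven by the true solution. Write $b(\mathbf{x},t)=\chi\nabla v(\mathbf{x},t)$ and $c(\mathbf{x},t)=\rho(w(\mathbf{x},t))-1$. By Assumption~\ref{as}, $b$ is Lipschitz in $\mathbf{x}$ with constant $\chi M_2$, and $\nabla b$ is bounded by $\chi M_3$. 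Moreover $|\partial_t b|\le \chi J_1$. Since $w\ge0$ and $|\rho'(w)|=2/(1+w)^2\le 2$, $c$ is bounded by $1$ and Lipschitz in $\mathbf{x}$ with constant $2M_1$.

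\textbf{Position error.} Let $e_n=\mathbf{X}_p(t_n)-\widetilde{\mathbf{X}}_p^{(n)}$. Because the Brownian increments coincide, they cancel exactly, and
\[
e_{n}=e_{n-1}+\tau\bigl(b(\mathbf{X}_p(t_{n-1}),t_{n-1})-b(\widetilde{\mathbf{X}}_p^{(n-1)},t_{n-1})\bigr)+r_n,
\]
where $r_n=\int_{t_{n-1}}^{t_n}\bigl(b(\mathbf{X}_p(s),s)-b(\mathbf{X}_p(t_{n-1}),t_{n-1})\bigr)\,\mathrm{d}s$. The key step is to show $\bigl(\mathbb{E}|r_n|^2\bigr)^{1/2}\le C\tau^2$ up to a martingale part, rather than only the naive $C\tau^{3/2}$.

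To do this, expand $b(\mathbf{X}_p(s),s)-b(\mathbf{X}_p(t_{n-1}),t_{n-1})$ by Itô's formula. The dominant piece is $\int_{t_{n-1}}^{s}\nabla b\,\mathrm{d}\mathbf{W}$. Integrated over the step, it gives a term $\xi_n$ with $\mathbb{E}[\xi_n\mid\mathcal F_{t_{n-1}}]=0$ and $\mathbb{E}|\xi_n|^2\le C\tau^3$. The remaining pieces are drift, time-derivative and second-order Itô terms. Using the bounds $M_2,M_3,J_1$, these contribute $O(\tau^2)$ in $L^2$. The Itô correction involves $\Delta b$, i.e.\ third derivatives of $v$, which is exactly where $M_3$ enters. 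If only $M_3$ (not fourth derivatives) is available, we instead bound the second-order remainder of a Taylor expansion of $b$ directly by $\chi M_3|\mathbf{X}_p(s)-\mathbf{X}_p(t_{n-1})|^2$. This is $O(\tau)$ in $L^1$ per unit time, so integrated over the step it again gives $O(\tau^2)$.

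Then expand $\mathbb{E}|e_n|^2$. The martingale increments $\xi_n$ are orthogonal to $\mathcal F_{t_{n-1}}$-measurable quantities, so the cross term $\mathbb{E}[e_{n-1}\cdot\xi_n]$ vanishes. The remaining cross terms are handled by Young's inequality. This yields
\[
\mathbb{E}|e_n|^2\le(1+C\tau)\mathbb{E}|e_{n-1}|^2+C\tau^3,
\]
and the discrete Gronwall inequality gives $\mathbb{E}|e_n|^2\le e^{2C_0t_n}\tau^2$.

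\textbf{Log-weight error.} Taking logarithms in \eqref{eq:u_tilde_update} gives
\[
\log\tilde a_p^{(n)}=\log\tilde a_p^{(n-1)}+\log\bigl(1+\tau c(\widetilde{\mathbf{X}}^{(n-1)}_p,t_{n-1})\bigr).
\]
Since $|c|\le1$, we have $\log(1+\tau c)=\tau c+O(\tau^2)$ uniformly for $\tau\le1/2$. The exact log-weight is $\int c(\mathbf{X}_p(s),s)\,\mathrm{d}s$, so the difference $f_n$ of the log-weights satisfies
\[
f_n=f_{n-1}+\tau\bigl(c(\mathbf{X}_p(t_{n-1}),t_{n-1})-c(\widetilde{\mathbf{X}}^{(n-1)}_p,t_{n-1})\bigr)+\tilde r_n+O(\tau^2).
\]
The remainder $\tilde r_n$ has the same structure as $r_n$, with $b$ replaced by $c$, and is treated identically. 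This uses the bounds on $M_1$–$M_3$ for $w$ and $\partial_t w$; the latter is available from the $w$-equation and Assumption~\ref{as}. Summing over steps, using the Lipschitz bound $|c(\mathbf{x})-c(\mathbf{y})|\le 2M_1|\mathbf{x}-\mathbf{y}|$ and the position estimate, and again cancelling the martingale parts, gives $\bigl(\mathbb{E}|f_n|^2\bigr)^{1/2}\le e^{C_0t}\tau$ after enlarging $C_0$.

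\textbf{Main obstacle.} The crucial step is the order-one (rather than order-$\tfrac12$) bound on the local remainders $r_n$ and $\tilde r_n$. It requires isolating their mean-zero stochastic parts and exploiting orthogonality, and it also requires regularity in time of $\nabla v$ and $w$. The assumed bound $J_1$ on $\partial_t\nabla\mathbf{g}$ provides this for $\nabla v$; for $w$ one reads it off from the bounds on $\nabla\mathbf{g}$. Everything else is routine Gronwall bookkeeping, with constants depending only on $\chi$, $M_1$–$M_3$ and $J_1$, and not on $\tau$ or $t$.
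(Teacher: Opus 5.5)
Your proposal is correct and follows essentially the same route as the paper. The common Brownian increments cancel, and the local error splits into a Lipschitz term $\tau(b(\mathbf{X}_p(t_{n-1}),t_{n-1})-b(\widetilde{\mathbf{X}}_p^{(n-1)},t_{n-1}))$, a conditionally mean-zero stochastic part of $L^2$-size $\tau^{3/2}$, and an $O(\tau^2)$ remainder; the paper isolates the stochastic part by a first-order spatial Taylor expansion rather than It\^o's formula, which is your fallback anyway. A mean-square recursion plus discrete Gronwall then closes the argument, and the same reasoning is repeated for $\log a$.

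You are more explicit than the paper on three points. First, the orthogonality $\mathbb{E}[e_{n-1}\cdot\xi_n]=0$, which the paper's recursion $\mathbb{E}|\boldsymbol{\epsilon}^{(n+1)}|^2\le(1+C\tau)\mathbb{E}|\boldsymbol{\epsilon}^{(n)}|^2+C\tau^3$ tacitly requires. Second, the discrepancy $\log(1+\tau c)=\tau c+O(\tau^2)$. Third, the need for a bound on $\partial_t w$. That bound has to come from the $w$-equation together with sup bounds on $v,w$, as you correctly say earlier, not from the bounds on $\nabla\mathbf{g}$ alone as your final paragraph states.
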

\begin{proof}
It suffices to prove the estimate for one realization, so the index \(p\) is omitted. Let \(t_n=n\tau\),
\(\mu(\mathbf{x},t)=\chi\nabla v(\mathbf{x},t)\), and
\(\boldsymbol{\epsilon}^{(n)}=\mathbf{X}(t_n)-\widetilde{\mathbf{X}}^{(n)}\). The exact and numerical updates over
\([t_n,t_{n+1}]\) are
\begin{equation}
\begin{aligned}
\mathbf{X}(t_{n+1})&=\mathbf{X}(t_n)+\int_{t_n}^{t_{n+1}} \chi \nabla v(\mathbf{X}(t),t) \,\mathrm{d}t + \mathbf{W}(t_{n+1})-\mathbf{W}(t_n),\\
\widetilde{\mathbf{X}}^{(n+1)}&=\widetilde{\mathbf{X}}^{(n)}+\chi \tau \nabla v(\widetilde{\mathbf{X}}^{(n)},t_n) + \mathbf{W}^{(n+1)}.
\end{aligned}
\end{equation}
Since the noise is additive, the Brownian increments cancel in the error equation:
\begin{equation}
\label{eq:epsilon_start}
\boldsymbol{\epsilon}^{(n+1)}=\boldsymbol{\epsilon}^{(n)} + \int_{t_n}^{t_{n+1}} (\mu(\mathbf{X}(t),t) -\mu(\widetilde{\mathbf{X}}^{(n)},t_n))\,\mathrm{d}t.
\end{equation}
We split the integrand as
\begin{equation}
\begin{aligned}
\mu(\mathbf{X}(t),t)-\mu(\widetilde{\mathbf{X}}^{(n)},t_n)
={}&\bigl(\mu(\mathbf{X}(t),t)-\mu(\mathbf{X}(t),t_n)\bigr)
+\bigl(\mu(\mathbf{X}(t),t_n)-\mu(\mathbf{X}(t_n),t_n)\bigr)\\
&+\bigl(\mu(\mathbf{X}(t_n),t_n)-\mu(\widetilde{\mathbf{X}}^{(n)},t_n)\bigr).
\end{aligned}
\end{equation}
The first and third terms are bounded by the first-order temporal and physical smoothness of \(\mu\), giving \(J_1 \tau^2 + M_2 \chi \tau|\boldsymbol{\epsilon}^{(n)}|\).
For the middle term, Taylor expansion in space gives
\begin{equation}
\mu(\mathbf{X}(t),t_n)-\mu(\mathbf{X}(t_n),t_n)
=\mu_\mathbf{x}(\mathbf{X}(t_n),t_n)\bigl(\mathbf{X}(t)-\mathbf{X}(t_n)\bigr)
+O\bigl(|\mathbf{X}(t)-\mathbf{X}(t_n)|^2\bigr).    
\end{equation}
Using
\begin{equation}
\mathbf{X}(t)-\mathbf{X}(t_n)
=\int_{t_n}^{t}\mu(\mathbf{X}(s),s)\,\mathrm{d}s+\mathbf{W}(t)-\mathbf{W}(t_n),
\end{equation}
the drift contribution is \(O(\tau^2)\), while the Brownian part has mean-square size controlled by the standard estimates
\begin{equation}
\mathbb{E}\int_{t_n}^{t_{n+1}}\!|\mathbf{W}(t)-\mathbf{W}(t_n)|^2\,\mathrm{d}t=O(\tau^2),
\qquad
\mathbb{E}\left|\int_{t_n}^{t_{n+1}}\!(\mathbf{W}(t)-\mathbf{W}(t_n))\,\mathrm{d}t\right|^2=O(\tau^3),
\end{equation}
where the constants depend only on the diffusion coefficient and the final time. Therefore, the one-step error can be written as
\begin{equation}
\boldsymbol{\epsilon}^{(n+1)}=\boldsymbol{\epsilon}^{(n)}+R_n+S_n,\qquad
|R_n|\le C\bigl(\tau|\boldsymbol{\epsilon}^{(n)}|+\tau^2\bigr),\qquad
\mathbb{E}|S_n|^2\le C\tau^3,    
\end{equation}
with \(C\) independent of \(\tau\). Hence
\begin{equation}
\mathbb{E}|\boldsymbol{\epsilon}^{(n+1)}|^2
\le (1+C\tau)\mathbb{E}|\boldsymbol{\epsilon}^{(n)}|^2+C\tau^3 .
\end{equation}
Because \(\boldsymbol{\epsilon}^{(0)}=0\), the discrete Gronwall inequality gives \(\mathbb{E}|\boldsymbol{\epsilon}^{(n)}|^2\le C e^{Ct_n}\tau^2,\)
and therefore
\begin{equation}
\left(\mathbb{E}\bigl|\mathbf{X}(t_n)-\widetilde{\mathbf{X}}^{(n)}\bigr|^2\right)^{\frac{1}{2}}
\le e^{C_0t_n}\tau
\end{equation}
after increasing \(C_0\) if necessary. Applying the same argument to the equation for \(\log a\) proves the second estimate.
\end{proof}

Next, we analyze the discrepancy between the particles $(\widehat{\mathbf{X}}_p^{(n)},\hat{a}_p^{(n)})$ maintained by the algorithm and the auxiliary random branches $(\widetilde{\mathbf{X}}_p^{(n)},\tilde{a}_p^{(n)})$. We use the notation \(\mathbf{e}_p^{(n)}:=\widehat{\mathbf{X}}_p^{(n)}-\widetilde{\mathbf{X}}_p^{(n)}\) to denote the global error of the $p$-th particle at the $n$-th step, \(d^{(n)}:= \sum_{p=1}^{P}   \tilde{a}_p^{(n)}\left|\mathbf{e}_p^{(n)}\right|\) to denote the weighted absolute error, \(h^{(n)}:=\sum_{p=1}^{P}\left|\hat{a}_p^{(n)}-\tilde{a}_p^{(n)}\right|\) to denote the particle weight error, and \(c^{(n)}:=\\\max\left\{\int_\Omega \left|\nabla \hat{v}^{(n)}(\mathbf{x})-\nabla v^{(n)}(\mathbf{x})\right|\,\mathrm{d}\mathbf{x},\int_\Omega \left|\hat{w}^{(n)}(\mathbf{x})- w^{(n)}(\mathbf{x})\right|\,\mathrm{d}\mathbf{x}\right\}\) to denote the error of the oxygen concentration \(w\) in \(L^1\) norm and the ECM concentration \(v\) in \(W^{1,1}\)-norm. In the following lemmas, we show that the single-step growth rates of \(d^{(n)}\) and \(h^{(n)}\) are controlled by \(c^{(n)}\), while \(c^{(n)}\) itself is controlled by \(d^{(m)}\) and \(h^{(m)}\) for \(m < n\), thus allowing the use of Gronwall's inequality.
\begin{lemma}
\label{lemma:d_estimate}
The weighted absolute error \(d^{(n+1)}\) is controlled by \(d^{(n)}\) and the concentration error \(c^{(n)}\), for some \(K_2\) independent of \(L,H,P,\tau\) mentioned in Theorem \ref{thm:pic_error}:
\begin{equation}
\mathbb{E}[d^{(n+1)}] \leq (1+\tau+M_2 \chi \tau)\mathbb{E}[d^{(n)}]+e^{n\tau}K_2 \chi \tau \left(\frac{L}{H}\right)^2 + N_0\chi \tau \mathbb{E}[c^{(n)}].
\end{equation}
\end{lemma}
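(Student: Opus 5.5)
Subtracting the two position updates, \eqref{eq:u_hat_update} and \eqref{eq:u_tilde_update}, the shared Brownian increments cancel and we get
\[
\mathbf{e}_p^{(n+1)}=\mathbf{e}_p^{(n)}+\chi\tau\bigl(\nabla\hat v_H^{(n)}(\widehat{\mathbf{X}}_p^{(n)})-\nabla v^{(n)}(\widetilde{\mathbf{X}}_p^{(n)})\bigr),
\]
where $\nabla v^{(n)}$ denotes $\nabla v(\cdot,n\tau)$. Multiplying by $\tilde a_p^{(n+1)}$ and using Lemma~\ref{lemma:mass} (which applies verbatim to the auxiliary weights, since $w\ge 0$) gives $\tilde a_p^{(n+1)}\le(1+\tau)\tilde a_p^{(n)}$. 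Summing over $p$ then yields
\[
d^{(n+1)}\le(1+\tau)\Bigl(d^{(n)}+\chi\tau\sum_p\tilde a_p^{(n)}\bigl|\nabla\hat v_H^{(n)}(\widehat{\mathbf{X}}_p^{(n)})-\nabla v^{(n)}(\widetilde{\mathbf{X}}_p^{(n)})\bigr|\Bigr).
\]
The factor $(1+\tau)$ on the drift term is absorbed into harmless constants; it produces only $O(\tau^2)$ corrections, or we can simply keep $(1+\tau)$ on $d^{(n)}$ and bound $1+\tau\le 2$ elsewhere. This is the source of the $(1+\tau)$ in the claimed factor.

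I would split the drift difference into three pieces:
\[
\underbrace{\nabla\hat v_H^{(n)}(\widehat{\mathbf{X}}_p)-\nabla\hat v^{(n)}(\widehat{\mathbf{X}}_p)}_{\text{(I) PIC interpolation}}+\underbrace{\nabla\hat v^{(n)}(\widehat{\mathbf{X}}_p)-\nabla v^{(n)}(\widehat{\mathbf{X}}_p)}_{\text{(II) field error}}+\underbrace{\nabla v^{(n)}(\widehat{\mathbf{X}}_p)-\nabla v^{(n)}(\widetilde{\mathbf{X}}_p)}_{\text{(III) Lipschitz}}.
\]
For (III), the bound $|D^2 v|\le M_2$ from Assumption~\ref{as} gives at most $M_2|\mathbf{e}_p^{(n)}|$; after weighting and summing this contributes $M_2\chi\tau d^{(n)}$. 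For (I), Theorem~\ref{thm:pic_error} gives the pointwise bound $K_2(L/H)^2$. Weighting by $\tilde a_p^{(n)}$ and using $\sum_p\tilde a_p^{(n)}\le M_0e^{n\tau}$ (the analogue of Corollary~\ref{corollary:mass}) yields the term $e^{n\tau}K_2\chi\tau(L/H)^2$, with $M_0$ absorbed into $K_2$ or normalized.

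Term (II) is the main obstacle. We must turn a pointwise field error evaluated at random particle locations into the $L^1$ quantity $c^{(n)}$. First I would replace $\widehat{\mathbf{X}}_p$ by $\widetilde{\mathbf{X}}_p$ in (II). The resulting discrepancy is controlled by Lipschitz bounds on $\nabla v^{(n)}$ and on the filtered field $\nabla\hat v^{(n)}$. The filtered field is smooth because of the low-pass filter with $H_0$, and I would assume its Hessian is bounded comparably to $M_2$. This produces a further multiple of $\tau d^{(n)}$, which I would fold into the $M_2\chi\tau$ coefficient. Next I would take expectations. Since $(\widetilde{\mathbf{X}}_p^{(n)},\tilde a_p^{(n)})$ follow the mean-field dynamics with given reference fields, their weighted law has density $u^{(n)}/P$ per particle. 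Thus
\[
\mathbb{E}\sum_p\tilde a_p^{(n)}f(\widetilde{\mathbf{X}}_p^{(n)})=\int_\Omega f\,u^{(n)}\,\mathrm d\mathbf{x}\le N_0\int_\Omega|f|\,\mathrm d\mathbf{x}
\]
for $f=|\nabla\hat v^{(n)}-\nabla v^{(n)}|$, using the bound $u\le N_0$ from Assumption~\ref{as}, and this gives $N_0\chi\tau\,\mathbb{E}[c^{(n)}]$. The subtle point is that $\hat v^{(n)}$ depends on all particles, including $\widetilde{\mathbf{X}}_p^{(n)}$ itself through the shared noise. So strict independence fails. I would handle this either by conditioning on $\hat v^{(n)}$ and arguing that dependence on a single particle is $O(1/P)$, which is negligible, or by bounding through $\sup$-over-fields arguments. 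In the written proof I expect this step to be justified by the i.i.d.\ structure of the auxiliary branches, noting that the law of each $\widetilde{\mathbf{X}}_p^{(n)}$ is determined independently of the algorithm's fields. Collecting the three contributions gives the stated inequality.
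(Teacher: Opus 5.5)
Your proposal follows the paper's proof in all essentials. You subtract the position updates and use $\tilde a_p^{(n+1)}\le(1+\tau)\tilde a_p^{(n)}$. You then split the drift difference into a PIC-interpolation error (Theorem~\ref{thm:pic_error}), a Lipschitz term, and a field error at the auxiliary particles, which becomes $N_0\,\mathbb{E}[c^{(n)}]$ via the i.i.d.\ structure and $u\le N_0$.

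The one real difference is the order of the split, and it costs you the stated constant. The paper writes
\[
\nabla\hat v_H^{(n)}(\widehat{\mathbf{X}}_p^{(n)})-\nabla v^{(n)}(\widetilde{\mathbf{X}}_p^{(n)})
=\bigl[\nabla\hat v_H^{(n)}-\nabla\hat v^{(n)}\bigr](\widehat{\mathbf{X}}_p^{(n)})
+\bigl[\nabla\hat v^{(n)}(\widehat{\mathbf{X}}_p^{(n)})-\nabla\hat v^{(n)}(\widetilde{\mathbf{X}}_p^{(n)})\bigr]
+\bigl[\nabla\hat v^{(n)}-\nabla v^{(n)}\bigr](\widetilde{\mathbf{X}}_p^{(n)}).
\]
The field error is therefore evaluated directly at $\widetilde{\mathbf{X}}_p^{(n)}$. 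A single Lipschitz bound, on $\nabla\hat v^{(n)}$, then gives the stated coefficient $M_2\chi\tau$, modulo the $O(\tau^2)$ cross term that both you and the paper discard.

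You instead evaluate the field error at $\widehat{\mathbf{X}}_p^{(n)}$ and then transport it. Because you bound (II) and (III) separately in absolute value, you pay the Lipschitz constant of $\nabla v^{(n)}$ twice. You end up with roughly $(2M_2+\sup|D^2\hat v^{(n)}|)\chi\tau\,d^{(n)}$, which cannot be ``folded into'' $M_2\chi\tau$. This does not matter for the Gronwall step in Theorem~\ref{thm:overall}, where only a factor $1+K_1\tau$ is used. For the inequality as stated, however, you should drop your term (III). The $\nabla v^{(n)}$-terms it introduces cancel against your transport of (II) before absolute values are taken, and what remains is exactly the paper's decomposition.

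On the two issues you flagged. First, both routes need a bound on $D^2\hat v^{(n)}$. The paper attributes it to Assumption~\ref{as}, but that assumption concerns the exact field $\mathbf{g}$, so you are right to state it as an extra hypothesis.

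Second, $\hat v^{(n)}$ depends on the same Brownian increments that drive $\widetilde{\mathbf{X}}_p^{(n)}$, and the paper does not address this either. It simply invokes the i.i.d.\ property and writes $\mathbb{E}[Q_n]=\mathbb{E}\int_\Omega u^{(n+1)}|\nabla\hat v^{(n)}-\nabla v^{(n)}|\,\mathrm{d}\mathbf{x}$, treating $\hat v^{(n)}$ as independent of the auxiliary particles. Your proposal is therefore as complete as the paper's at this point. Note, though, that you do not carry out either repair you suggest. Either one (leave-one-out decoupling, or a uniform law of large numbers over the band-limited fields) would in general add an extra error term, such as a single-particle influence term or a fluctuation term, that does not appear in the stated inequality.
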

\begin{proof}
By comparing the update rules of \(\widehat{\mathbf{X}}^{(n)}\) \eqref{eq:u_hat_update} and \(\widetilde{\mathbf{X}}^{(n)}\) \eqref{eq:u_tilde_update}, we get:
\begin{equation}
\mathbf{e}_p^{(n+1)} = \mathbf{e}_p^{(n)} + \chi \tau \left(\nabla_H\hat{v}^{(n)}(\widehat{\mathbf{X}}_p^{(n)})-\nabla v^{(n)}(\widetilde{\mathbf{X}}_p^{(n)})\right).
\end{equation}
Use the weighted sum with the masses $a_p$ and perform the following decomposition:
\begin{equation}
\label{eq:e_update}
\begin{aligned}
d^{(n+1)}&= \sum_{p=1}^{P}   \tilde{a}_p^{(n+1)}\left|\mathbf{e}_p^{(n+1)}\right|\\
&\leq \sum_{p=1}^{P}   \tilde{a}_p^{(n+1)}\left(\left|\mathbf{e}_p^{(n)}\right|+\chi \tau\left|\nabla_H \hat{v}^{(n)}(\widehat{\mathbf{X}}_p^{(n)})-\nabla \hat{v}^{(n)}(\widehat{\mathbf{X}}_p^{(n)})\right|\right.\\
&\quad+\chi \tau\left.\left|\nabla \hat{v}^{(n)}(\widehat{\mathbf{X}}_p^{(n)})-\nabla \hat{v}^{(n)}(\widetilde{\mathbf{X}}_p^{(n)})\right|+\chi \tau\left|\nabla \hat{v}^{(n)}(\widetilde{\mathbf{X}}_p^{(n)})-\nabla v^{(n)}(\widetilde{\mathbf{X}}_p^{(n)})\right|\right).
\end{aligned}
\end{equation}
By Theorem \ref{thm:pic_error}, for some \(K_2\) independent of \(L,H,P,\tau\), we have:
\begin{equation}
\label{eq:e_update_start}
\left|\nabla_H \hat{v}^{(n)}(\widehat{\mathbf{X}}_p^{(n)})-\nabla \hat{v}^{(n)}(\widehat{\mathbf{X}}_p^{(n)})\right| \leq K_2 \left(\frac{L}{H}\right)^2.
\end{equation}
By Assumption \ref{as} which states that the second derivatives of \(\hat{v}\) are bounded by \(M_2\), we have:
\begin{equation}
\left|\nabla \hat{v}^{(n)}(\widehat{\mathbf{X}}_p^{(n)})-\nabla \hat{v}^{(n)}(\widetilde{\mathbf{X}}_p^{(n)})\right|\leq M_2 \left|\widehat{\mathbf{X}}_p^{(n)}-\widetilde{\mathbf{X}}_p^{(n)}\right|=M_2\left|\mathbf{e}_p^{(n)}\right|.
\end{equation}
For the last part \(Q_n:=\sum_{p=1}^P\tilde{a}_p^{(n+1)}\left|\nabla \hat{v}^{(n)}(\widetilde{\mathbf{X}}_p^{(n)})-\nabla v^{(n)}(\widetilde{\mathbf{X}}_p^{(n)})\right|\), we need to use the property that \(\widetilde{\mathbf{X}}_p^{(n)}\) are independently and identically distributed, and use the difference in the \(L^1\)-norms of \(\nabla \hat{v}^{(n)}\) and \(\nabla v^{(n)}\) to bound it:
\begin{equation}
\label{eq:e_update_end}
\begin{aligned}
\mathbb{E}[Q_n]&=\mathbb{E}\left[\sum_{p=1}^P\tilde{a}_p^{(n+1)}\left|\nabla \hat{v}^{(n)}(\widetilde{\mathbf{X}}_p^{(n)})-\nabla v^{(n)}(\widetilde{\mathbf{X}}_p^{(n)})\right|\right]\\
&= \mathbb{E}\left[\int_\Omega u^{(n+1)}(\mathbf{x})\left|\nabla \hat{v}^{(n)}(\mathbf{x})-\nabla v^{(n)}(\mathbf{x})\right| \,\mathrm{d}\mathbf{x}\right]\\
&\leq \left\|u^{(n+1)}\right\|_\infty\mathbb{E}\left[  \left\|\nabla \hat{v}^{(n)}-\nabla v^{(n)}\right\|_1 \right] \leq N_0\,\mathbb{E}[c^{(n)}].
\end{aligned}
\end{equation}
Substituting \eqref{eq:e_update_start}-\eqref{eq:e_update_end} into \eqref{eq:e_update} and applying Lemma \ref{lemma:mass} and Corollary \ref{corollary:mass}, we obtain:
\begin{equation}
\begin{aligned}
\mathbb{E}[d^{(n+1)}] \leq (1+\tau+M_2 \chi \tau)\mathbb{E}[d^{(n)}]+e^{n\tau}K_2 \chi \tau \left(\frac{L}{H}\right)^2 + N_0\chi \tau \mathbb{E}[c^{(n)}],
\end{aligned}
\end{equation}
which is the desired result.
\end{proof}
\begin{lemma}
The particle weight error \(h^{(n+1)}\) is controlled by \(h^{(n)}\), the weighted absolute error \(d^{(n)}\) and the concentration error \(c^{(n)}\), for some \(K_2\) independent of \(L,H,P,\tau\) mentioned in Theorem \ref{thm:pic_error}:
\begin{equation}
\label{lemma:h_estimate}
\mathbb{E}[h^{(n+1)}] \leq (1+\tau)\mathbb{E}[h^{(n)}]+2M_0\tau e^{n\tau}\left(M_1  \mathbb{E}[d^{(n)}]+K_2 \left(\frac{L}{H}\right)^2 \right)+ 2 N_0 \tau \mathbb{E}[c^{(n)}].
\end{equation}
\end{lemma}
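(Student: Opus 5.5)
We will compare the weight update rules \eqref{eq:u_hat_update} and \eqref{eq:u_tilde_update} directly, mirroring the proof of Lemma~\ref{lemma:d_estimate}. Writing $\hat\rho_p:=\rho(\hat w_H^{(n)}(\widehat{\mathbf{X}}_p^{(n)}))$ and $\tilde\rho_p:=\rho(w^{(n)}(\widetilde{\mathbf{X}}_p^{(n)}))$, we have
\begin{equation*}
\hat a_p^{(n+1)}-\tilde a_p^{(n+1)} = (\hat a_p^{(n)}-\tilde a_p^{(n)})(1+\tau\hat\rho_p-\tau) + \tau\,\tilde a_p^{(n)}(\hat\rho_p-\tilde\rho_p).
\end{equation*}
Since $0\le\rho\le 2$ (using the positivity of $\hat w$ from the positivity-preserving lemma, with interpolation errors absorbed), we get $|1+\tau\hat\rho_p-\tau|\le 1+\tau$. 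Summing over $p$ then gives $h^{(n+1)}\le(1+\tau)h^{(n)}+\tau\sum_p\tilde a_p^{(n)}|\hat\rho_p-\tilde\rho_p|$. Because $\rho'(w)=2/(1+w)^2\le 2$, $\rho$ is $2$-Lipschitz. This is where the factors $2$ in the statement come from: $|\hat\rho_p-\tilde\rho_p|\le 2|\hat w_H^{(n)}(\widehat{\mathbf{X}}_p^{(n)})-w^{(n)}(\widetilde{\mathbf{X}}_p^{(n)})|$.

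We split this difference into three pieces, as in \eqref{eq:e_update}.
\begin{itemize}
\item The interpolation error $|\hat w_H^{(n)}-\hat w^{(n)}|$ at $\widehat{\mathbf{X}}_p^{(n)}$ is bounded by $K_2(L/H)^2$ via Theorem~\ref{thm:pic_error}.
\item The spatial shift $|\hat w^{(n)}(\widehat{\mathbf{X}}_p^{(n)})-\hat w^{(n)}(\widetilde{\mathbf{X}}_p^{(n)})|$ is bounded by $M_1|\mathbf{e}_p^{(n)}|$ using the gradient bound of Assumption~\ref{as}.
\item The field error $|\hat w^{(n)}-w^{(n)}|$ is evaluated at $\widetilde{\mathbf{X}}_p^{(n)}$.
\end{itemize}

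For the first two pieces, multiplying by $\tilde a_p^{(n)}$ and summing gives $\sum_p\tilde a_p^{(n)}\le M_0e^{n\tau}$ by Lemma~\ref{lemma:mass} and Corollary~\ref{corollary:mass}. For the shift term, the bound $\sum_p\tilde a_p^{(n)}M_1|\mathbf{e}_p^{(n)}| = M_1 d^{(n)}$ is already sharper than needed, and we may harmlessly enlarge it to $M_0e^{n\tau}M_1d^{(n)}$, assuming $M_0e^{n\tau}\ge1$ or adjusting constants. Together these produce $2M_0\tau e^{n\tau}\bigl(M_1\mathbb{E}[d^{(n)}]+K_2(L/H)^2\bigr)$.

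The third piece is the main step. Here we will use that the auxiliary pairs $(\widetilde{\mathbf{X}}_p^{(n)},\tilde a_p^{(n)})$ are i.i.d.\ with mass distribution $\phi^{(n)}$ and are independent of the function $\hat w^{(n)}-w^{(n)}$. Conditioning on the field, exactly as in \eqref{eq:e_update_end}, this gives
\begin{equation*}
\mathbb{E}\Big[\sum_p\tilde a_p^{(n)}|\hat w^{(n)}-w^{(n)}|(\widetilde{\mathbf{X}}_p^{(n)})\Big] = \mathbb{E}\int_\Omega u^{(n)}|\hat w^{(n)}-w^{(n)}|\,\mathrm{d}\mathbf{x} \le N_0\,\mathbb{E}[c^{(n)}],
\end{equation*}
using $\|u^{(n)}\|_\infty\le N_0$ and the definition of $c^{(n)}$. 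Including the Lipschitz factor $2$ and the step $\tau$ yields $2N_0\tau\mathbb{E}[c^{(n)}]$. Taking expectations of the whole inequality then completes the proof.

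The delicate point is this independence/identity step. The field $\hat w^{(n)}$ depends on the algorithm's particles, which share Brownian increments with the auxiliary branches, so exact independence fails. We would handle it the same way Lemma~\ref{lemma:d_estimate} does: treat the equality as the paper's standing mean-field heuristic, or bound $\mathbb{E}[c^{(n)}]$ in a form that already accounts for this coupling.
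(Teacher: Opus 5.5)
Your proposal is correct and follows essentially the same route as the paper. The paper also compares the two weight updates, uses $|\rho(w)-1|\le 1$ to get $(1+\tau)h^{(n)}$, applies the $2$-Lipschitz bound on $\rho$, splits into the PIC interpolation error $K_2(L/H)^2$, the $M_1$-shift term and the field error at $\widetilde{\mathbf{X}}_p^{(n)}$, and bounds the last by $N_0\,\mathbb{E}[c^{(n)}]$ via the same i.i.d.\ identity as in Lemma~\ref{lemma:d_estimate}.

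Both caveats you raise are also present in the paper's own argument. Its proof likewise yields only $2M_1 d^{(n)}$ for the shift term, so the extra factor $M_0e^{n\tau}$ in the statement is never derived there. It also asserts the i.i.d.\ identity without addressing how $\hat w^{(n)}$ depends on the Brownian increments shared with the auxiliary branches.
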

\begin{proof}
By comparing the update rules of \(\hat{a}^{(n)}\) \eqref{eq:u_hat_update} and \(\tilde{a}^{(n)}\) \eqref{eq:u_tilde_update}, we get:
\begin{equation}
\label{eq:a_update}
h^{(n+1)}\leq h^{(n)}+\tau \sum_{p=1}^P \left|\hat{a}_p^{(n)}\left(\rho(\hat{w}_H^{(n)}(\widehat{\mathbf{X}}_p^{(n)}))-1\right)-\tilde{a}_p^{(n)}\left(\rho(w^{(n)}(\widetilde{\mathbf{X}}_p^{(n)}))-1\right)\right|.
\end{equation}
Since \(|\rho(w_1)-\rho(w_2)| \leq 2|w_1 - w_2|\) holds for \(\rho(w)=\frac{2w}{1+w}\) and any \(w_1,w_2>0\), we have:
\begin{equation}
\label{eq:a_update_start}
\begin{aligned}
&\sum_{p=1}^P \left|\tilde{a}_p^{(n)}\left(\rho(\hat{w}^{(n)}(\widehat{\mathbf{X}}_p^{(n)}))-1\right)-\tilde{a}_p^{(n)}\left(\rho(w^{(n)}(\widetilde{\mathbf{X}}_p^{(n)}))-1\right)\right| \\
\leq \, &2 \sum_{p=1}^P \left|\tilde{a}_p^{(n)} \left(\hat{w}^{(n)}(\widehat{\mathbf{X}}_p^{(n)})-w^{(n)}(\widetilde{\mathbf{X}}_p^{(n)})\right)\right| \\
\leq \, & 2\sum_{p=1}^P \left(\tilde{a}_p^{(n)} M_1\left| \widehat{\mathbf{X}}_p^{(n)}-\widetilde{\mathbf{X}}_p^{(n)}\right| + \left|\tilde{a}_p^{(n)} \left(\hat{w}^{(n)}(\widetilde{\mathbf{X}}_p^{(n)})-w^{(n)}(\widetilde{\mathbf{X}}_p^{(n)})\right)\right|\right).
\end{aligned}
\end{equation}
By Theorem \ref{thm:pic_error}, for some \(K_2\) independent of \(L,H,P,\tau\), we have:
\begin{equation}
\begin{aligned}
&\sum_{p=1}^P \left|\tilde{a}_p^{(n)}\left(\rho(\hat{w}_H^{(n)}(\widehat{\mathbf{X}}_p^{(n)}))-1\right)-\tilde{a}_p^{(n)}\left(\rho(\hat{w}^{(n)}(\widehat{\mathbf{X}}_p^{(n)}))-1\right)\right|\\ \leq \,&2\sum_{p=1}^P \tilde{a}_p^{(n)} K_2 \left(\frac{L}{H}\right)^2 \leq 2 e^{n\tau} M_0 K_2 \left(\frac{L}{H}\right)^2.
\end{aligned}
\end{equation}
Since  \(|\rho(w)-1|\leq 1\) holds for any \(w>0\), we have:
\begin{equation}
\begin{aligned}
&\sum_{p=1}^P \left|\hat{a}_p^{(n)}\left(\rho(\hat{w}_H^{(n)}(\widehat{\mathbf{X}}_p^{(n)}))-1\right)-\tilde{a}_p^{(n)}\left(\rho(\hat{w}_H^{(n)}(\widehat{\mathbf{X}}_p^{(n)}))-1\right)\right|\\ \leq \,&\sum_{p=1}^P \left|\hat{a}_p^{(n)} - \tilde{a}_p^{(n)}\right| = h^{(n)}.
\end{aligned}
\end{equation}
Using the property that \(\widetilde{\mathbf{X}}_p^{(n)}\) are distributed independently and identically, we obtain:
\begin{equation}
\label{eq:a_update_end}
\begin{aligned}
&\mathbb{E}\left[\sum_{p=1}^P \left|\tilde{a}_p^{(n)} \left(\hat{w}^{(n)}(\widetilde{\mathbf{X}}_p^{(n)})-w^{(n)}(\widetilde{\mathbf{X}}_p^{(n)})\right)\right|\right]\\
=\,&\mathbb{E}\left[\int_\Omega u^{(n)}(\mathbf{x})\left|\hat{w}^{(n)}(\mathbf{x})-w^{(n)}(\mathbf{x})\right| \,\mathrm{d}\mathbf{x}\right]\\
\leq\,& \left\|u^{(n)}\right\|_\infty \mathbb{E}\left[\left\|\hat{w}^{(n)}-w^{(n)}\right\|_1\right] \leq N_0\,\mathbb{E}[c^{(n)}].
\end{aligned}
\end{equation}
Substituting \eqref{eq:a_update_start}-\eqref{eq:a_update_end} into \eqref{eq:a_update}, we obtain:
\begin{equation}
\mathbb{E}[h^{(n+1)}] \leq (1+\tau)\mathbb{E}[h^{(n)}]+2M_0\tau e^{n\tau}\left(M_1  \mathbb{E}[d^{(n)}]+K_2 \left(\frac{L}{H}\right)^2 \right)+ 2 N_0 \tau \mathbb{E}[c^{(n)}],
\end{equation}
which is the desired result.
\end{proof}

\begin{lemma}
\label{lemma:c_estimate}
The concentration error \(c^{(n)}\) is controlled by the weighted absolute error \(d^{(n)}\) and the particle weight error \(h^{(n)}\):
\begin{equation}
\begin{aligned}
c^{(n)} \leq&  C_5e^{2n\tau}\left( H_0^\frac{9}{2} H^{-4} L+H_0^{\frac{1}{2}}P^{-\frac{1}{2}}L +H_0^{-2}L^\frac{7}{2}\right)\\
&+2 n\tau \cdot \tau \sum_{0\leq s < n} c^{(s)} +  \sum_{0\leq s < n}\left(d^{(s)}F^{(s,n)}+h^{(s)}G^{(s,n)}\right),
\end{aligned}
\end{equation}
where the nonnegative coefficients \(F^{(s,n)},G^{(s,n)}\) satisfy\\ \(\max\left\{\sum_{s=0}^{n-1} F^{(s,n)},\sum_{s=0}^{n-1} G^{(s,n)}\right\}\leq C_6(n\tau+\sqrt{n\tau})\), and \(C_5,C_6\) are constants independent of \(L,H,P,\tau\), specified in \eqref{eq:c5c6}.
\end{lemma}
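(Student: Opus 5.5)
The plan is to insert the auxiliary concentration $\tilde{\mathbf{g}}^{(n)}$ of Definition~\ref{def:br_cp} and split
\[
\hat{\mathbf{g}}^{(n)}-\mathbf{g}^{(n)}=(\hat{\mathbf{g}}^{(n)}-\tilde{\mathbf{g}}^{(n)})+(\tilde{\mathbf{g}}^{(n)}-\mathbf{g}^{(n)}).
\]
I would estimate both pieces in the $L^1$ norm, for $\hat w-w$ and for $\nabla\hat v-\nabla v$. Both are unrolled from the linear recursions \eqref{eq:k_q_update}, \eqref{eq:k_q_tilde_update} and \eqref{eq:k_q_ref_update} in Fourier space. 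The diagonal multipliers $\widehat{\mathbf{K}}_\tau$ and $(\mathbf{I}+\tau(\cdot))^{-1}$ have modulus at most one, and the gap between the implicit multiplier and the exponential kernel $\mathbf{K}_\tau$ is $O(\tau^2|\mathbf{q}|^4)$ per step. Lemma~\ref{lemma:kernel} makes the additional aliasing discrepancy exponentially small. Each error therefore becomes $\tau\sum_{s<n}$ of source-term discrepancies propagated by products of such multipliers. To pass from Fourier coefficients to $L^1$ norms I would use Parseval on $\Omega$ together with Cauchy--Schwarz over $U_{H_0}$. The low-pass filter $e^{-2\pi^2|\mathbf{q}|^2H_0^{-2}}$ contributes the factors $H_0^{3/2}$, and $|\mathbf{q}|\le H_0$ for the gradient; this is the origin of the powers of $H_0$ and $L$ in the $C_5$ term.

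The sampling term $\tilde{\mathbf{g}}-\mathbf{g}$ comes from $\mathcal{F}[\tilde u^{(s)}]_{\mathbf{q}}-\mathcal{F}[u^{(s)}]_{\mathbf{q}}$. Because the pairs $(\widetilde{\mathbf{X}}_p^{(s)},\tilde a_p^{(s)})$ are i.i.d. with weights bounded by $M_0e^{s\tau}/P$ (Lemma~\ref{lemma:mass}), each coefficient is an average of $P$ bounded, independent, mean-zero terms. Its variance is therefore at most $C e^{2s\tau}P^{-1}L^{-6}$. Summing over $|\mathbf{q}|\lesssim H_0$ with the filter weights gives the contribution $H_0^{1/2}P^{-1/2}L e^{2n\tau}$. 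The truncation of $\mathbf{g}^{(n)}$ to $U_{H_0}$ versus the exact $\mathbf{g}$ is controlled through the smoothness in Assumption~\ref{as} and the bound on $\|\nabla u\|_2$, which gives decay like $|\mathbf{q}|^{-2}$ and hence the $H_0^{-2}L^{7/2}$ term. The regularization \eqref{eq:m_reg} enters here. It makes $K_\sigma\ast u$ smooth, so the $m$-source and the nonlinear terms $\alpha mv$ and $\eta(u)w$ are Lipschitz in the relevant quantities, and the i.i.d. sampling estimate applies to them as well.

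The algorithmic term $\hat{\mathbf{g}}-\tilde{\mathbf{g}}$ compares sources built from $\hat u^{(s)}$ via $\mathcal{F}_H$ with sources built from $\tilde u^{(s)}$ via $\mathcal{F}$. First, Theorem~\ref{thm:pic_error} bounds $\mathcal{F}_H-\mathcal{F}$ by $C_4L^{-3}|\mathbf{q}|^4H^{-4}$ up to powers of $2\pi$. Summing this with the filter yields the $H_0^{9/2}H^{-4}L$ term. Second, the particle discrepancy is split as
\[
\mathcal{F}[\hat u^{(s)}-\tilde u^{(s)}]_{\mathbf{q}}
=L^{-3}\sum_p\hat a_p\bigl(e^{-i\mathbf{q}\cdot\widehat{\mathbf{X}}_p}-e^{-i\mathbf{q}\cdot\widetilde{\mathbf{X}}_p}\bigr)
+L^{-3}\sum_p(\hat a_p-\tilde a_p)e^{-i\mathbf{q}\cdot\widetilde{\mathbf{X}}_p},
\]
with the frequency factor $2\pi/L$ understood in the exponents. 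The first sum is bounded by $|\mathbf{q}|\,d^{(s)}$ up to a mass ratio controlled by Lemma~\ref{lemma:mass}, and the second by $h^{(s)}$. Third, the reaction terms involving $\hat{\mathbf{g}}^{(s)}-\tilde{\mathbf{g}}^{(s)}$ are Lipschitz with constants from Assumption~\ref{as}, using $\eta\le 2$ and $\alpha$. Those are the terms I would bound in terms of $c^{(s)}$ by pushing them through the recursion, producing $2n\tau\cdot\tau\sum_{s<n}c^{(s)}$.

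I expect the main obstacle to be obtaining the coefficients $F^{(s,n)}$ and $G^{(s,n)}$ with the summed bound $C_6(n\tau+\sqrt{n\tau})$. The difficulty is that the naive factor $|\mathbf{q}|$ multiplying $d^{(s)}$, summed over $U_{H_0}$, blows up with $H_0$. The first thing I would try is to exploit the heat-kernel smoothing: the propagated multiplier $\prod e^{-4\pi^2|\mathbf{q}|^2D(n-s)\tau/L^2}$ absorbs one derivative at cost $((n-s)\tau)^{-1/2}$, and $\tau\sum_s((n-s)\tau)^{-1/2}\lesssim\sqrt{n\tau}$. The $v$-component has no diffusion, so there I would instead write $\nabla v$ through the ODE $v_t=-\alpha mv$. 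Its gradient involves $\nabla m$, whose error is smoothed in this way, while the $n\tau$ part collects the remaining undamped contributions. Finally, I would collect all constants into $C_5$ and $C_6$ as in \eqref{eq:c5c6}.
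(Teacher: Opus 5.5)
Your decomposition matches the paper's. There is a PIC part (Theorem~\ref{thm:pic_error} summed against the factors $\kappa_{\mathbf{q}}$, then Parseval), a particle-offset part linear in $d^{(s)}$ and $h^{(s)}$, and a sampling/truncation part, with $\nabla v$ handled through $\log v^{(n)}=-\alpha\tau\sum_{s<n}m^{(s)}$. The gap is at the step you flag yourself, and your proposed remedy does not close it. In your plan the offset term stays in Fourier space and is converted to $L^1$ by Parseval and Cauchy--Schwarz. There the heat factor only helps in $\ell^2$ over the three-dimensional lattice: with $c=4\pi^2D_m/L^2$ and $t=(n-s)\tau$,
\[
\Bigl(\sum_{\mathbf{q}}|\mathbf{q}|^2e^{-2c|\mathbf{q}|^2t}\Bigr)^{1/2}\sim t^{-5/4},
\]
the extra $t^{-3/4}$ being the $L^2$ norm of the heat kernel. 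Hence $\tau\sum_s((n-s)\tau)^{-5/4}\sim\tau^{-1/4}$, or $\sim H_0^{1/2}$ once the sum is cut at $|\mathbf{q}|\le H_0$. For $\nabla m$ it is worse: $t^{-7/4}$, or $H_0^{3/2}$. So $F^{(s,n)}$ cannot satisfy the bound $C_6(n\tau+\sqrt{n\tau})$ with $C_6$ independent of $H$ and $\tau$. The rate $((n-s)\tau)^{-1/2}$ you quote is the $L^1\to L^1$ cost of one derivative on a Gaussian, and you only get it in physical space. That is how the paper proceeds. It writes
\[
\hat m^{(n+1)}-\tilde m^{(n+1)}=\sum_{0\le s\le n}\tau e^{-\beta(n+1-s)\tau}\,(\hat u^{(s)}-\tilde u^{(s)})\ast K_{(n+1-s)\tau D_m+\sigma}
\]
and then applies, particle by particle with weights $\tilde a_p^{(s)}$, the coupling bounds $\|\delta_{\mathbf{X}_1}\ast K_\epsilon\|_1=1$ and $\|(\delta_{\mathbf{X}_1}-\delta_{\mathbf{X}_2})\ast K_\epsilon\|_1\le4|\mathbf{X}_1-\mathbf{X}_2|\max\{\epsilon^{-1/2},1\}$. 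This gives $h^{(s)}$ and $d^{(s)}$ with time-integrable coefficients.

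Even in physical space, absorbing one derivative is not enough. The quantity $c^{(n)}$ contains $\|\nabla\hat v^{(n)}-\nabla v^{(n)}\|_1$. Through $\nabla\log v^{(n)}=-\alpha\tau\sum_{s<n}\nabla m^{(s)}$, this needs the $d$-coefficient of $\nabla\hat m-\nabla\tilde m$, which puts two derivatives on the kernel: $\|(\delta_{\mathbf{X}_1}-\delta_{\mathbf{X}_2})\ast\nabla K_\epsilon\|_1\le4|\mathbf{X}_1-\mathbf{X}_2|\max\{\epsilon^{-1},1\}$. With $\epsilon=j\tau D_m$, the sum $\tau\sum_{j\ge1}(j\tau D_m)^{-1}\sim D_m^{-1}\log(T/\tau)$ is not bounded uniformly in $\tau$. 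The discrete Gronwall step of Theorem~\ref{thm:overall} would turn it into a factor $\tau^{-C}$. This is exactly what the regularization \eqref{eq:m_reg} is for. It makes the variance $(n+1-s)\tau D_m+\sigma\ge\sigma$, so the sum is $O(D_m^{-1}|\log\sigma|)$; that is where $C_2=(4+4D_m^{-1})|\log\sigma|$, and hence $C_6$, comes from. You use $\sigma$ only for Lipschitz continuity and the sampling estimate. Your treatment of $v$ (``$\nabla m$, whose error is smoothed in this way'') absorbs a single derivative, which does not suffice.

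Two smaller points. First, the $H_0^{1/2}$ in the sampling term comes from the resolvent decay $\tau\sum_s\kappa_{\mathbf{q}}^{n+1-s}\le(\beta+D_m|2\pi\mathbf{q}/L|^2)^{-1}$; the filter weights alone would give $H_0^{5/2}$ for the gradient. Second, the per-step mismatch you note between the exponential and implicit multipliers would accumulate into a $\tau$-dependent term that the statement does not contain. The paper sidesteps this by writing the recursions for $\hat k_{m\mathbf{q}}$ and $k_{m\mathbf{q}}$ with the same implicit factor.
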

\begin{proof}
Based on the properties of the system \eqref{eq:haptotaxis-system}, we will estimate the error in \(L^1\) or \(W^{1,1}\) norm for different types of concentration sequentially.

\textbf{MDE concentration \(m\).}
The update of MDE concentration \(k_m\) and the algorithm maintained \(\hat{k}_m\) in the frequency domain satisfies the following equations:
\begin{equation}
\begin{aligned}
k_{m\mathbf{q}}^{(n+1)}&= \frac{1}{1+\tau \beta + \tau D_m \left|\mathbf{q}\frac{2\pi}{L}\right|^2}\left(k_{m\mathbf{q}}^{(n)}+\tau \mathcal{F}[u^{(n)}]_\mathbf{q}\right),\\
\hat{k}_{m\mathbf{q}}^{(n+1)}&= \frac{1}{1+\tau \beta + \tau D_m \left|\mathbf{q}\frac{2\pi}{L}\right|^2}\left(\hat{k}_{m\mathbf{q}}^{(n)}+\tau \mathcal{F}_H[\hat{u}^{(n)}]_\mathbf{q}\right).
\end{aligned}
\end{equation}
The differences between $k_m^{(n+1)}$ and $\hat{k}_m^{(n+1)}$ arise from the following sources: the interpolation error inherent in the PIC method, the sample point offset error originating from the discrepancy between $\widehat{\mathbf{X}}_p^{(s)}$ and $\widetilde{\mathbf{X}}_p^{(s)}$ for \(0 \leq s \leq n\), and the random sampling error between $\tilde{u}^{(s)}$ and $u^{(s)}$. We will provide rigorous estimates for these errors.

Denote the empirical concentration \(\overline{m}\) as follows, without performing the PIC transfer (Alg.\,\ref{alg:p2g}):
\begin{equation}
\begin{aligned}
\overline{k}_{m\mathbf{q}}^{(0)}:=\hat{k}_{m\mathbf{q}}^{(0)},\quad&\overline{k}_{m\mathbf{q}}^{(n+1)}:= \frac{1}{1+\tau \beta + \tau D_m \left|\mathbf{q}\frac{2\pi}{L}\right|^2}\left(\overline{k}_{m\mathbf{q}}^{(n)}+\tau \mathcal{F}[\hat{u}^{(n)}]_\mathbf{q}\right),\\
&\overline{m}^{(n)}(\mathbf{x}) := \sum_{\mathbf{q} \in U_{H_0}} \overline{k}_{m\mathbf{q}}^{(n)} e^{i \frac{2\pi}{L} \mathbf{q} \cdot \mathbf{x}}.
\end{aligned}
\end{equation}
Taking the difference yields:
\begin{equation}
\label{eq:freq_estimate}
\begin{aligned}
\left|\hat{k}_{m\mathbf{q}}^{(n+1)}-\overline{k}_{m\mathbf{q}}^{(n+1)}\right| &\leq \tau \sum_{s=0}^n \kappa_\mathbf{q}^{n+1-s}\left|\mathcal{F}_H[\hat{u}^{(s)}]_\mathbf{q}-\mathcal{F}[\hat{u}^{(s)}]_\mathbf{q}\right|\\
&\leq \tau C_4 L^{-3}\left|\mathbf{q}\frac{2\pi}{L}\right|^4 \left(\frac{L}{H}\right)^4 \sum_{s=0}^n \kappa_\mathbf{q}^{n+1-s}\\
&\leq \frac{\left|\mathbf{q}\frac{2\pi}{L}\right|^4}{\beta+D_m\left|\mathbf{q}\frac{2\pi}{L}\right|^2} \cdot C_4 \left(\frac{L}{H}\right)^4 L^{-3},
\end{aligned} 
\end{equation}
where \(\kappa_\mathbf{q}:=\frac{1}{1+\tau \beta + \tau D_m \left|\mathbf{q}\frac{2\pi}{L}\right|^2}\) is the decay factor of frequency \(\mathbf{q}\), and the constant \(C_4\) mentioned in Theorem \ref{thm:pic_error} is independent of \(L,H,P,\tau\).

The global interpolation error \(\left\|\hat{m}^{(n+1)}-\overline{m}^{(n+1)}\right\|_1\) is bounded by the following estimate using Parseval's Theorem.
\begin{equation}
\begin{aligned}
\left\|\hat{m}^{(n+1)}-\overline{m}^{(n+1)}\right\|_1 &=\int_\Omega \left|\hat{m}^{(n+1)}(\mathbf{x})-\overline{m}^{(n+1)}(\mathbf{x})\right|\,\mathrm{d}\mathbf{x}\\
&\leq \left(\int_\Omega \left|\hat{m}^{(n+1)}(\mathbf{x})-\overline{m}^{(n+1)}(\mathbf{x})\right|^2\,\mathrm{d}\mathbf{x}\right)^\frac{1}{2} \left(\int_\Omega 1\,\mathrm{d}\mathbf{x}\right)^\frac{1}{2}\\
&= \left(L^3 \sum_{\mathbf{q}\in U_{H_0}}\left|\hat{k}_{m\mathbf{q}}^{(n+1)}-\overline{k}_{m\mathbf{q}}^{(n+1)}\right|^2\right)^\frac{1}{2} \cdot L^{\frac{3}{2}}.
\end{aligned}
\end{equation}
Substituting the estimate for each frequency in \eqref{eq:freq_estimate}, we obtain:
\begin{equation}
\begin{aligned}
\left\|\hat{m}^{(n+1)}-\overline{m}^{(n+1)}\right\|_1 &\leq \left( \sum_{\mathbf{q}\in U_{H_0}} \left(\frac{\left|\mathbf{q}\frac{2\pi}{L}\right|^4}{\beta+D_m\left|\mathbf{q}\frac{2\pi}{L}\right|^2} \cdot C_4 \left(\frac{L}{H}\right)^4\right)^2 \right)^\frac{1}{2}\\
&\leq \left(H_0^3\cdot\frac{\left(H_0\frac{2\pi}{L}\right)^4}{D_m^2}\cdot C_4^2 \left(\frac{L}{H}\right)^8\right)^\frac{1}{2} \\
&\leq C_1 H_0^\frac{7}{2} H^{-4} L^2,
\end{aligned}
\end{equation}
where \(C_1=8\pi^3\frac{C_4}{D_m}\) is independent of \(L,H,P,\tau\).

The global interpolation error of the concentration gradient can be controlled by the same method, with an additional factor of the maximum frequency \(H_0 \cdot \frac{2\pi}{L}\):
\begin{equation}
\begin{aligned}
\left\|\nabla \hat{m}^{(n+1)}-\nabla \overline{m}^{(n+1)}\right\|_1 \leq C_1 H_0^\frac{9}{2} H^{-4} L.
\end{aligned}
\end{equation}
The sample point offset error originating from the discrepancy between $\widehat{\mathbf{X}}_p^{(s)}$ and $\widetilde{\mathbf{X}}_p^{(s)}$ is bounded by the following estimate. Notice that
\begin{equation}
\hat{m}^{(n+1)}(\mathbf{x}) - \tilde{m}^{(n+1)}(\mathbf{x}) = \sum_{0 \leq s \leq n} \tau e^{- \beta(n+1-s) \tau}\bigl( (\hat{u}^{(s)} - \tilde{u}^{(s)}) \ast K_{(n+1-s)\tau D_m+\sigma} \bigr)(\mathbf{x}),
\end{equation}
where \(K_\epsilon\) is the convolution kernel corresponding to the three-dimensional normal distribution \(N(0,\epsilon I_3)\).

For two distinct points \(\mathbf{X}_1,\mathbf{X}_2 \in \Omega\), denote the Dirac functions \(\delta_1(\mathbf{x})=\delta(\mathbf{x}-\mathbf{X}_1),\delta_2(\mathbf{x})=\delta(\mathbf{x}-\mathbf{X}_2)\). The following conclusion can be obtained through calculation, where \(R:=|\mathbf{X}_1-\mathbf{X}_2|\) is the distance:
\begin{equation}
\begin{aligned}
\left\|\delta_1 \ast K_\epsilon\right\|_1=1,\quad\left\|(\delta_1-\delta_2) \ast K_\epsilon\right\|_1 \leq 4R \max \{\epsilon^{-\frac{1}{2}},1\},\\
\left\|(\delta_1-\delta_2) \ast \nabla K_\epsilon\right\|_1 \leq 4R \max \{\epsilon^{-1},1\}.
\end{aligned}
\end{equation}
The discrepancy between the empirical distribution convolved with a Gaussian kernel can be controlled by the pairwise distances between corresponding points:
\begin{equation}
\begin{aligned}
\left\|(\hat{u}^{(s)}-\tilde{u}^{(s)})\ast K_\epsilon \right\|_1 &= \left\|\sum_{p=1}^P\left( \hat{a}_p^{(s)} \delta\left(\mathbf{x} - \widehat{\mathbf{X}}_p^{(s)}\right)-\tilde{a}_p^{(s)} \delta\left(\mathbf{x} - \widetilde{\mathbf{X}}_p^{(s)}\right)\right)\ast K_\epsilon \right\|_1 \\
&\leq \sum_{p=1}^P \left(\left|\hat{a}_p^{(s)}-\tilde{a}_p^{(s)}\right| + 4\tilde{a}_p^{(s)}\left|\widehat{\mathbf{X}}_p^{(s)}-\widetilde{\mathbf{X}}_p^{(s)}\right| \cdot \max \{\epsilon^{-\frac{1}{2}},1\} \right)\\
&=h^{(s)}+4 d^{(s)}\cdot \max \{\epsilon^{-\frac{1}{2}},1\},\\
\left\|(\hat{u}^{(s)}-\tilde{u}^{(s)})\ast \nabla K_\epsilon \right\|_1 & \leq h^{(s)} \cdot \epsilon^{-\frac{1}{2}}+4 d^{(s)}\cdot \max \{\epsilon^{-1},1\}.
\end{aligned}
\end{equation}
Accumulating over time, we obtain:
\begin{equation}
\begin{aligned}
&\|\hat{m}^{(n+1)} - \tilde{m}^{(n+1)}\|_1 \leq \sum_{0 \leq s \leq n} \tau \left\| (\hat{u}^{(n+1-s)} - u^{(n+1-s)}) \ast K_{s\tau D_m+\sigma} \right\|_1\\
&\leq \tau \sum_{0 \leq s \leq n} \left(h^{(n+1-s)} +4d^{(n+1-s)} \max\left\{\left(s\tau D_m+\sigma\right)^{-\frac{1}{2}},1\right\}\right),\\
&\|\nabla\hat{m}^{(n+1)} - \nabla \tilde{m}^{(n+1)}\|_1 \leq \sum_{0 \leq s \leq n} \tau \left\| (\hat{u}^{(n+1-s)} - u^{(n+1-s)}) \ast \nabla K_{s\tau D_m+\sigma} \right\|_1\\
&\leq \tau \sum_{0 \leq s \leq n} \left(h^{(n+1-s)}\cdot \left(s\tau D_m+\sigma\right)^{-\frac{1}{2}} +4d^{(n+1-s)} \max\left\{\left(s\tau D_m+\sigma\right)^{-1},1\right\}\right).
\end{aligned}
\end{equation}

The random sampling error consists of two parts: truncation in the high-frequency component and variance in the low-frequency component. Specifically, we have the following estimate (see\cite{hu2025fast} for complete details):
\begin{equation}
\begin{aligned}
\left\|\tilde{m}^{(n+1)}-  m^{(n+1)}\right\|_1 & \leq L^{\frac{3}{2}}\left(\left\|\tilde{m}^{(n+1)}- m^{(n+1)}\right\|_2\right)^\frac{1}{2}\\
&= \biggl( L^6 \sum_{\mathbf{q} \in \mathbb{Z}^3 \setminus U_{H_0}}\left| k^{(n)}_{m\mathbf{q}} \right|^2  + \sum_{\mathbf{q} \in U_{H_0}} \left| \tilde{k}^{(n)}_{m\mathbf{q}} - k^{(n)}_{m\mathbf{q}} \right|^2 \biggr)^\frac{1}{2}\\
&\leq \left(\left(H_0 \frac{2\pi}{L}\right)^{-6} L^3 N_{1,2}+\frac{(M_0 e^{n\tau})^2}{PH_0}L^4\right)^\frac{1}{2},\\
\left\|\nabla\tilde{m}^{(n+1)}- \nabla m^{(n+1)}\right\|_1 
&\leq \left(\left(H_0 \frac{2\pi}{L}\right)^{-4} L^3 N_{1,2}+\frac{(M_0 e^{n\tau})^2 H_0}{P}L^2\right)^\frac{1}{2}.
\end{aligned}
\end{equation}
Summing up all sources of error, we obtain:
\begin{equation}
\begin{aligned}
\left\|\hat{m}^{(n+1)}-  m^{(n+1)}\right\|_1 & \leq C_1 H_0^\frac{7}{2} H^{-4} L^2+\sum_{0\leq s \leq n}\left(d^{(s)}F_0^{(s,n+1)}+h^{(s)}G_0^{(s,n+1)}\right)\\
&\quad+H_0^{-3}L^\frac{9}{2}N_{1,2}^\frac{1}{2}+H_0^{-\frac{1}{2}}P^{-\frac{1}{2}}L^2 e^{n\tau},\\
\left\|\nabla\hat{m}^{(n+1)}- \nabla m^{(n+1)}\right\|_1 & \leq C_1 H_0^\frac{9}{2} H^{-4} L+\sum_{0\leq s \leq n}\left(d^{(s)}F_1^{(s,n+1)}+h^{(s)}G_1^{(s,n+1)}\right)\\
&\quad+H_0^{-2}L^\frac{7}{2}N_{1,2}^\frac{1}{2}+H_0^{\frac{1}{2}}P^{-\frac{1}{2}}L e^{n\tau},
\end{aligned}
\end{equation}
where the nonnegative coefficients \(F_0^{(s,n)}=\tau,G_0^{(s,n)}=4\tau ((n-s)D_m+\sigma)^{-\frac{1}{2}},F_1^{(s,n)}=\tau ((n-s)D_m+\sigma)^{-\frac{1}{2}},G_1^{(s,n)}=4\tau ((n-s)D_m+\sigma)^{-1}\) satisfy\\ \(\max\left\{\sum_{s=0}^{n-1} F_0^{(s,n)},\sum_{s=0}^{n-1} F_1^{(s,n)},\sum_{s=0}^{n-1} G_0^{(s,n)},\sum_{s=0}^{n-1} G_1^{(s,n)}\right\}\leq C_2(n\tau+\sqrt{n\tau})\) \\for \(C_2=(4+4D_m^{-1})|\log \sigma|\) independent of \(L,H,P,\tau\).

\textbf{ECM concentration \(v\).} The governing equation for \(v\) is given by \((\log v)_t = - \alpha m\), which implies that \(v\) decreases monotonically at all grid points and cannot exceed its initial value \(v_0:=\max_{\mathbf{x}\in \Omega}v(\mathbf{x},0)\) at any time. Consequently, we have the following estimate:
\begin{equation}
\begin{aligned}
\left\|\nabla\hat{v}^{(n)}- \nabla v^{(n)}\right\|_1 &= \int_\Omega \left|\hat{v}^{(n)}(\mathbf{x}) \nabla \log \hat{v}^{(n)}(\mathbf{x}) -v^{(n)}(\mathbf{x}) \nabla \log v^{(n)}(\mathbf{x})\,\mathrm{d}\mathbf{x}\right|\\
&\leq v_0 \left\|\nabla \log\hat{v}^{(n)}- \nabla \log v^{(n)}\right\|_1 + n\tau M_1 \left\|\log\hat{v}^{(n)}- \log v^{(n)}\right\|_1.
\end{aligned}
\end{equation}
From the update rule of \(v\), we have \(\log v^{(n)}=-\alpha \tau \sum_{s<n}m^{(s)}\). Hence, there exists a constant \(C_3=v_0 \alpha C_2 + \alpha M_1\) independent of \(L,H,P,\tau\), such that:
\begin{equation}
\label{eq:w11_v}
\begin{aligned}
\left\|\nabla\hat{v}^{(n)}- \nabla v^{(n)}\right\|_1 \leq& n \tau C_1 H_0^\frac{9}{2} H^{-4} L+\sum_{0\leq s < n}\left(d^{(s)}F_2^{(s,n)}+h^{(s)}G_2^{(s,n)}\right)\\
&\quad+n \tau H_0^{-2}L^\frac{7}{2}N_{1,2}^\frac{1}{2}+n \tau H_0^{\frac{1}{2}}P^{-\frac{1}{2}}L e^{n\tau},
\end{aligned}
\end{equation}
where the sum of coefficients: \(\max\left\{\sum_{s=0}^{n-1} F_2^{(s,n)},\sum_{s=0}^{n-1} G_2^{(s,n)}\right\}\leq C_3(n\tau+n^2\tau^2)\).

\textbf{Oxygen concentration w}. The governing equation for \(w\) is given by \(w_t = D_w \Delta w - \eta(u) w + \gamma v - w\). Similarly to the MDE concentration \(m\), we only need to estimate the $L^1$-norm \(\left\|\left((\gamma \hat{v}^{(s)}-\eta(\hat{u}^{(s)}) \hat{w}^{(s)})-(\gamma v^{(s)}-\eta(u^{(s)}) w^{(s)})\right)\ast K_\epsilon\right\|_1\), where \(\gamma \hat{v}^{(s)}-\eta(\hat{u}^{(s)})\) is the source term in Alg. \ref{alg:sipf-pic}. This can be decomposed as:
\begin{equation}
\begin{aligned}
\left\|\left(\hat{v}^{(s)}- v^{(s)}\right)\ast K_\epsilon\right\|_1 &\leq \left\|\hat{v}^{(s)}- v^{(s)}\right\|_1 \leq v_0 \left\|\log \hat{v}^{(s)}- \log v^{(s)}\right\|_1,\\
\left\|\left(\eta(\hat{u}^{(s)}) \hat{w}^{(s)}-\eta(u^{(s)}) w^{(s)}\right)\ast K_\epsilon\right\|_1 &\leq 2\left\|\hat{w}^{(s)}- w^{(s)}\right\|_1 + 2w_0 \left\|\left(\hat{u}^{(s)}-u^{(s)}\right) \ast K_\epsilon\right\|_1.
\end{aligned}
\end{equation}
Summing over \(s<n\) yields:
\begin{equation}
\label{eq:w11_w}
\begin{aligned}
\left\|\hat{w}^{(n)}- w^{(n)}\right\|_1 \leq& n \tau C_1 H_0^\frac{9}{2} H^{-4} L+\sum_{0\leq s < n}\left(d^{(s)}F_2^{(s,n)}+h^{(s)}G_2^{(s,n)}\right)\\
+2 n\tau^2 \sum_{0\leq s < n}& \left\|\hat{w}^{(s)}- w^{(s)}\right\|_1 +n \tau H_0^{-2}L^\frac{7}{2}N_{1,2}^\frac{1}{2}+n \tau H_0^{\frac{1}{2}}P^{-\frac{1}{2}}L e^{n\tau}.
\end{aligned}
\end{equation}
Combining the above results \eqref{eq:w11_v}, \eqref{eq:w11_w} yields the desired conclusion, where the coefficients
\begin{equation}
\label{eq:c5c6}
C_5=1+C_1+N_{1,2}^\frac{1}{2},C_6=\max\{C_2,C_3\}.
\end{equation}.
\end{proof}
The main convergence result is presented as below.
\begin{theorem}
\label{thm:overall}
For the SIPF-W method with fourth-order particle-to-grid and second-order grid-to-particle interpolation and final time T, the weighted absolute error defined as $\hat{d}^{(n)}:=\hat{a}_p\sum_{1\leq p \leq P}\left|\mathbf{X}_p(n \tau)-\widehat{\mathbf{X}}_p^{(n)}\right|$ and the particle weight error defined as $\hat{h}^{(n)}:=\sum_{1\leq p \leq P}\left|a_p(n \tau)-\hat{a}_p\right|$, satisfy the following growth bound:
\begin{equation}
\begin{aligned}
\hat{d}^{\left(N_T\right)} + \hat{h}^{\left(N_T\right)} \leq C_7e^{C_8 T} \left(\tau + H^{-4} H_0^{\frac{9}{2}}L+  P^{-\frac{1}{2}} H_0^{\frac{1}{2}}L+H_0^{-2}L^\frac{7}{2}\right),
\end{aligned}
\end{equation}
where \(C_7,C_8\) are constants independent of \(L,H,P,\tau\), specified in \eqref{eq:thm_gronwall}.
\label{eq:main_results}
\end{theorem}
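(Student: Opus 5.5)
The proof splits the error into two pieces through the auxiliary branches of Definition~\ref{def:br_cp}. We compare the exact SDE realizations $(\mathbf{X}_p(t),a_p(t))$ with $(\widetilde{\mathbf{X}}_p^{(n)},\tilde a_p^{(n)})$, and then compare the auxiliary branches with the algorithm particles $(\widehat{\mathbf{X}}_p^{(n)},\hat a_p^{(n)})$. By the triangle inequality,
\[
\hat d^{(n)}+\hat h^{(n)}\le \sum_p \tilde a_p^{(n)}|\mathbf{X}_p(t_n)-\widetilde{\mathbf{X}}_p^{(n)}|+\sum_p|a_p(t_n)-\tilde a_p^{(n)}| + C e^{2t_n}\bigl(d^{(n)}+h^{(n)}\bigr).
\]
The factor $e^{2t_n}$ comes from switching the weights $\hat a_p$ and $\tilde a_p$. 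Lemma~\ref{lemma:mass} and Corollary~\ref{corollary:mass} make every weight comparable to $M_0/P$ up to $e^{\pm t_n}$. The first piece is handled by Lemma~\ref{lemma:temporal}. For the positions, Cauchy--Schwarz together with $\sum_p\tilde a_p^{(n)}\le M_0e^{t_n}$ gives a bound $M_0e^{(C_0+1)t_n}\tau$. For the weights, $|a_p-\tilde a_p|\le \max(a_p,\tilde a_p)\,|\log a_p-\log\tilde a_p|$, and both weights are at most $e^{t_n}M_0/P$, so the bound is again $O(e^{(C_0+1)t_n}\tau)$. This produces the $\tau$ term.

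The second piece needs a coupled discrete Gronwall argument for $D^{(n)}:=\mathbb{E}[d^{(n)}]+\mathbb{E}[h^{(n)}]$, with $D^{(0)}=0$. We add Lemma~\ref{lemma:d_estimate} and the $h$-lemma \eqref{lemma:h_estimate}. For $n\tau\le T$ this yields
\[
D^{(n+1)}\le (1+A\tau)D^{(n)}+Be^{T}\tau H^{-2}L^2+N_0(\chi+2)\tau\,\mathbb{E}[c^{(n)}],
\]
with $A=1+M_2\chi+2M_0M_1e^{T}$. Next we substitute Lemma~\ref{lemma:c_estimate} for $\mathbb{E}[c^{(n)}]$. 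Its right-hand side contains the source $E:=C_5e^{2T}(H_0^{9/2}H^{-4}L+H_0^{1/2}P^{-1/2}L+H_0^{-2}L^{7/2})$, a self-referential term $2T\tau\sum_{s<n}c^{(s)}$, and memory terms $\sum_{s<n}(d^{(s)}F^{(s,n)}+h^{(s)}G^{(s,n)})$.

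The self-reference is removed first. Applying a discrete Gronwall inequality to $c^{(n)}$ alone gives
\[
c^{(n)}\le e^{2T^2}\Bigl(E+\sum_{s<n}(d^{(s)}F^{(s,n)}+h^{(s)}G^{(s,n)})\Bigr)
\]
(after expanding the resolvent, up to a harmless reshuffling of the memory coefficients). The memory terms are then bounded using $\max_{s<n}D^{(s)}$ together with $\sum_sF^{(s,n)},\sum_sG^{(s,n)}\le C_6(T+\sqrt T)$. Set $\bar D^{(n)}:=\max_{s\le n}D^{(s)}$. The recursion becomes
\[
\bar D^{(n+1)}\le \bigl(1+\tau(A+C')\bigr)\bar D^{(n)}+\tau C''\bigl(H^{-2}L^2+E\bigr),
\]
and iterating gives $\bar D^{(N_T)}\le e^{(A+C')T}\,T C''(H^{-2}L^2+E)$. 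The second-order $H^{-2}L^2$ term from grid-to-particle interpolation has to be absorbed into the stated bound. I expect this is done either by taking $H_0\le H$ so that it is dominated, or by listing it inside $C_7$'s bracket through $H^{-4}H_0^{9/2}$ in the regime of Corollary~\ref{cor:optimized_error}. If neither works, I would keep it as an extra term. Finally, collecting all exponential factors into $C_7e^{C_8T}$, recorded as \eqref{eq:thm_gronwall}, gives the theorem in expectation.

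The main obstacle is the nonlocal-in-time coupling. The bound on $c^{(n)}$ depends on every earlier $d^{(s)},h^{(s)}$ with singular weights $\tau((n-s)\tau D_m+\sigma)^{-1}$. The uniform summability of $F,G$ (which uses $\sigma$ from \eqref{eq:m_reg} to cut off the $|\log\sigma|$ singularity) is exactly what lets the running-maximum trick close the Gronwall loop. Care is needed so that the constants stay independent of $P,H,\tau$ and depend only on $T$ and $\sigma$.
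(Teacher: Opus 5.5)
Your argument is correct and follows the paper's skeleton: the split through the auxiliary branches, the temporal lemma for the $\tau$ term, and a discrete Gronwall argument for $d,h,c$. Where you differ is in how you close the Gronwall loop.

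\textbf{The paper's route.} It first collapses the three lemmas into a one-step system.
\begin{itemize}
\item It replaces the memory sum $\sum_{s<n}(d^{(s)}F^{(s,n)}+h^{(s)}G^{(s,n)})$ of the $c$-lemma by $K_0(d^{(n)}+h^{(n)})$.
\item It replaces $(L/H)^2$ by $L^2H_0^{-2}$ via $H_0\le H$ and lumps this with $H_0^{-2}L^{7/2}$. This is your first option, and it tacitly needs $L\ge 1$.
\item It introduces $S^{(n)}=\tau\sum_{s\le n}c^{(s)}$ and $\xi^{(n)}=S^{(n)}+d^{(n)}+h^{(n)}$. The self-reference $2n\tau^2\sum_s c^{(s)}$ then becomes the linear term $2T\tau S^{(n)}$, and the single recursion $\xi^{(n+1)}\le(1+(2T+K_1+2K_0)\tau)\xi^{(n)}+2\tau K_0(\cdots)$ finishes the job.
\end{itemize}

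\textbf{Your route.} You remove the self-reference by a separate resolvent/Gronwall step on $c$. You then handle the genuine history dependence with the running maximum $\bar D^{(n)}$, together with the uniform bounds on $\sum_sF^{(s,n)}$ and $\sum_sG^{(s,n)}$.

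\textbf{Comparison.} The paper's augmented variable is more compact. However, replacing the memory sum by current values is only justified when $d^{(s)},h^{(s)}$ are nondecreasing. Your running maximum provides exactly that, so yours is the more rigorous path to the same bound.

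Your final step is also more careful than the paper's. You track the factor $e^{2t_n}$ incurred when trading $\hat a_p$ for $\tilde a_p$. You also obtain the correct normalization $M_0e^{(C_0+1)t_n}\tau$ from Cauchy--Schwarz, whereas the paper's prefactor $e^{T}/P$ is off by a factor $M_0\sqrt{P}$; this is harmless for the stated rate.

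Both versions hide $T$-dependence in $C_8$: $e^{2T^2}$ comes from the self-reference and $e^{T}$ from the weight bounds. The statement allows this, since it only requires independence from $L,H,P,\tau$. Both also establish the bound in expectation, which is the only reading consistent with the $P^{-1/2}H_0^{1/2}L$ term.
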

\begin{proof}
Combining Lemma \ref{lemma:d_estimate}, Lemma \ref{lemma:h_estimate}, and Lemma \ref{lemma:c_estimate}, we demonstrate that the growth rates of \(d^{\left(N_T\right)}\) and \(h^{\left(N_T\right)}\) can be controlled by a Gronwall-type estimate. Specifically, let \(d^{\left(N_T\right)}\) and \(h^{\left(N_T\right)}\) satisfy the following relation, as demonstrated in the lemmas, where \(K_0,K_1\) are constants independent of \(L,H,P,\tau\):
\begin{equation}
\begin{aligned}
d^{(n+1)} & \leq (1+K_1\tau) d^{(n)} + K_0 \tau ( L^2 H_0^{-2}+c^{(n)}),\\
h^{(n+1)} & \leq (1+K_1\tau) h^{(n)} + K_0 \tau ( L^2 H_0^{-2}+c^{(n)}+d^{(n)}),\\
c^{(n+1)} & \leq 2n \tau^2 \sum_{s=0}^n c^{(s)} + K_0 (d^{(n)}+h^{(n)} + H^{-4} H_0^{\frac{9}{2}}L+  P^{-\frac{1}{2}} H_0^{\frac{1}{2}}L+H_0^{-2}L^\frac{7}{2}).
\end{aligned}
\end{equation}
Let the accumulation \(S^{(n)}:=\tau\sum_{s=0}^n c^{(s)}\) and \(\xi^{(n)}:=S^{(n)}+d^{(n)}+h^{(n)}\), then \(S^{(n+1)} \leq (1+2T\tau)S^{(n)}+\tau K_0 (d^{(n)}+h^{(n)}+H^{-4} H_0^{\frac{9}{2}}+ P^{-\frac{1}{2}} H_0^{\frac{1}{2}}+H_0^{-2})\). We have the one-step estimate:
\begin{equation}
\xi^{(n+1)} \leq \xi^{(n)}+(2T +K_1+2K_0) \tau\xi^{(n)} + 2\tau K_0( H^{-4} H_0^{\frac{9}{2}}L+  P^{-\frac{1}{2}} H_0^{\frac{1}{2}}L+H_0^{-2}L^\frac{7}{2}),
\end{equation}
which is suitable for the discrete Gronwall-type estimate, yielding the following inequality:
\begin{equation}
\label{eq:thm_gronwall}
\begin{aligned}
d^{\left(N_T\right)} + h^{\left(N_T\right)} \leq C_7e^{C_8 T} \left(H^{-4} H_0^{\frac{9}{2}}L+  P^{-\frac{1}{2}} H_0^{\frac{1}{2}}L+H_0^{-2}L^\frac{7}{2}\right).
\end{aligned}
\end{equation}

Finally, from the estimate
\begin{equation}
\label{eq:convergence_estimate}
\begin{aligned}
\hat{d}^{(N_T)} &\leq d^{(N_T)} + \sum_{p=1}^{P} \tilde{a}_p^{(N_T)} \bigl| \mathbf{X}_p(T) - \widetilde{\mathbf{X}}_p^{(N_T)} \bigr|
\leq d^{(N_T)} + \frac{e^{T}}{P}
\Biggl( \sum_{p=1}^{P} \bigl| \mathbf{X}_p(T) - \widetilde{\mathbf{X}}_p^{(N_T)} \bigr|^2 \Biggr)^{\!\frac{1}{2}},\\
\hat{h}^{(N_T)} & \leq h^{(N_T)} + \sum_{p=1}^{P} \bigl|a_p(T) - \tilde{a}_p^{(N_T)} \bigr|\leq h^{(N_T)} + \frac{e^{T}}{P}
\Biggl( \sum_{p=1}^{P} \bigl| \log a_p(T) - \log\tilde{a}_p^{(N_T)} \bigr|^2 \Biggr)^{\!\frac{1}{2}},
\end{aligned}
\end{equation}
which follows from the triangle inequality the Cauchy-Schwarz inequality and Lemma \ref{lemma:mass}, 
combining with the temporal error bound from Lemma~\ref{lemma:temporal}, we obtain the claimed convergence bound in  \eqref{eq:main_results}.
\end{proof}

\begin{corollary}
\label{cor:optimized_error}
By optimizing the frequency cutoff threshold \(H_0 = H^{\frac{8}{13}}\) and the particle count \(P\geq H^{\frac{40}{13}}\), the error bound simplifies to
\begin{equation}
\begin{aligned}
\hat{d}^{\left(N_T\right)}+\hat{h}^{\left(N_T\right)} &\leq C_7e^{C_8 T} \left(\tau +  H^{-\frac{16}{13}}\left(L+L^{\frac{7}{2}}\right)\right).
\label{eq:main_results_optim}
\end{aligned}
\end{equation}
\end{corollary}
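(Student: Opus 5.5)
The corollary follows by substituting parameter choices into the bound of Theorem~\ref{thm:overall}. That bound is
\[
C_7e^{C_8T}\bigl(\tau + H^{-4}H_0^{9/2}L + P^{-1/2}H_0^{1/2}L + H_0^{-2}L^{7/2}\bigr),
\]
so the task is to choose $H_0$ and $P$ in terms of $H$ so that the three spatial terms match.

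\textbf{Step 1: balance the interpolation and truncation terms.} Set $H_0=H^{\theta}$. The term $H^{-4}H_0^{9/2}$ becomes $H^{-4+9\theta/2}$, and $H_0^{-2}$ becomes $H^{-2\theta}$. Equating exponents gives $-4+\tfrac{9}{2}\theta=-2\theta$, so $\tfrac{13}{2}\theta=4$ and $\theta=\tfrac{8}{13}$. Both terms then equal $H^{-16/13}$, multiplied by $L$ and $L^{7/2}$ respectively. This choice also respects the constraint $H_0\le H$ required by the low-pass filter.

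\textbf{Step 2: make the sampling term no larger.} We need $P^{-1/2}H_0^{1/2}\le H^{-16/13}$. Squaring gives $P\ge H_0 H^{32/13}=H^{8/13+32/13}=H^{40/13}$. Under the hypothesis $P\ge H^{40/13}$, the term $P^{-1/2}H_0^{1/2}L$ is therefore at most $H^{-16/13}L$.

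\textbf{Step 3: collect the terms.} Summing the three spatial contributions gives at most $2H^{-16/13}L+H^{-16/13}L^{7/2}$. The factor $2$ is absorbed into $C_7$; the corollary keeps the same name for the constant, so an enlarged constant is implicit. This yields the stated bound $C_7e^{C_8T}\bigl(\tau+H^{-16/13}(L+L^{7/2})\bigr)$.

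There is no real obstacle; the argument is exponent bookkeeping. The points to check are that the bound is monotone in $P$, so any larger $P$ only helps, and that the constants of Theorem~\ref{thm:overall} do not depend on $H_0$, which the theorem asserts since they are independent of $L,H,P,\tau$.
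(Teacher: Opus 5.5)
Your proposal is correct and follows the paper's (implicit) argument: substitute into Theorem~\ref{thm:overall}, choose $H_0=H^{8/13}$ so that $H^{-4}H_0^{9/2}$ and $H_0^{-2}$ both equal $H^{-16/13}$, and take $P\ge H^{40/13}$ so that $P^{-1/2}H_0^{1/2}\le H^{-16/13}$. You also correctly point out that the resulting factor of $2$ on the $L$ term has to be absorbed into an enlarged $C_7$, which the paper leaves unstated.
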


\section{Numerical Experiments}
\label{sec:Num}
In this section, we present a 2D numerical example (Section \ref{subsec:2D}) to compare the results of our algorithm with those of an established mesh-based algorithm, thereby validating the accuracy of our method. We then extend the example to a 3D case (Section \ref{subsec:3D}) that has not been addressed in earlier work, and validate the theoretical convergence rate described in Theorem \ref{thm:overall}.

\subsection{Two-Dimensional Simulation}
\label{subsec:2D}

We begin by presenting numerical results for a two-dimensional example ($d=2$), following the benchmark example described in\cite{walker2007global}. To assess the precision of our numerical method, we first conduct a convergence study to verify  that the algorithm converges as the resolution $H$ approaches infinity, and then perform a comparison with the second-order finite-volume scheme described in\cite{chertock2008second}.
In the benchmark example, the initial conditions of the system \eqref{eq:haptotaxis-system} are specified as
\begin{equation}
\begin{aligned}
&u(\mathbf{x},0)=5 \max (r_0^2-(\mathbf{x} - \mathbf{x}_0)^2, 0), \quad m(\mathbf{x},0)=u(\mathbf{x},0)\\&v(\mathbf{x},0) = 0.05 \cos \left(\frac{5\pi x_1^2}{18}\right) \sin \left(\frac{13\pi x_2^2}{72}\right) + 0.3, \quad w(\mathbf{x},0) = 4v(\mathbf{x},0),
\end{aligned}
\end{equation}
where the center of mass \(\mathbf{x}_0=(3,3)\), initial radius \(r_0 = \sqrt{0.3}\), physical domain $\Omega = [0, 6]^2$. The parameters are selected as \(\chi = 0.4,D_u = D_m = D_w = 0.01, \alpha = 5,\beta = 0.01,\gamma = 5\). The final time is set as $T = 6.0$, and the timestep $\tau = 10^{-3}$ (total steps $N_T=6000$). The particle count for our algorithm is set as \(P = 2^{22}\). When plotting the density map, the weights carried by the particles are mapped onto a $128\times 128$ spatial grid using the Particle-to-Grid transfer (Alg. \ref{alg:p2g}).

The evolution of the cell population $u(\mathbf{x},t)$ calculated by Alg. \ref{alg:sipf-pic} is visualized in Figure~\ref{fig:u_evolution}, which shows the concentration of $u$ at six equally spaced time points from $t=0$ to $t=T$. The results illustrate the rapid spread of cells driven by the haptotactic flux and the nonlinear reaction term, consistent with the dynamics reported in\cite{walker2007global}.

\begin{figure}[!ht]
    \centering
    
    \begin{subfigure}[b]{0.32\textwidth}
        \centering
        \includegraphics[width=\textwidth]{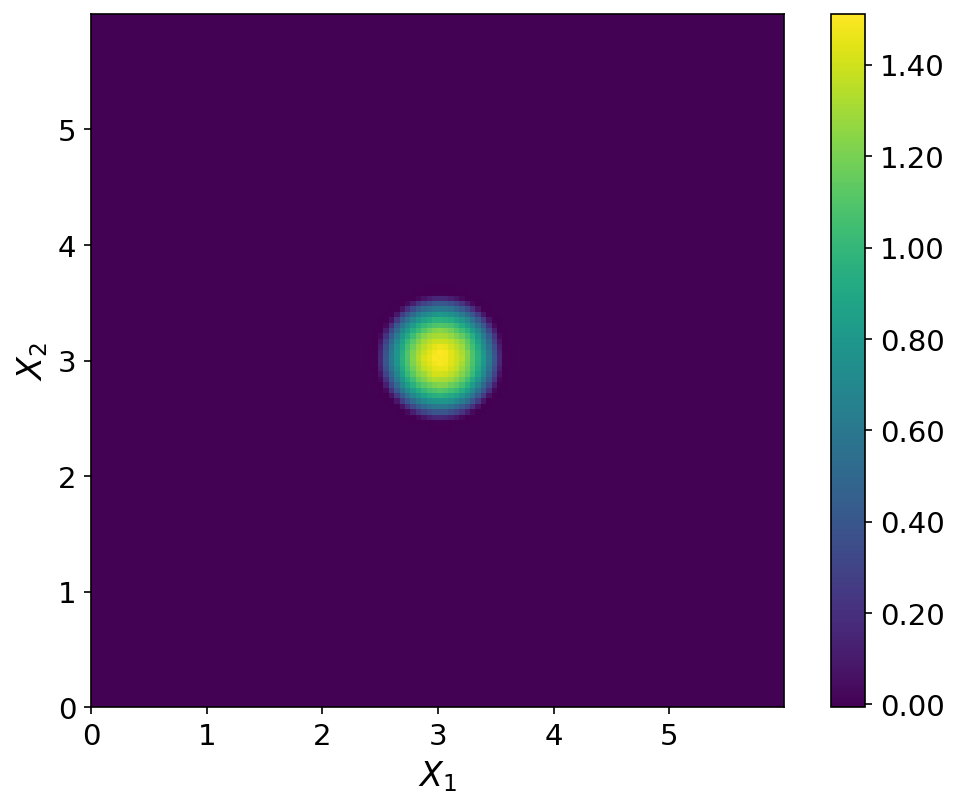}
        \caption{$t = 0.0$}
        \label{fig:u_t00}
    \end{subfigure}
    \hfill
    \begin{subfigure}[b]{0.32\textwidth}
        \centering
        \includegraphics[width=\textwidth]{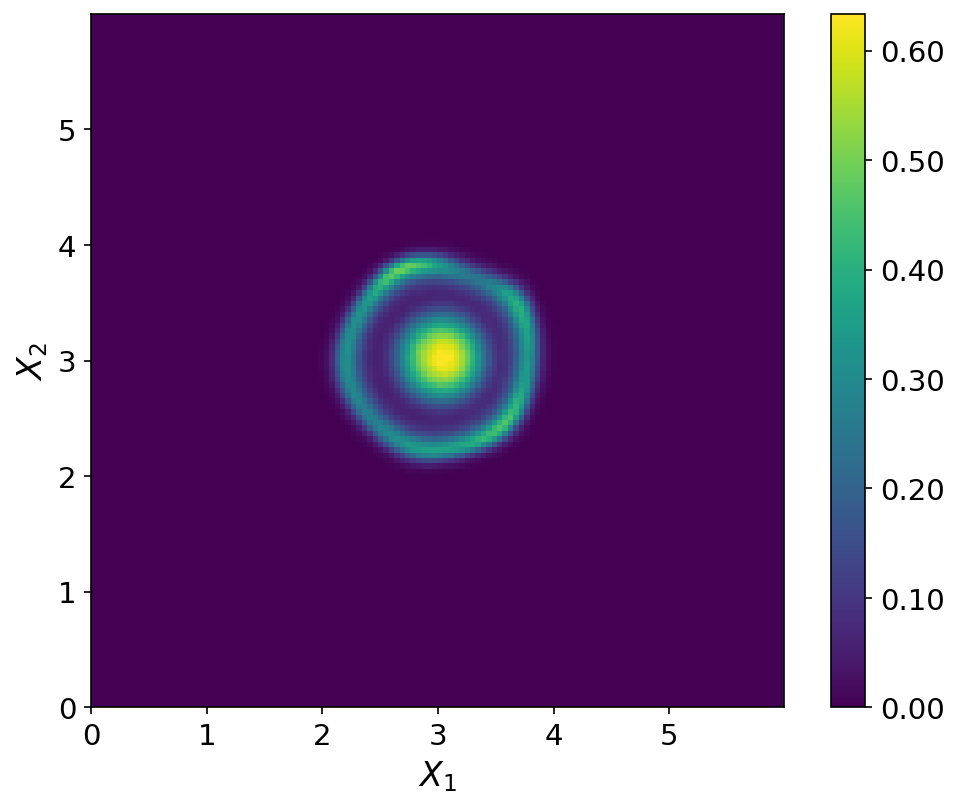}
        \caption{$t = 1.2$}
        \label{fig:u_t12}
    \end{subfigure}
    \hfill
    \begin{subfigure}[b]{0.32\textwidth}
        \centering
        \includegraphics[width=\textwidth]{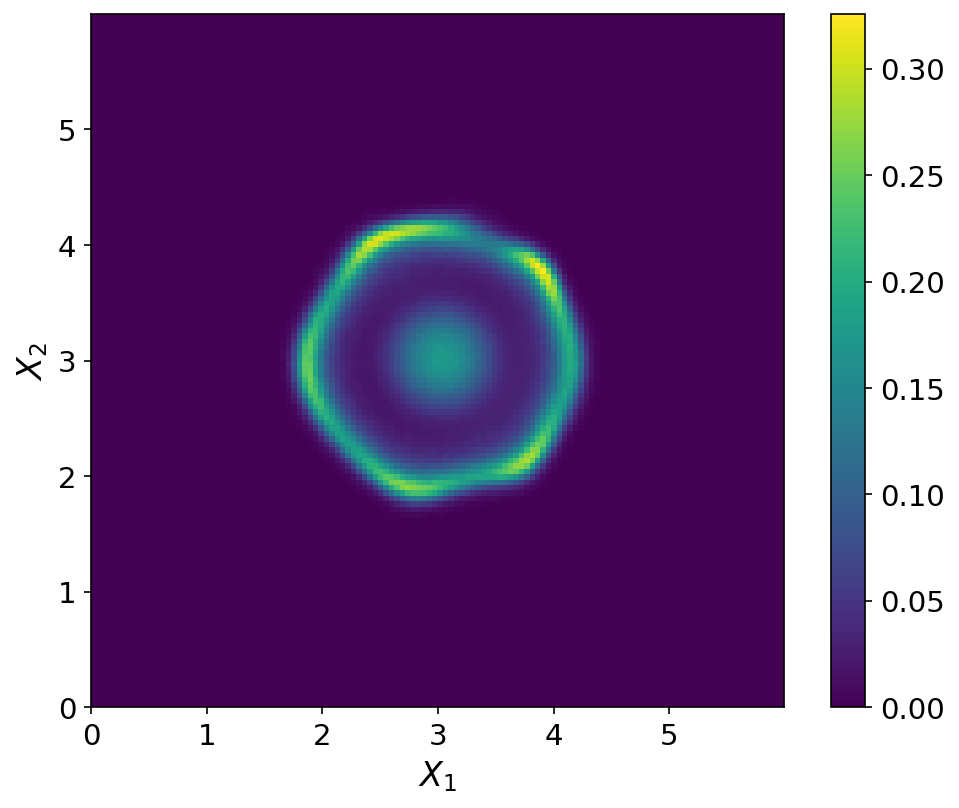}
        \caption{$t = 2.4$}
        \label{fig:u_t24}
    \end{subfigure}
    
    \vspace{0.5cm}
    
    \begin{subfigure}[b]{0.32\textwidth}
        \centering
        \includegraphics[width=\textwidth]{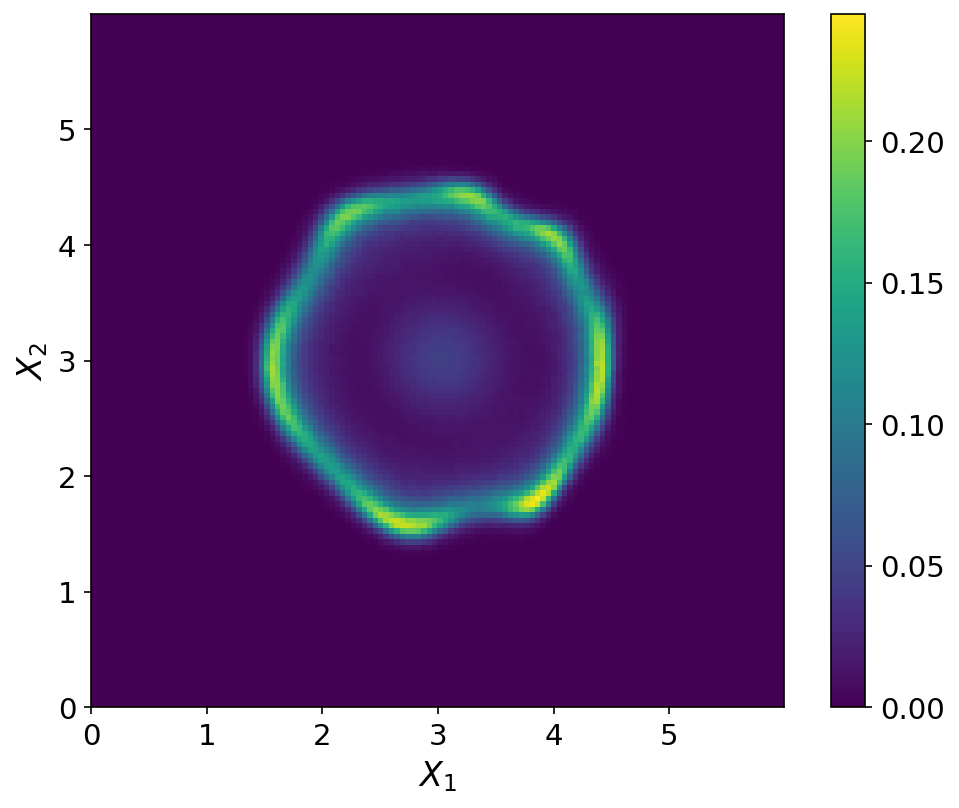}
        \caption{$t = 3.6$}
        \label{fig:u_t36}
    \end{subfigure}
    \hfill
    \begin{subfigure}[b]{0.32\textwidth}
        \centering
        \includegraphics[width=\textwidth]{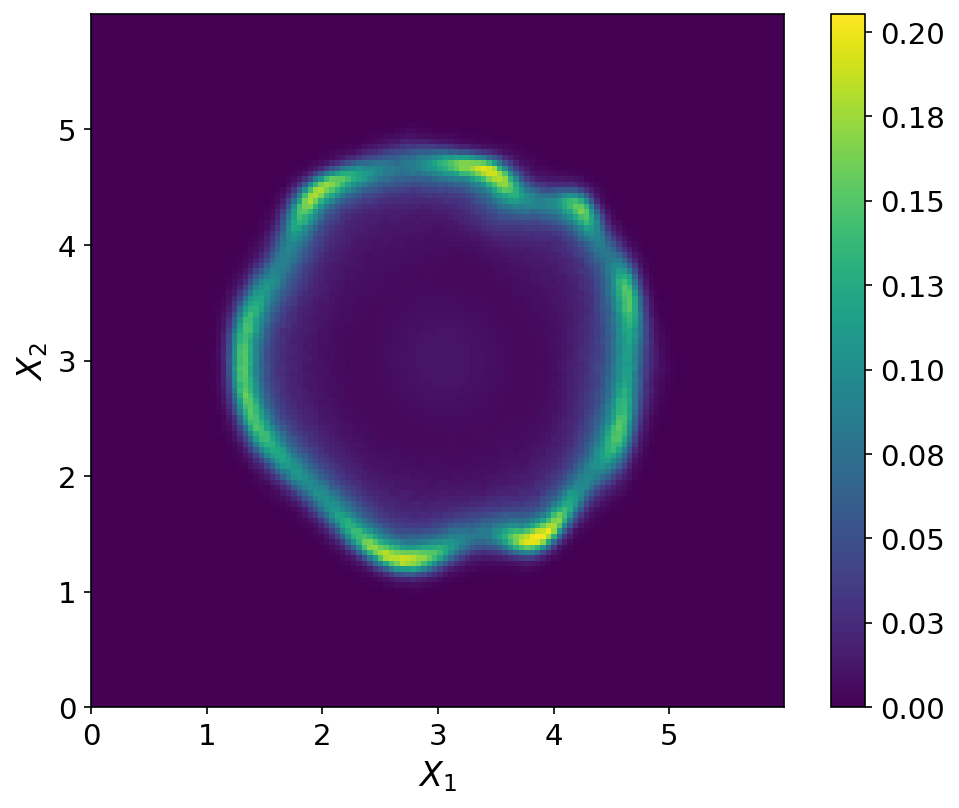}
        \caption{$t = 4.8$}
        \label{fig:u_t48}
    \end{subfigure}
    \hfill
    \begin{subfigure}[b]{0.32\textwidth}
        \centering
        \includegraphics[width=\textwidth]{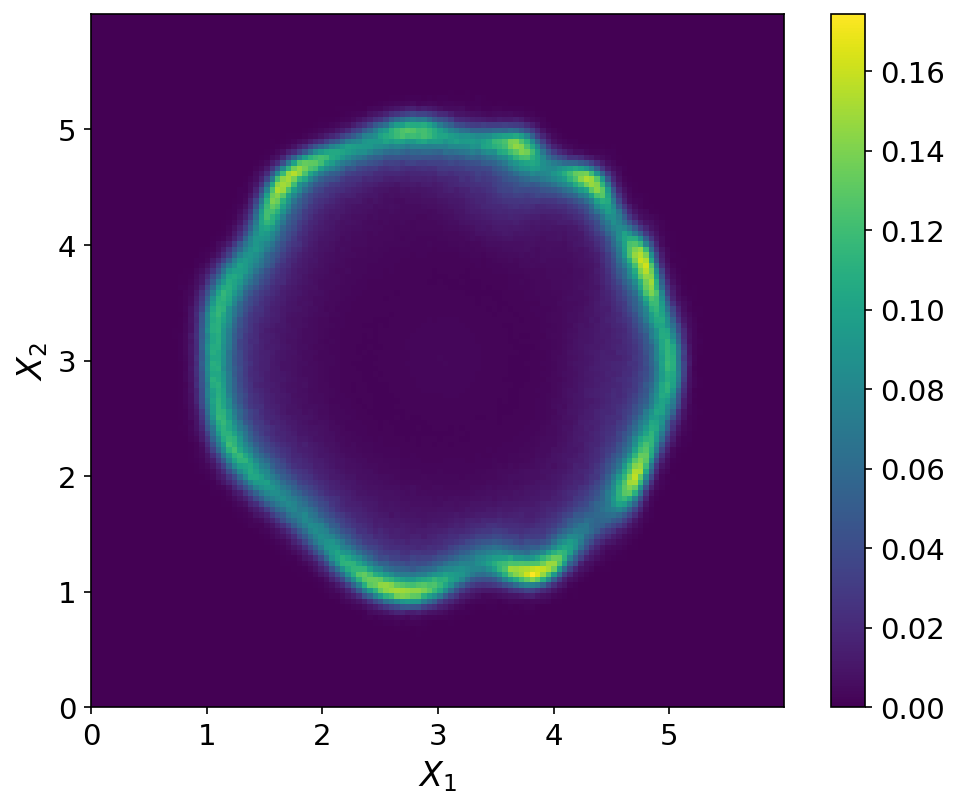}
        \caption{$t = 6.0$}
        \label{fig:u_t60}
    \end{subfigure}
    
    \caption{Evolution of the cell density $u$.}
    \label{fig:u_evolution}
\end{figure}

Convergence studies for the SIPF-W method are conducted by comparing the highest resolution of $H=2^{10}$ with other resolutions $H=2^6,...,2^9$. In these simulations, we set $H_0 = H$, meaning that no frequency cutoff is applied. To allow a fair comparison across different grid sizes, all solution fields are downsampled via Fourier truncation to a $16 \times 16$ spatial grid, and the relative difference of \(u\) is quantified using the $L^2$ norm at the final time \(T\), where the length of the physical domain is \(L=6.0\):
\begin{equation}
\mathcal{E} := \frac{\|u_1-u_2\|_2}{\max (\|u_1\|_2,\|u_2\|_2)},\quad\|u\|_2:=\frac{L}{16}\left(\sum_{0\leq i,j \leq 15}u_{ij}^2\right)^{\frac{1}{2}}.
\end{equation}

The numerical convergence of our method as the resolution increases from $H=2^6$ to $H=2^{10}$ is shown in Figure~\ref{fig:alg2alg}. The spatial convergence order is validated through the slope of $\log_2 \mathcal{E}$ versus $\log_2 H$, with a measured value of $-2.109$, which is higher than the predicted theoretical order of $16/13$ (Theorem \ref{thm:overall}).

\begin{figure}[htbp]
    \centering

    \begin{subfigure}[t]{0.45\textwidth}
        \centering
        \includegraphics[width=\textwidth]{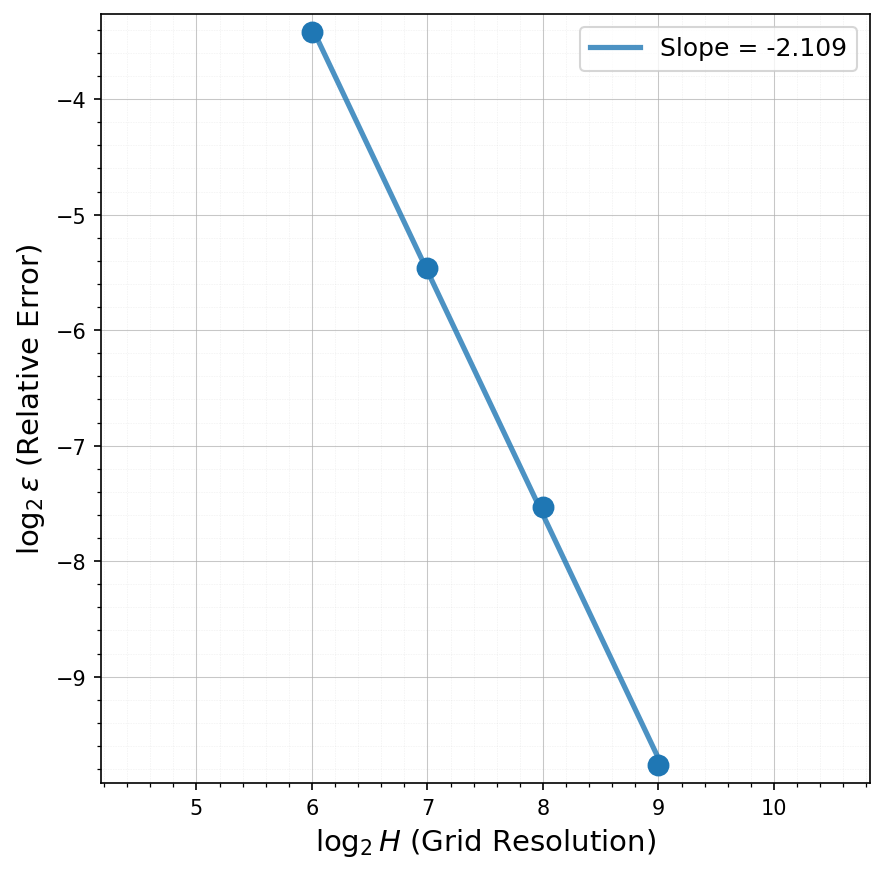}
        \caption{Numerical convergence of our method.}
        \label{fig:alg2alg}
    \end{subfigure}
    \hfill
    \begin{subfigure}[t]{0.45\textwidth}
        \centering
        \includegraphics[width=\textwidth]{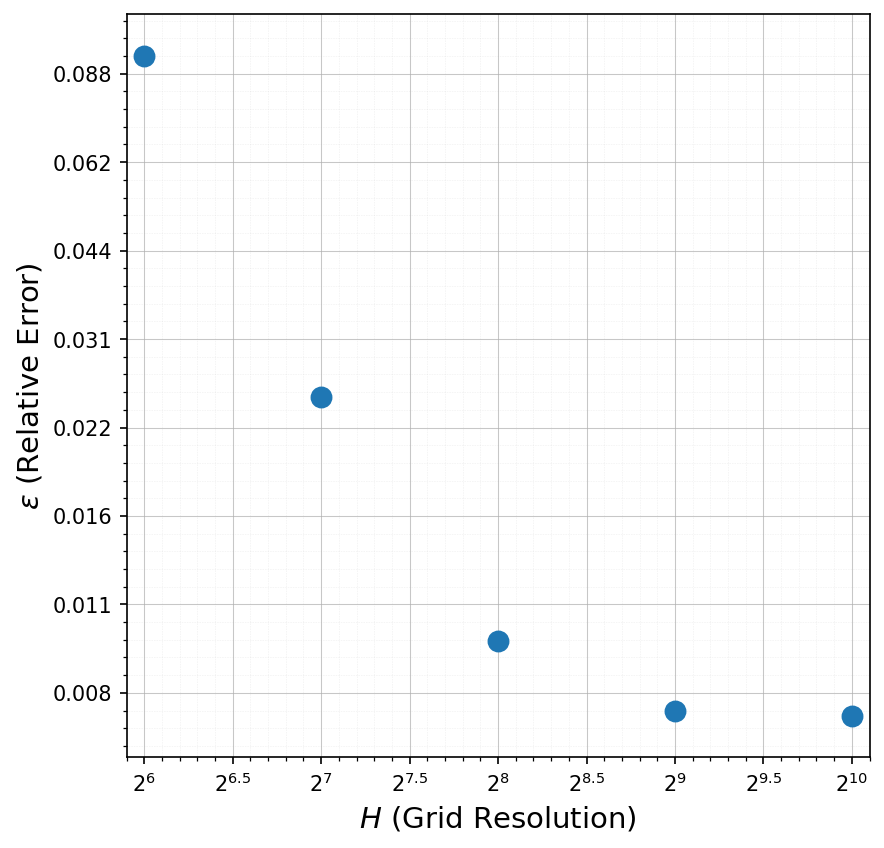}
        \caption{Comparison between mesh-based algorithm and our method.}
        \label{fig:mesh2alg}
    \end{subfigure}

    \caption{Numerical convergence of our method, and comparison with a second-order finite-volume scheme.}
    \label{fig:resolution_study}
\end{figure}

In addition, we compare the numerical results of the second-order finite-volume scheme described in\cite{chertock2008second} (with the maximum resolution \(H_{\text{finite-volume}}=2^9\) that remains numerically stable; finer grids trigger instability due to CFL condition violation) to the results of our algorithm at resolutions of \(H=2^6\) to \(H=2^{10}\). As shown in Figure~\ref{fig:mesh2alg}, the relative difference between our algorithm and the second-order finite-volume scheme is within 1\%. This close agreement demonstrates the high accuracy of our method relative to a well-established reference scheme.

\subsection{Three-Dimensional Simulation}
\label{subsec:3D}
We extend the benchmark example to a three-dimensional case ($d=3$). Unlike previous studies that focused primarily on 2D configurations, the numerical computation for the 3D problem has not been addressed in earlier work.

For the periodic domain \(\Omega = [0,6]^3\), the initial conditions are specified as
\begin{equation}
\begin{aligned}
&u(\mathbf{x},0)=3 \cdot \mathbf{1}_{|\mathbf{x} - \mathbf{x}_0|\leq r_0}, \quad m(\mathbf{x},0)=u(\mathbf{x},0),\\&v(\mathbf{x},0) = 0.05 \cos \left(\frac{5\pi x_1^2}{18}\right) \sin \left(\frac{13\pi x_2^2}{72}\right)\cos \left(\frac{\pi x_3^2}{12}\right) + 0.3, \quad w(\mathbf{x},0) = 4v(\mathbf{x},0),
\end{aligned}
\end{equation}
where the center of mass \(\mathbf{x}_0=(3,3,3)\), initial radius \(r_0 = \sqrt{0.3}\), physical domain $\Omega = [0, L]^3\quad(L=6.0)$. The parameter selection is the same as in the 2D case: \(\chi = 0.4,D_u = D_m = D_w = 0.01, \alpha = 5,\beta = 0.01,\gamma = 5\). The final time is set as $T = 6.0$, and the timestep $\tau = 10^{-3}$ (total steps $N_T=6000$). The particle count for our algorithm is set as \(P = 2^{22}\). When plotting the density map, the weights carried by the particles are mapped onto a $32\times 32 \times 32$ spatial grid using the Particle-to-Grid transfer (Alg. \ref{alg:p2g}).

In the 3D numerical example, a rapid spread of cells driven by haptotactic flux emerges, similar to the behavior observed in the 2D case. Figure~\ref{fig:3d_1} shows the evolution of the cell population $u(\mathbf{x},t)$ on the cross-section at $x_3=\frac{L}{2}$. Figure~\ref{fig:3d_2} shows 3D plots of the evolution of $u(\mathbf{x},t)$ on three mutually perpendicular cross-sections at $x_1=\frac{L}{2}$, $x_2=\frac{L}{2}$, and $x_3=\frac{L}{2}$.

\begin{figure}[!ht]
    \centering
    
    \begin{subfigure}[b]{0.32\textwidth}
        \centering
        \includegraphics[width=\textwidth]{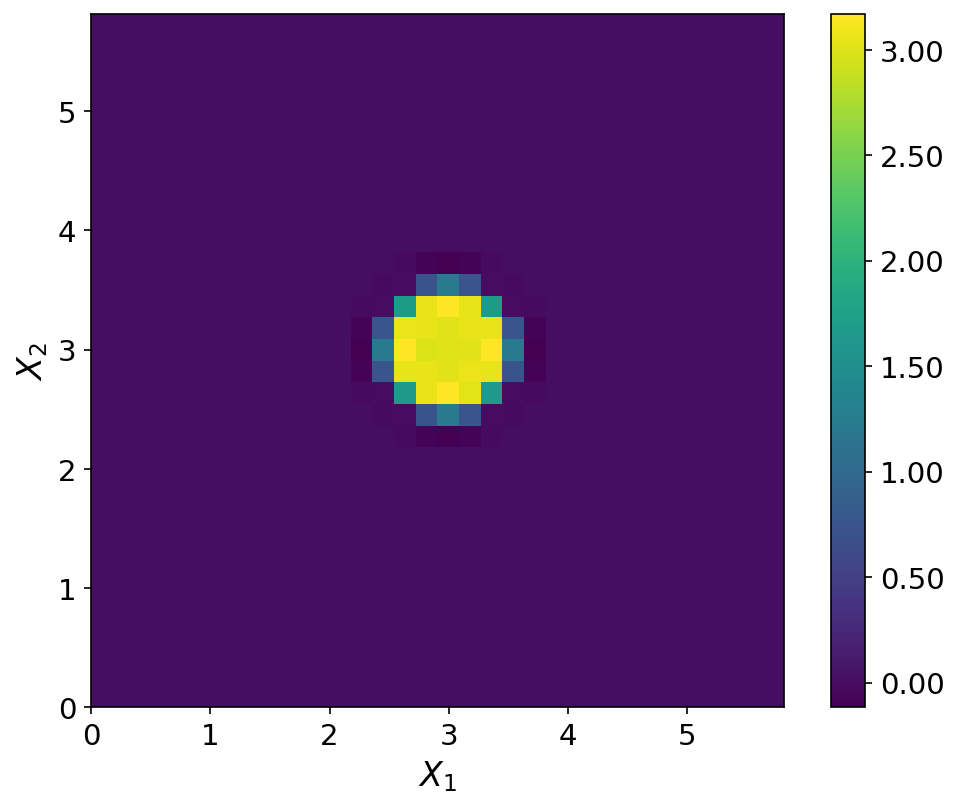}
        \caption{$t = 0.0$}
        \label{fig:u3_t00}
    \end{subfigure}
    \hfill
    \begin{subfigure}[b]{0.32\textwidth}
        \centering
        \includegraphics[width=\textwidth]{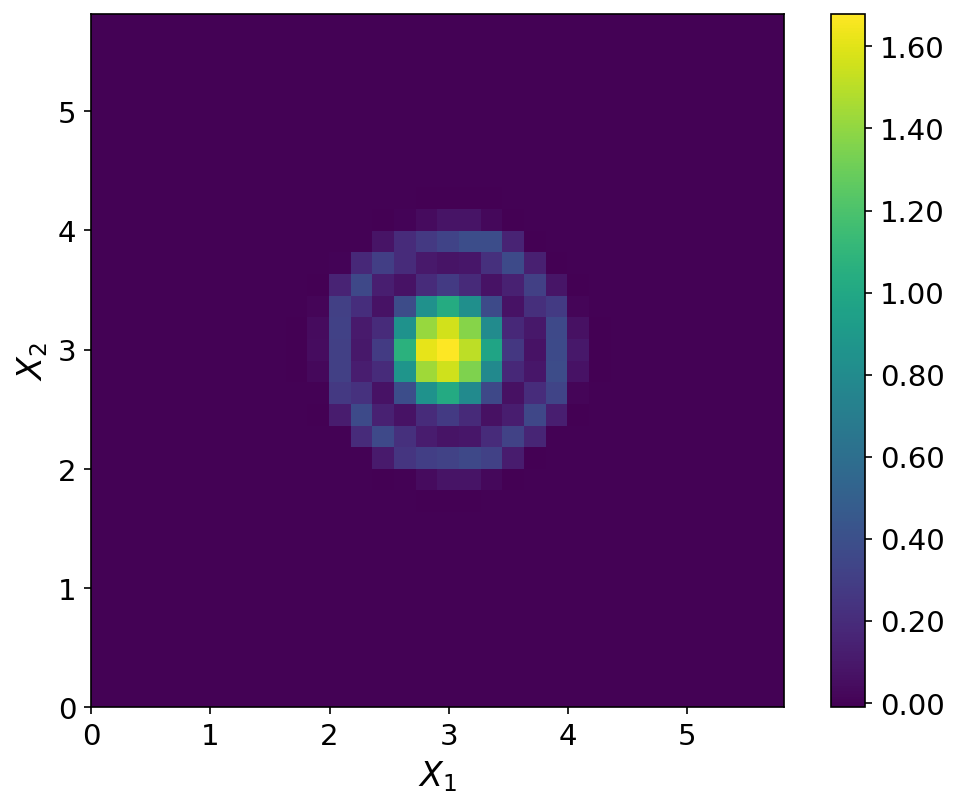}
        \caption{$t = 1.2$}
        \label{fig:u3_t12}
    \end{subfigure}
    \hfill
    \begin{subfigure}[b]{0.32\textwidth}
        \centering
        \includegraphics[width=\textwidth]{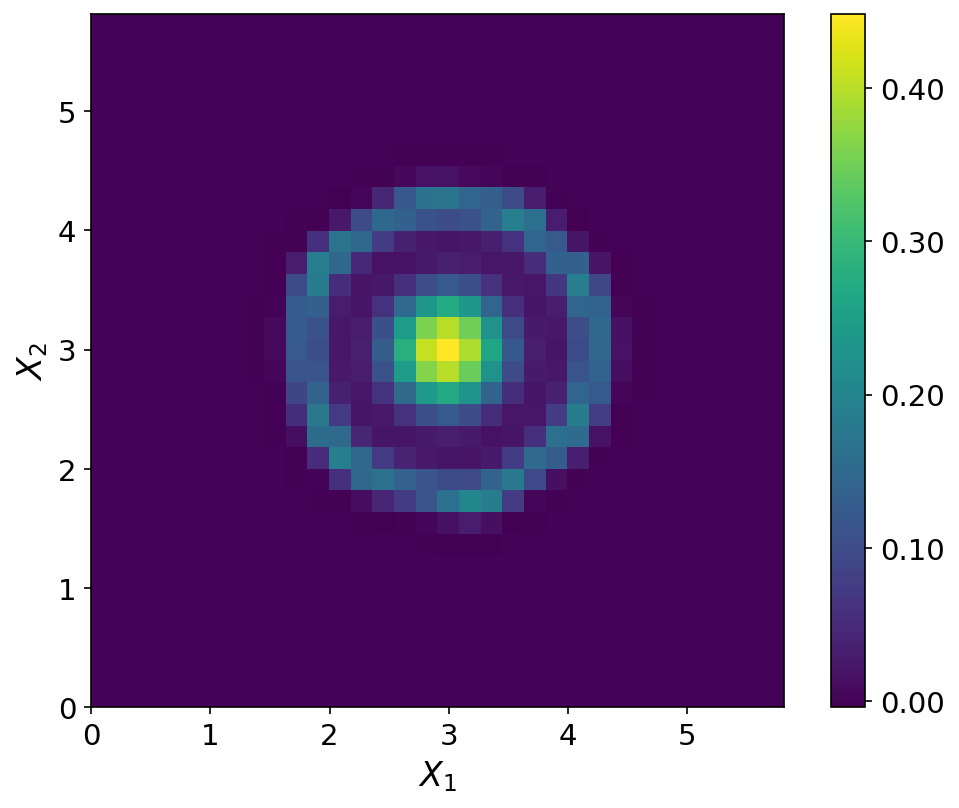}
        \caption{$t = 2.4$}
        \label{fig:u3_t24}
    \end{subfigure}
    
    \vspace{0.5cm}
    
    \begin{subfigure}[b]{0.32\textwidth}
        \centering
        \includegraphics[width=\textwidth]{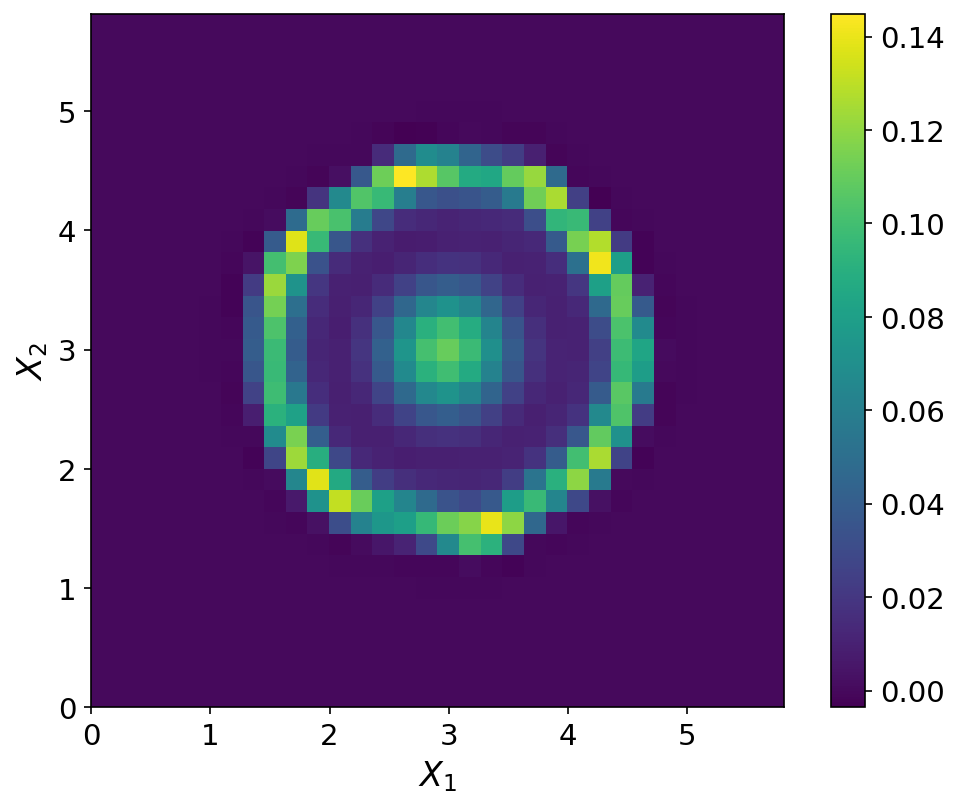}
        \caption{$t = 3.6$}
        \label{fig:u3_t36}
    \end{subfigure}
    \hfill
    \begin{subfigure}[b]{0.32\textwidth}
        \centering
        \includegraphics[width=\textwidth]{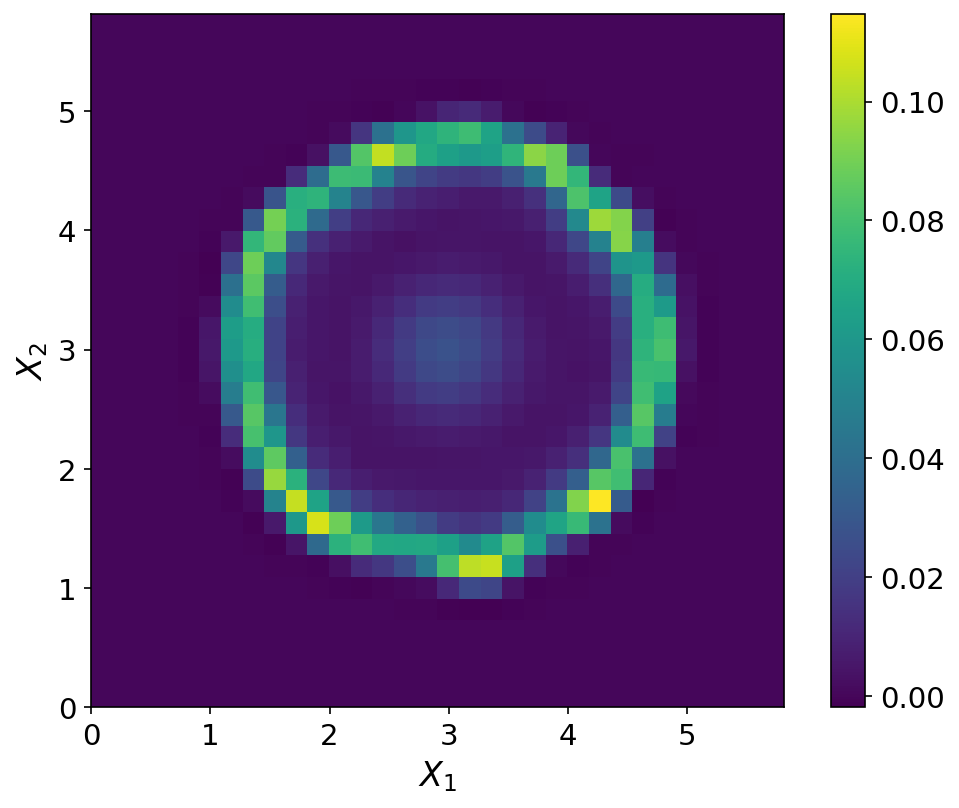}
        \caption{$t = 4.8$}
        \label{fig:u3_t48}
    \end{subfigure}
    \hfill
    \begin{subfigure}[b]{0.32\textwidth}
        \centering
        \includegraphics[width=\textwidth]{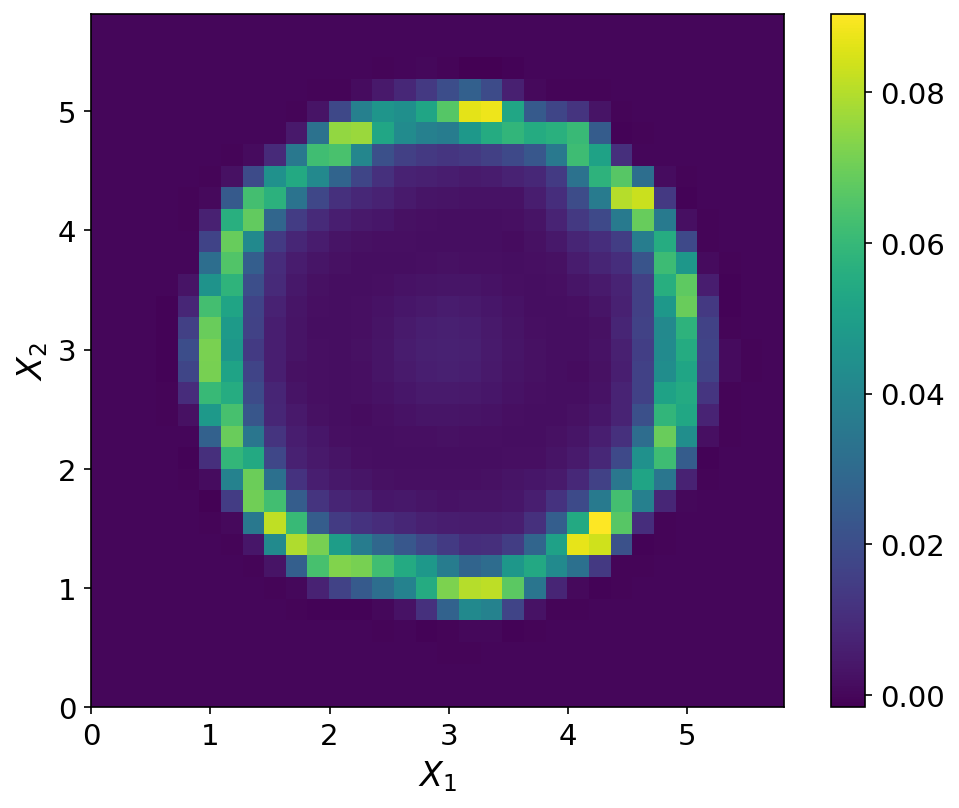}
        \caption{$t = 6.0$}
        \label{fig:u3_t60}
    \end{subfigure}
    
    \caption{Evolution of the cell density $u$ and the concentrations $v, m, w$ on the cross-section at $x_3 = L/2$.}
    \label{fig:3d_1}
\end{figure}

\begin{figure}[!ht]
    \centering
    
    \begin{subfigure}[b]{0.32\textwidth}
        \centering
        \includegraphics[width=\textwidth]{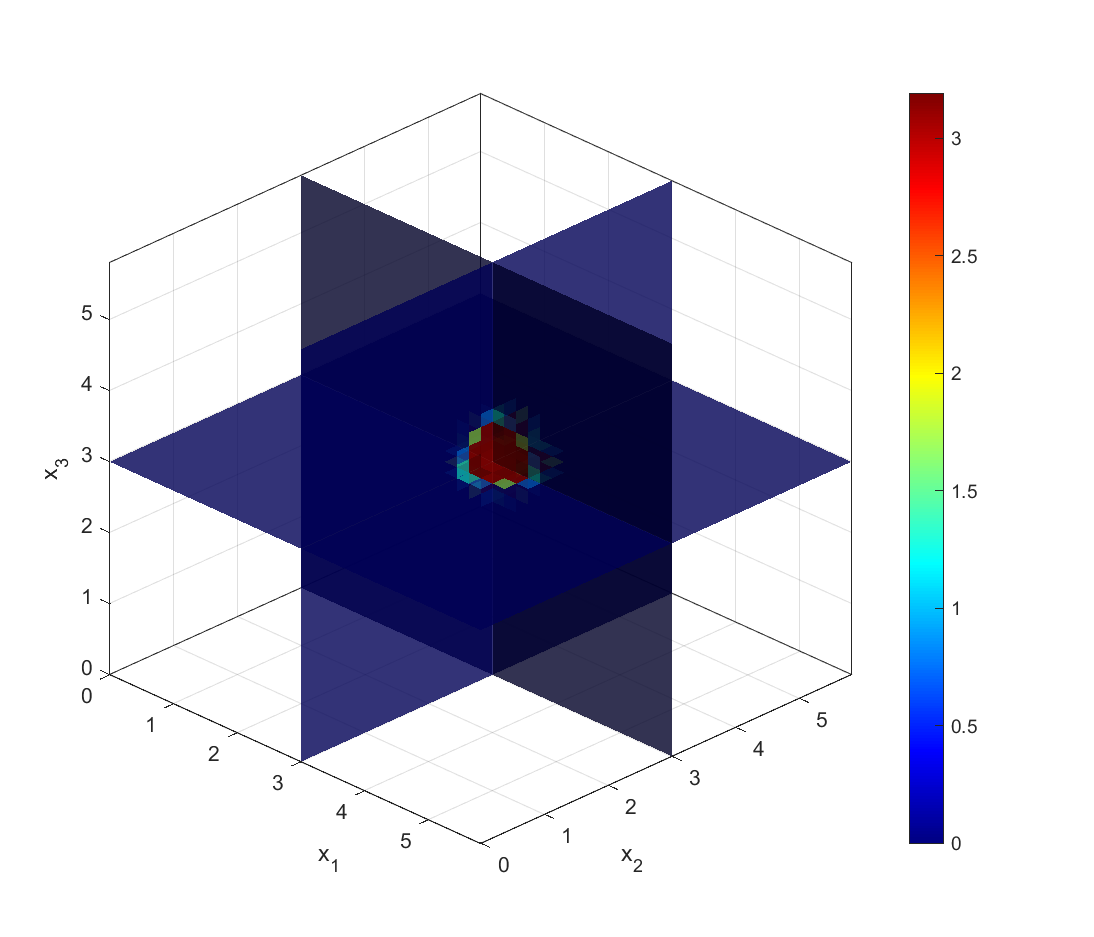}
        \caption{$t = 0.0$}
        \label{fig:u32_t00}
    \end{subfigure}
    \hfill
    \begin{subfigure}[b]{0.32\textwidth}
        \centering
        \includegraphics[width=\textwidth]{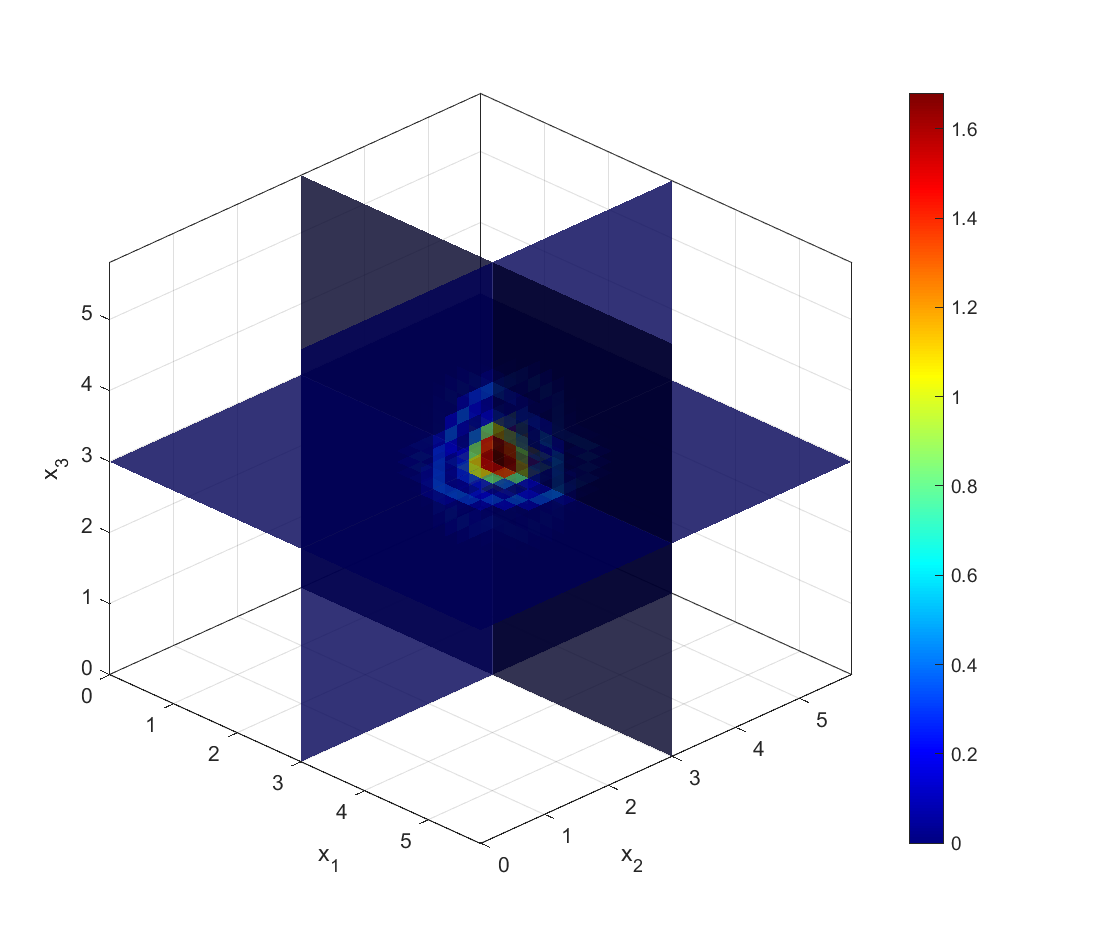}
        \caption{$t = 1.2$}
        \label{fig:u32_t12}
    \end{subfigure}
    \hfill
    \begin{subfigure}[b]{0.32\textwidth}
        \centering
        \includegraphics[width=\textwidth]{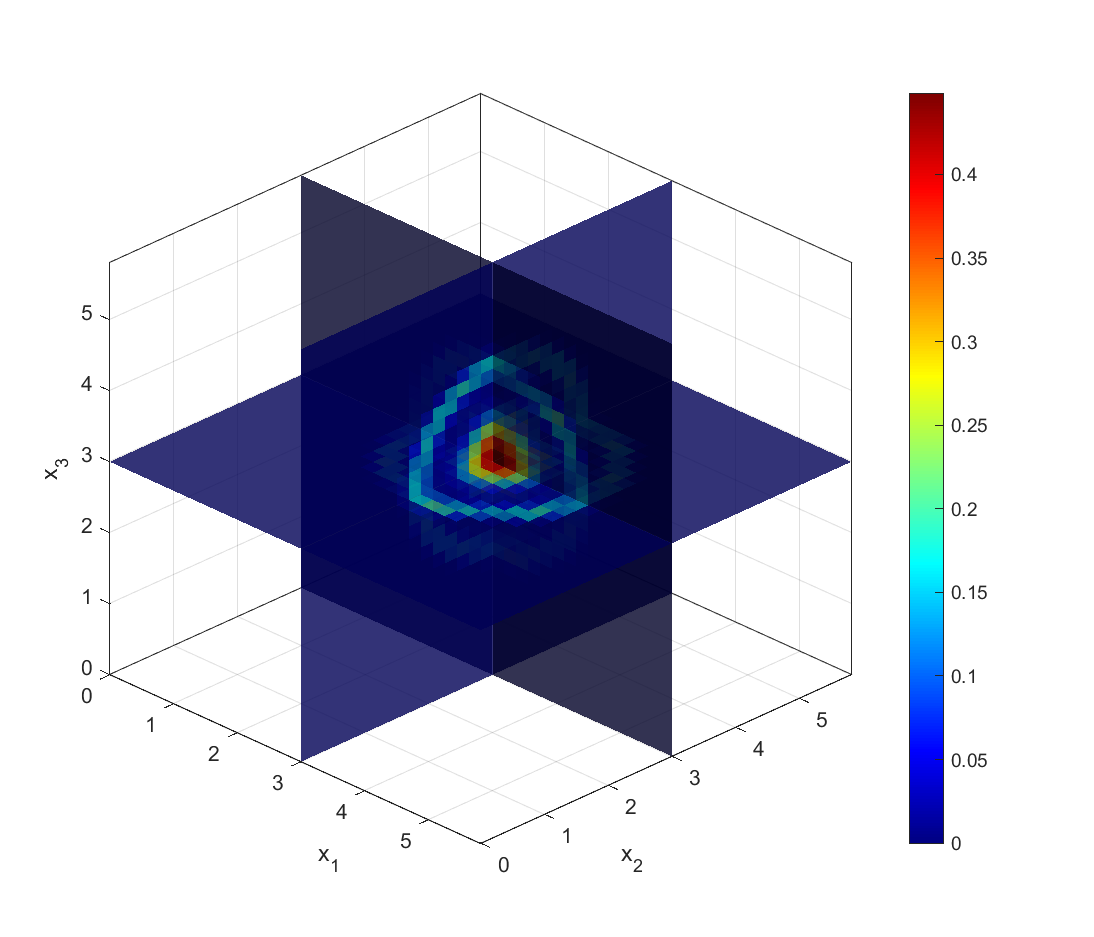}
        \caption{$t = 2.4$}
        \label{fig:u32_t24}
    \end{subfigure}
    
    \vspace{0.5cm}
    
    \begin{subfigure}[b]{0.32\textwidth}
        \centering
        \includegraphics[width=\textwidth]{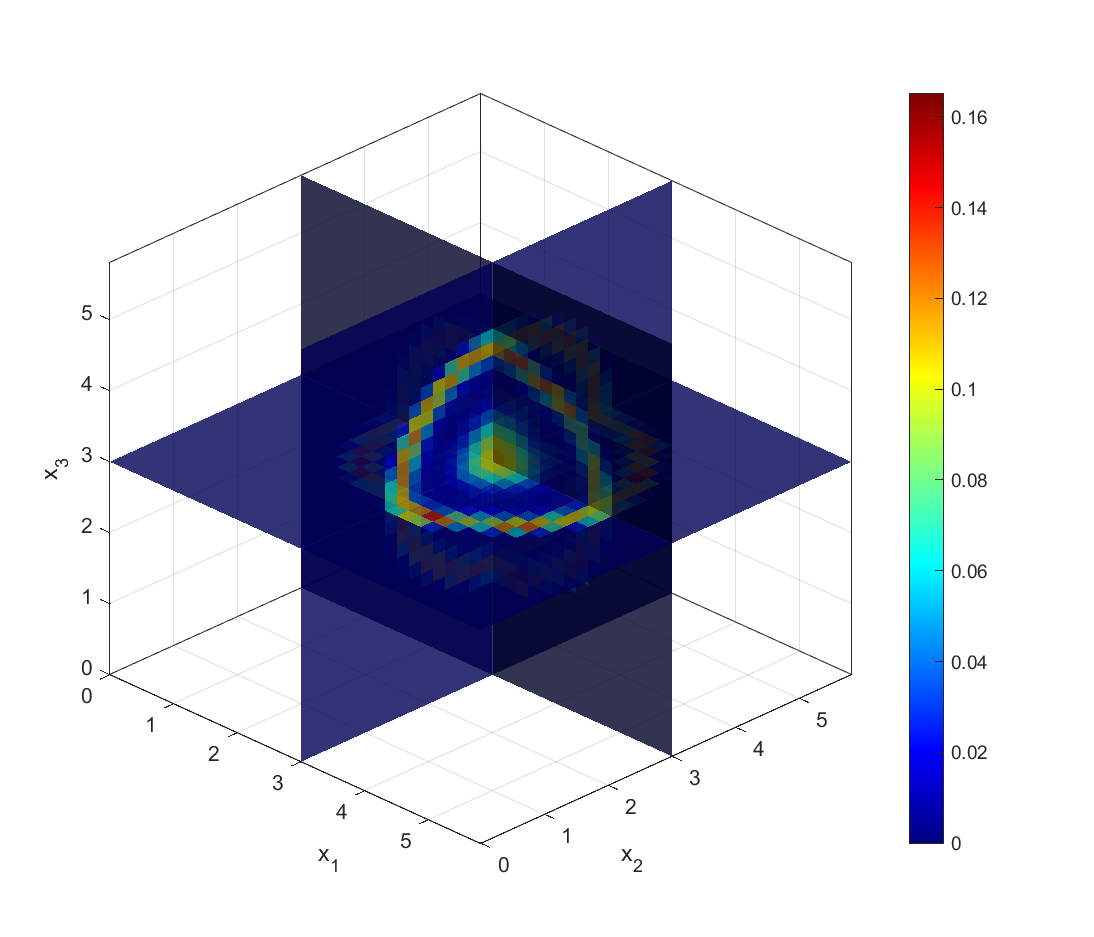}
        \caption{$t = 3.6$}
        \label{fig:u32_t36}
    \end{subfigure}
    \hfill
    \begin{subfigure}[b]{0.32\textwidth}
        \centering
        \includegraphics[width=\textwidth]{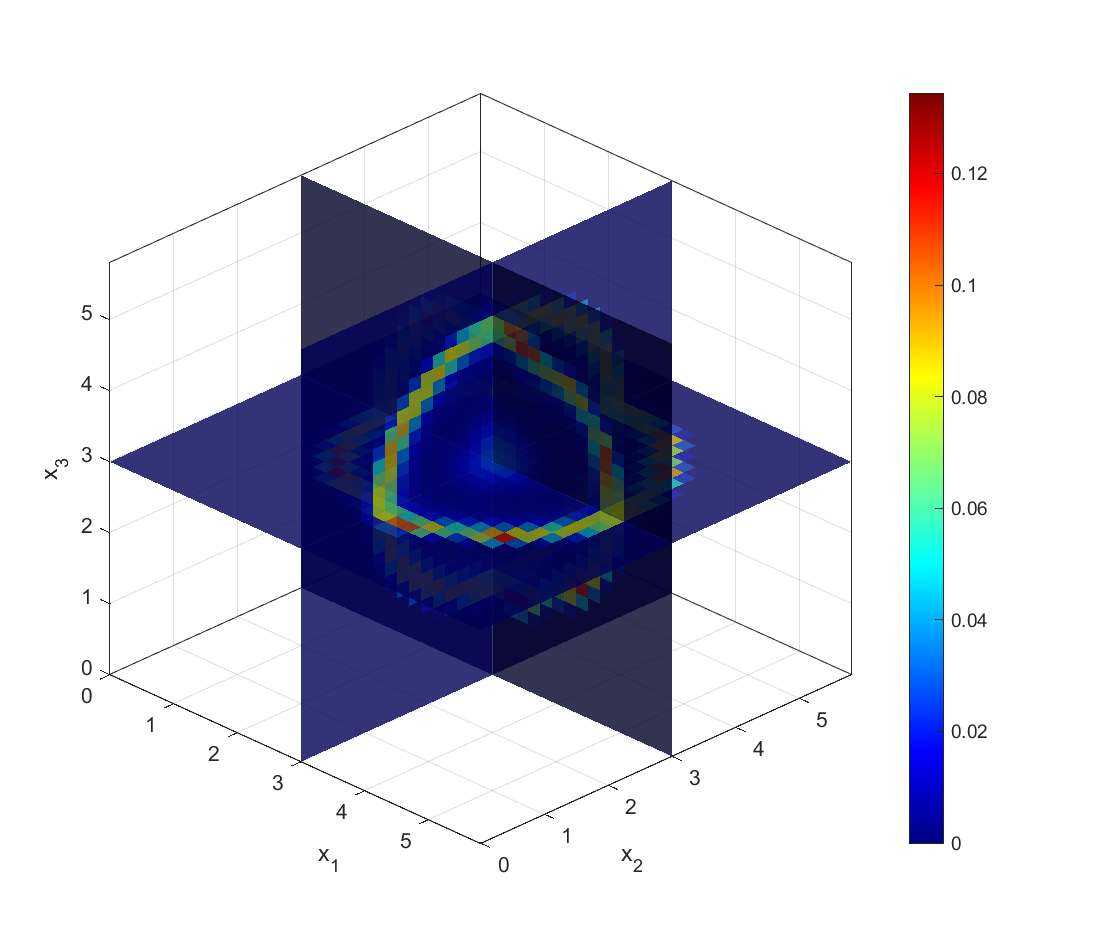}
        \caption{$t = 4.8$}
        \label{fig:u32_t48}
    \end{subfigure}
    \hfill
    \begin{subfigure}[b]{0.32\textwidth}
        \centering
        \includegraphics[width=\textwidth]{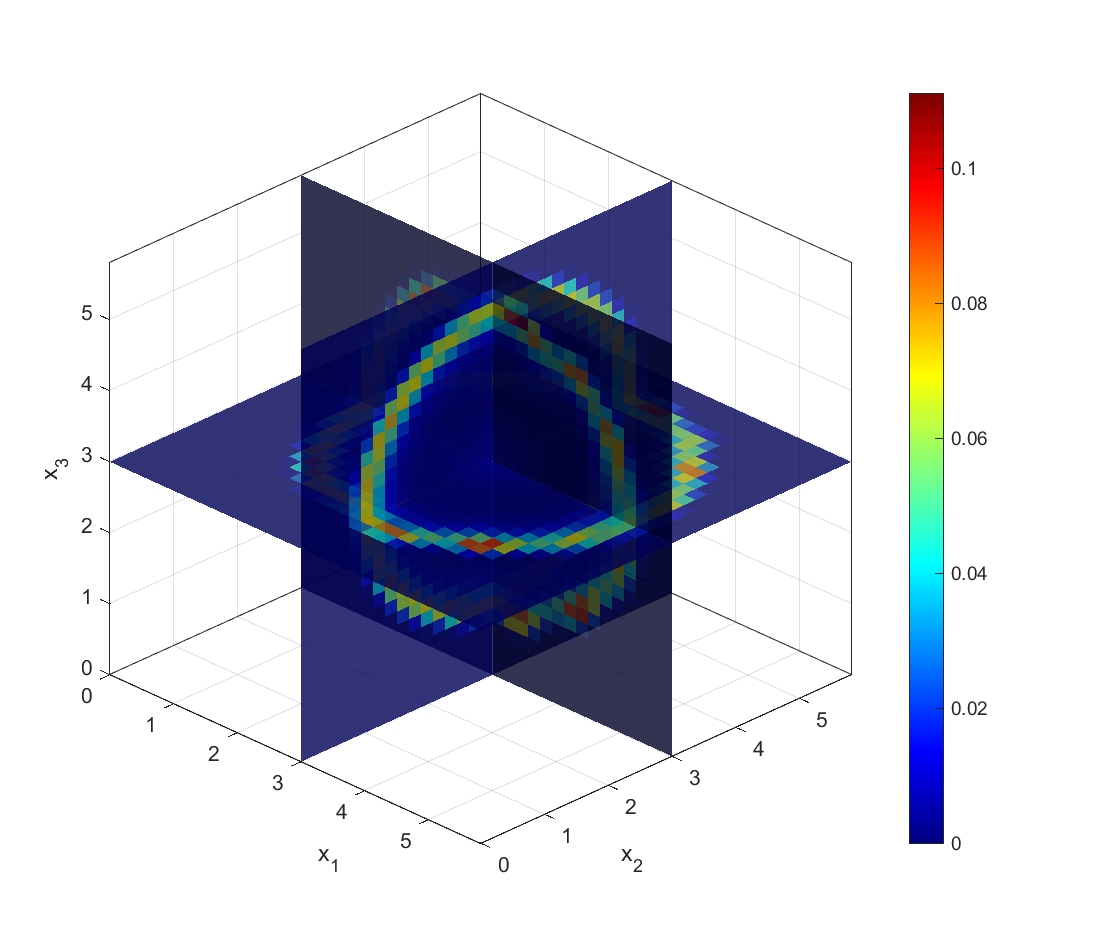}
        \caption{$t = 6.0$}
        \label{fig:u32_t60}
    \end{subfigure}
    
    \caption{Three-dimensional distribution of the cell density $u$ on mutually perpendicular cross-sections at $x_1 = L/2$, $x_2 = L/2$, and $x_3 = L/2$.}
    \label{fig:3d_2}
\end{figure}

It is noteworthy that this work presents the first systematic numerical computation of the three-dimensional space for \eqref{eq:haptotaxis-system}, and consequently, no alternative algorithms are available in the literature for comparative benchmarking. Nevertheless, the convergence of the solution as $H \to \infty$ in our algorithm can be validated by comparing the results obtained at the highest resolution \(H=2^8\) with other resolutions \(H=2^4,...,2^7\). As shown in Figure \ref{fig:3d_3}, the spatial convergence order is validated through the slope of $\log_2 \mathcal{E}$ versus $\log_2 H$, with a measured value of $-1.407$, which closely matches the theoretical order of $16/13$ (Theorem \ref{thm:overall}).
\begin{figure}
    \centering
    \includegraphics[width=0.45\linewidth]{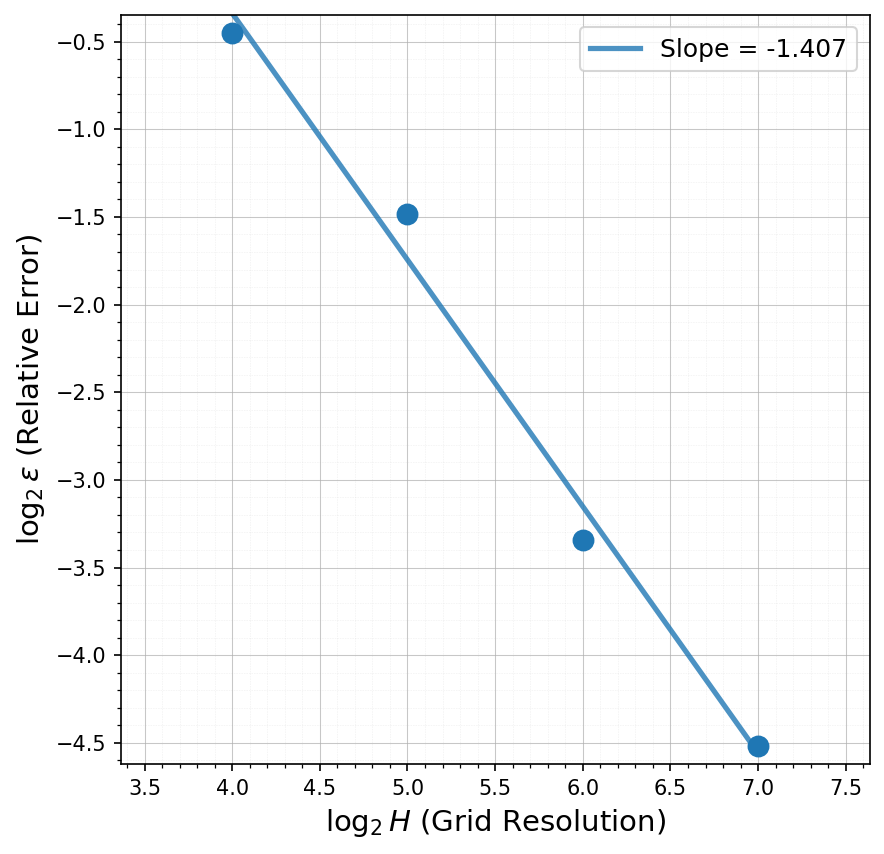}
    \caption{Relative error convergence against grid resolution ($H$).}
    \label{fig:3d_3}
\end{figure}

\section{Conclusions}
\label{sec:conclusion}
In this work, we have developed a novel numerical framework for solving a biological reaction-diffusion-advection system in three spatial dimensions using particles of variable mass. The cell density was represented through empirical particle measures, while the concentrations of several associated chemical species were constructed and dynamically updated on the spatial grid. Efficient particle-grid interaction was realized via Particle-in-Cell (PIC) interpolations, and spatial diffusion was handled with a spectral solver, ensuring both computational efficiency and high precision.

This new method achieves boundedness of the particle mass rate of change over finite time intervals, inherent non-negativity of cell density through the empirical measure representation, and unconditional positivity preservation of all concentration fields on the spatial grid. A rigorous error analysis confirms the theoretical convergence rates, which are further validated by numerical experiments.

Our method successfully captured the rapid spread of cells driven by haptotactic flux in a three-dimensional setting, consistent with the patterns reported in two-dimensional cases. Future work will explore extensions to multi-species cell populations, and additional biophysical mechanisms such as cell-cell adhesion and mechano-chemical feedback. The spectral treatment of diffusion and the positivity-preserving properties of the scheme also suggest potential applications beyond the biological context considered here, offering a versatile platform for a broader class of reaction-diffusion-advection problems in three dimensions.

\section{Acknowledgements}
\noindent ZZ were supported by the National Natural Science Foundation of China (Projects 92470103), the Hong Kong RGC grant (Projects 17304324 and 17300325), the Seed Funding Programme for Basic Research (HKU), and the Hong Kong RGC Research Fellow Scheme 2025. ZW was partially supported by NTU SUG-023162-00001, MOE AcRF Tier 1 Grant RG17/24. JX was partially supported by NSF grants DMS-2219904 and DMS-2309520. The computations were performed at the research computing facilities provided by the Information Technology Services, the University of Hong Kong, and the Greenplanet Cluster at UC Irvine.

\end{document}